\documentclass[12 pt,reqno]{amsart}
\usepackage{amsmath}
\usepackage{amsfonts}
\usepackage{amssymb}
\usepackage{amsthm}
\usepackage{amscd}
\usepackage{a4wide}
\usepackage{enumerate}
\usepackage{pifont}
\usepackage{dsfont} % i added for blackboard bold 1's
\usepackage{xcolor}
\usepackage{amsmath}
\usepackage{tikz}
\usepackage{tikz-cd}
\usepackage{booktabs}
\usetikzlibrary{shapes.geometric}
\usepackage{amscd}
\usepackage{mathabx}
\numberwithin{equation}{section}
\usepackage[original]{imakeidx}
\makeindex % will get the same name as the main file
\usepackage{mathrsfs}
\usepackage{graphics}
\usepackage{eucal}
\usepackage{mathrsfs}
\usepackage{mdwlist}
\usepackage{color}
\usepackage[pdftex,bookmarks,colorlinks,breaklinks]{hyperref}  % PDF hyperlinks, with coloured links
\usepackage{mdwlist}
\usepackage{environ}
\hypersetup{linkcolor=red,citecolor=blue,filecolor=dullmagenta,urlcolor=darkblue} 
\usepackage{enumitem}
\usepackage{multicol}
\setenumerate[1]{label=\thesection.\arabic*.}
\newif\ifhinting\hintingtrue % hints are on by defautl% coloured links
\usepackage{fancyhdr}
\fancypagestyle{plain}{ %
  \fancyhf{} % remove everything
}
\newtheorem {theorem}    {Theorem}[section]
\newtheorem {lemma}      [theorem]    {Lemma}

\newtheorem {corollary}  [theorem]    {Corollary}
\newtheorem {proposition}[theorem]    {Proposition}

\theoremstyle{definition}
\newtheorem {definition} [theorem]    {Definition}

\newtheorem {remark}    [theorem]    {Remark}

\newcounter{AbcT}

\numberwithin{equation}{section}

\newcommand {\N} {{\mathbb N}}
\newcommand {\R} {{\mathbb R}}

\newcommand{\Q}{{\mathbb Q}}

\newcommand {\T} {{\mathbb T}}
\newcommand {\Z} {{\mathbb Z}}

\newcommand{\e}{\varepsilon}

\newcommand{\IGNORE}[1]{}
\newcommand{\tmX}{{\tilde{\mathcal{X}}}}

\newcommand{\cB}{\mathbb{B}}

\DeclareMathOperator{\supp}{supp}

\DeclareMathOperator{\Span}{Span}

 \DeclareMathOperator{\SL}{SL}
\DeclareMathOperator{\Y}{\mathcal{Y}}

\newcommand{\diag}{\operatorname{diag}}
\newcommand\Lie{\operatorname{Lie}}

\newcommand{\bbR}{\mathbb{R}}

\newcommand{\ccG}{\mathcal{G}}

\newcommand{\aaa}{c}

\newcommand{\ind}{\protect\raisebox{2pt}{$\chi$}}

\newcommand{\cum}{\operatorname{Cum}}
\newcommand{\Pp}{\mathcal{P}}
\newcommand{\X}{\mathcal{X}}
\newcommand{\Mat}{\mathrm{M}_{m \times n}(\bbR)}
\newcommand{\Matt}{\mathrm{M}_{m \times n}([0,1])}

\newcommand{\Liou}{\text{Liou}}
\newcommand{\Lip}{\text{Lip}}

\begin{document}
\title{Effective multi-equidistribution for translates of unipotent flows and Central limit theorems in inhomogeneous Diophantine approximation}

\author{Gaurav Aggarwal}
\address{\textbf{Gaurav Aggarwal} \\
Institut f\"ur Mathematik, Universit\"at Z\"urich, 8057 Z\"urich, Switzerland}
\email{gaurav.aggarwal@math.uzh.ch}

\author{Sourav Das}
\address{\textbf{Sourav Das} \\
School of Mathematics,
Tata Institute of Fundamental Research, Mumbai, India 400005}
\email{iamsouravdas1@gmail.com, sourav@math.tifr.res.in}

\author{Anish Ghosh}
\address{\textbf{Anish Ghosh} \\
School of Mathematics,
Tata Institute of Fundamental Research, Mumbai, India 400005}
\email{ghosh@math.tifr.res.in}

\thanks{ G.~A. gratefully acknowledges support from the Swiss National Science Foundation (grant 200020-212617). All three authors acknowledge support from the Department of Atomic Energy, Government of India (project 12-R\&D-TFR-5.01-0500). A.\ G.\ gratefully acknowledges support from a grant from the Infosys foundation, and a J. C. Bose grant of the ANRF}

\begin{abstract}
In this paper, we prove a central limit theorem for inhomogeneous Diophantine approximation with a fixed shift, provided the shift is non-Liouville. This generalizes earlier work of Dolgopyat, Fayad, and Vinogradov~\cite{DFV}.
This is achieved by translating the problem to one involving flows on homogeneous spaces. In this latter setting, we establish an effective multi-equidistribution result for diagonal translates of unipotent flows. This result is obtained by combining a recent result of Kim~\cite{Kim2024} with the height function construction of Shi~\cite{Shi20}. The central limit theorem is then deduced using the method of Bj\"orklund and Gorodnik~\cite{BG}.
\end{abstract}

\subjclass[2020]{Primary: 11K60; Secondary: 60F05, 37A17}
\keywords{Diophantine approximation, Central limit theorems, flows on homogeneous spaces}

%\subjclass[2020]{11J61, 11J83, 11K60, 37D40, 37A17, 22E40} \keywords{
 % Dynamical systems, Homogeneous flows}

\maketitle

\tableofcontents

\section{Introduction}\label{sec:intro}
This paper lies at the intersection of Diophantine approximation, homogeneous dynamics, and probability theory. On the dynamical side, we study diagonal translates of unipotent flows on the space of affine unimodular lattices and establish an \emph{effective multi-equidistribution} result (Theorem~\ref{thm: main thm equidistribution}). On the Diophantine and probabilistic side, we use this result to deduce a central limit theorem for Diophantine approximates (Theorem~\ref{thm: main thm}).

\subsection{Diophantine approximation}
Diophantine approximation studies how well real numbers can be approximated by rationals, typically through expressions of the form $|q\theta - p|$. Equivalently, this concerns the distribution of $q\theta \,\,(\bmod\, 1)$ near zero. A natural refinement is to study approximation near an arbitrary fixed shift, that is, how close $q\theta \,\,(\bmod \, 1)$ can get to a fixed point $\xi$, that is, expressions of the form $|q\theta - p - \xi|$. The study of such objects goes by the name of \emph{inhomogeneous} Diophantine approximation and is concerned, more generally, with systems of affine forms. We will study such forms with two additional complications. Firstly, we will consider forms where the `shift', namely $\xi$  is fixed. Since this naturally involves averaging over a smaller family, it is a more difficult problem. Secondly, we will adorn the relevant Diophantine inequalities with \emph{weights}, which significantly complicates the problem compared to the equal-weight case due to unequal approximation rates across different coordinates. We dive into specifics without further ado. For more details on weighted and inhomogeneous Diophantine approximation, the reader is referred to \cite{AG24singular,aggarwal2025,CGGMS}  and the references therein.\\

Fix $m,n \ge 1$. To each pair $(\theta,\xi)\in \mathrm{M}_{m\times n}(\mathbb R)\times\mathbb R^m$, associate the system of affine linear forms
\[
L^{(i)}_{\theta,\xi}(y)
=
\sum_{j=1}^n \theta_{ij} y_j + \xi_i,
\qquad y=(y_1,\dots,y_n)\in\mathbb R^n,\quad \theta=(\theta_{ij}),\quad i=1,\dots,m.
\]
The central question is how well $\theta \cdot \mathbf q \,\, (\bmod \,1)$ approximates $\xi$ in $\mathbb R^m$, or equivalently, to find $(\mathbf p,\mathbf q)\in \mathbb Z^m\times(\mathbb Z^n\setminus\{0\})$ such that the quantities $L^{(i)}_{\theta,\xi}(\mathbf q)+p_i$ are simultaneously small for all $i$. This is typically expressed by requiring $|L^{(i)}_{\theta,\xi}(\mathbf q)+p_i|$ to be uniformly small across all coordinates.

A more refined question is to seek different approximation rates in different directions, leading to the weighted theory. To make this precise, fix the sup norm $\|\cdot\|$ on $\mathbb R^n$, parameters $\vartheta_1,\dots,\vartheta_m>0$, and weights $w_1,\dots,w_m>0$ satisfying $w_1+\cdots+w_m=n$. The problem is then to study integer solutions $(\mathbf p,\mathbf q)$ of the inequalities
\begin{equation}\label{equ:Dio}
\left|L^{(i)}_{\theta,\xi}(\mathbf q)+p_i\right|
\le
\frac{\vartheta_i}{\|\mathbf q\|^{w_i}},
\qquad i=1,\dots,m,
\end{equation}
where $(\mathbf p,\mathbf q)\in \mathbb Z^m\times(\mathbb Z^n\setminus\{0\})$.

A classical theorem of Schmidt \cite[Prop. 3]{Sch}, extending Khintchine's theorem to the inhomogeneous setting with fixed shift, implies that for any fixed $\xi\in\mathbb R^m$ the system \eqref{equ:Dio} admits infinitely many solutions $(\mathbf p,\mathbf q)$ for Lebesgue almost every $\theta\in\Mat$. This naturally leads to the problem of understanding the asymptotic distribution of such solutions as $\|\mathbf q\|$ grows.

To study this question, we introduce the counting function
\begin{equation}\label{equ:count_func}
\Delta_T^\xi(\theta)
:=
\#\Bigl\{(\mathbf p,\mathbf q)\in
\mathbb Z^m\times(\mathbb Z^n\setminus\{0\}) :
1\le\|\mathbf q\|\le T
\ \text{and}\ \eqref{equ:Dio}\ \text{holds}
\Bigr\}.
\end{equation}

Schmidt \cite[Prop.~3]{Sch} proved for any shift $\xi$ and for Lebesgue almost every
$\theta\in M_{m\times n}([0,1])$,
\[
\Delta_T^\xi(\theta)
=
C_{m,n}\log T
+
O_{\theta,\varepsilon}\big((\log T)^{1/2+\varepsilon}\big)
\quad \text{for all }\varepsilon>0,
\]
where $C_{m,n}$ is a constant depending only on $m$, $n$ and the chosen norm on $\R^n$.\\

Schmidt's result can be viewed as an analogue of the law of large numbers for $\Delta_T^\xi(\theta)$. Hence, it is natural to seek deeper probabilistic information, for instance, about the fluctuations around the mean of $\Delta_T^\xi(\theta)$. A central limit theorem describing these fluctuations was first established by Dolgopyat, Fayad and Vinogradov in \cite{DFV}, where the authors considered the case of equal weights. They proved the result in the homogeneous case $\xi=0$, and also for generic, i.e., Lebesgue almost every $\xi$. Subsequently, Bj\"{o}rklund and Gorodnik \cite{BG} extended the central limit theorem to unequal weights in the homogeneous case $\xi=0$. More recently, the first and third named authors \cite{AG23} established a central limit theorem in the unequal weight setting for rational shifts $\xi \in \Q^m$, as well as in the setting where the shift $\xi$ also varies. The theme of central limit theorems in Diophantine approximation has an older vintage, beginning with works of Philipp \cite{Philipp}, Samur \cite{Samur}, and others, and has seen considerable activity of late. For more recent results, we refer the reader to \cite{BG17,BG23,BG2,Das,AG25,AGh25,BFG26}.\\

The purpose of the present paper is to extend the fixed shift result  \cite[Theorem 1.4]{DFV} to a much larger and \emph{explicit} class of inhomogeneous shifts.

  Recall that a vector $\xi\in\mathbb R^\ell$ is called a \emph{Liouville vector} if for every $E>0$ there exists infinitely many $(\mathbf p,q)\in\mathbb Z^\ell\times\mathbb N$ such that
\[
\left\|\xi-\frac{\mathbf p}{q}\right\|
<
\frac{1}{q^{E}},
\]
where $\|\cdot\|$ denotes the sup norm on $\mathbb R^\ell$. We denote by $\mathrm{Liou}(\mathbb R^\ell)$ the set of all Liouville vectors in $\mathbb R^\ell$. In particular, we prove the following.

\begin{theorem}\label{thm: main thm}
Let $m\ge2$. Suppose $\xi\notin\mathrm{Liou}(\mathbb R^m)$, i.e.,\ $\xi$ be a non-Liouville vector. Then for any $\eta\in\mathbb R$,
\[
\left|
\left\{
\theta\in  \Matt:
\frac{\Delta_T^\xi(\theta)-C_{m,n}\log T}{(\log T)^{1/2}}
<
\eta
\right\}
\right|
\longrightarrow
\mathcal N_{\sigma_{m,n}}(\eta)
\quad\text{as }T\to\infty,
\]
where $|\cdot|$ denotes Lebesgue measure on $ \Matt$,
\[
\mathcal N_\sigma(\eta)
:=
\frac{1}{\sqrt{2\pi\sigma}}
\int_{-\infty}^{\eta} e^{-x^2/(2\sigma)}\,dx
\]
is the normal distribution with variance $\sigma$, and
\[
C_{m,n}
=
\sigma_{m,n}^2
=
2^m \vartheta_1\cdots\vartheta_m\,\omega_n,
\qquad
\omega_n:=\int_{\mathbb S^{n-1}} d\bar z .
\]
\end{theorem}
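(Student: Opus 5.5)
We follow the Dani correspondence combined with the Björklund--Gorodnik cumulant method. Let $d=m+n$, $G=\SL_d(\bbR)\ltimes\bbR^d$ and $\Gamma=\SL_d(\Z)\ltimes\Z^d$, so that $\X=G/\Gamma$ is the space of affine unimodular lattices. For $\theta\in\Matt$ we form the point
\[
x_{\theta,\xi}=\Bigl(u_\theta,\begin{pmatrix}\xi\\0\end{pmatrix}\Bigr)\Z^d,
\qquad u_\theta=\begin{pmatrix}I_m&\theta\\0&I_n\end{pmatrix},
\]
and use the weighted diagonal flow $a_t=\diag(e^{w_1t},\dots,e^{w_mt},e^{-t},\dots,e^{-t})$. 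The counting region for \eqref{equ:Dio} with $e^{s}\le\|\mathbf q\|< e^{s+1}$ is a fixed set $\Omega\subset\bbR^d$ after applying $a_s$. Hence, up to boundary terms,
\[
\Delta_T^\xi(\theta)=\sum_{s=0}^{N-1}\hat\chi(a_s x_{\theta,\xi}),\qquad N\approx\log T,
\]
where $\hat\chi$ denotes the Siegel transform on $\X$ of the indicator of $\Omega$. We first replace $\chi_\Omega$ by smooth compactly supported approximants $f_\e$, and truncate $\hat f_\e$ by a cutoff in a height function, as in \cite{BG}. The error terms are controlled in $L^1$ or $L^2$ using Rogers-type second moment bounds on $\X$ together with the Schmidt estimates. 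This shows that it suffices to prove a CLT for $F_N=N^{-1/2}\sum_{s<N}(\hat f(a_s x_{\theta,\xi})-\mu(\hat f))$, with $\hat f$ truncated at height $\approx N^{\kappa}$ for a small $\kappa>0$.

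The dynamical input is the effective multi-equidistribution result, Theorem~\ref{thm: main thm equidistribution}, stated later in the paper. We apply it to the fixed-shift orbits $\{a_{s_1}u_\theta\},\dots,\{a_{s_r}u_\theta\}$ lying on the horospherical fibre over the fixed affine vector $\xi$. It gives, for times that are sufficiently separated,
\[
\int_{\Matt}\prod_{i=1}^r\phi_i(a_{s_i}x_{\theta,\xi})\,d\theta=\prod_i\mu(\phi_i)+O\bigl(e^{-\delta\min_i|s_{i+1}-s_i|}\textstyle\prod\|\phi_i\|_{C^k}\bigr)+O\bigl(\text{error}(\xi,s_1)\bigr).
\]
The non-Liouville hypothesis on $\xi$ is exactly what makes the $\xi$-dependent term decay polynomially in $e^{s_1}$; in Kim's effective equidistribution, rational approximations to $\xi$ obstruct equidistribution, and the Diophantine exponent of $\xi$ controls the rate. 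The hypothesis $m\ge2$ presumably enters there, or in Shi's height function, which controls escape of mass of the truncated Siegel transforms uniformly along the orbit.

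With multi-mixing in hand, we run the cumulant argument of \cite{BG}. First we discard the initial $N^{\gamma}$ terms, whose contribution is negligible after dividing by $N^{1/2}$, so that the $\xi$-error term is $O(e^{-\delta N^{\gamma}})$. For $r\ge3$ we bound $\cum_r(F_N)$ by decomposing $\{s_1,\dots,s_r\}$ into clusters with gaps at a chosen scale. The standard combinatorial identity shows that cumulants vanish up to the mixing error whenever there is a large gap. Polynomial growth of the truncated norms in $N$ then gives $\cum_r(F_N)\to0$. The variance computation reduces to $\sum_{s\in\Z}\bigl(\mu(\hat f\cdot\hat f\circ a_s)-\mu(\hat f)^2\bigr)$. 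Using Rogers' formula, and since only $s=0$ contributes in the limit as the approximation is refined, we obtain $\sigma^2=\mu(\hat\chi)=C_{m,n}$. The mean is $N\mu(\hat\chi)+o(N^{1/2})$ by the same equidistribution, which yields $C_{m,n}\log T$.

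The main obstacle is establishing Theorem~\ref{thm: main thm equidistribution} with errors uniform in the shift, and the compatibility of the height truncation with the affine, fixed-shift fibre. The Siegel transform is unbounded, so escape-of-mass estimates along $a_s u_\theta x_\xi$ for a fixed $\xi$ must be derived from Shi's height function rather than by averaging over $\xi$. Once those estimates are available, the remaining steps follow \cite{BG} closely.
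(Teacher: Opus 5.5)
Your outline follows the paper's architecture: Birkhoff sums of $\hat\chi\circ b^s$ on the fixed-shift torus $\Y_\xi$, smoothing and height truncation, Corollary~\ref{cor:Y_mul_mixing} fed into the cumulant method of \cite{BG}, and Rogers' formula for the variance. The gap is in the step you defer with ``once those estimates are available''. That step is exactly where the fixed shift must be handled, and your account of it is wrong.

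Discarding the first terms, replacing $\chi$ by $f_\varepsilon$, and removing the truncation (Lemma~\ref{lem:F_N_F_N_epsilon_L}) all need bounds, uniform in $s$, for the unbounded functions $\hat f\circ b^s$ on the null set $\Y_\xi$. Rogers' formula on $\tilde{\X}$ does not give these. Equidistribution only transfers integrals of bounded smooth functions, so ``Rogers on $\tilde{\X}$ plus equidistribution'' is circular. What is needed is $\sup_{s\ge0}\|\hat f\circ b^s\|_{L^2(\Y_\xi)}<\infty$ (Lemma~\ref{prop:hatf_L_1_2}). The paper proves this by a direct second-moment computation over pairs $(\bar q,\bar\ell)$ on the torus. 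There the shift is absorbed by measure-preserving affine torus endomorphisms, and linearly dependent pairs need a separate count. This bound, together with the exponent condition below, is where the paper uses $m\ge2$. It is not used in Theorem~\ref{thm: main thm equidistribution}, which holds for all $m,n$.

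Conversely, the escape-of-mass estimate you call the main obstacle does not depend on the shift. We have $\alpha([A,b]\tilde\Gamma)=\alpha_0(A\Z^d)$ and $|\hat f|\ll\alpha$, so $\mu_{\Y_\xi}(\{\alpha\circ b^s\ge L\})\ll L^{-p}$ is just the homogeneous bound of \cite[Prop.~4.5]{BG} (Lemma~\ref{lem:Y_alpha_bound}). Shi's height function is used only inside the proof of Theorem~\ref{thm: main thm equidistribution}. There it shows that most base points $a_su(\eta)y$ avoid thin cylinders, so that Kim's genericity condition $\|h\|\le\zeta(b,e^{wr})^{\delta_1}$ holds (Propositions~\ref{main prop} and~\ref{lem:imp stop time}).

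This also affects your parameters. The truncation height cannot be taken ``small''. With the $L^2(\Y_\xi)$ bound and Cauchy--Schwarz, the truncation error is $\ll N^{1/2}L^{-p/2}$ with $p<m+n$, so you need $N=o(L^p)$. On the other hand, tuples $(s_1,\dots,s_r)$ without a gap larger than $\gamma\asymp\log N$ are not killed by mixing. They contribute $N^{1-r/2}\gamma^{r-1}L^{(r-(m+n-1))^+}$, so polynomial growth in $N$ alone is not enough. With $L=N^q$ one needs $q>1/(m+n)$ and $q(r-(m+n-1))^+<r/2-1$ for all $r\ge3$. Any $q\in(1/(m+n),1/2)$ works, which is possible since $m+n\ge3$.

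Finally, the mean is better obtained exactly than through equidistribution, since for the unbounded, discontinuous $\hat\chi$ equidistribution would again need the estimates above. Because the torus endomorphisms preserve Lebesgue measure, $\int_{\Y_\xi}\hat\chi\circ b^s\,d\mu_{\Y_\xi}$ can be computed directly and is independent of $\xi$. This gives $C_{m,n}N+O(1)$ (Lemma~\ref{lem:final}).
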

\begin{remark}
We stress that even in the equal-weight setting, Theorem~\ref{thm: main thm} significantly strengthens~  \cite[Theorem 1.4]{DFV}. Indeed, the set of vectors $\xi \notin \mathrm{Liou}(\mathbb R^m)$ has full Lebesgue measure, thereby recovering their result and furthermore strengthening it since the exceptional set $\mathrm{Liou}(\mathbb R^m)$ has Hausdorff dimension zero. Thus, Theorem \ref{thm: main thm} applies to a substantially larger class of shifts. Moreover, since the condition $\xi \notin \mathrm{Liou}(\mathbb R^m)$ is explicit, it allows one to produce concrete examples where Theorem \ref{thm: main thm} applies. For instance, Theorem~\ref{thm: main thm} holds for vectors such as
\[
(e,\dots,e), \quad (\pi,\dots,\pi), \quad (\sqrt{2},\dots,\sqrt{2}),
\]
and, more generally, whenever at least one coordinate is algebraic (since every algebraic number is non-Liouville by Liouville’s theorem).
\end{remark}

\begin{remark}
It is natural to enquire, however, if the Diophantine condition we impose is necessary. We already know this not to be the case because a CLT with fixed \emph{rational} shift can be found in \cite{AG23}. What is clear is that Theorem \ref{thm: main thm equidistribution} below needs this condition. In light of W. M. Schmidt's almost sure asymptotic counting theorem which holds with no conditions, and the fixed rational shift result from \cite{AG23}, we pose the following question with cautious optimism.\\

\textbf{Question}: Prove an analogue of Theorem \ref{thm: main thm} for arbitrary fixed shifts.\\

We remark that the variance is expected to depend on arithmetic properties of the shift and may differ for rational or near-rational vectors.
\end{remark}

\subsection{Homogeneous dynamics}

We now state the effective multi-equidistribution for diagonal translates of unipotent flows on the space of affine unimodular lattices that will be key ingredient in the proof of Theorem~\ref{thm: main thm}. To do the same, we will need the following notation.

Let $d := m+n$, and define
\begin{align*}
    \tilde G := \SL_d(\mathbb{R}) \ltimes \mathbb{R}^d,
\qquad
    \tilde \Gamma := \SL_d(\mathbb{Z}) \ltimes \mathbb{Z}^d.
\end{align*}
Let $\tilde{\X} := \tilde G / \tilde \Gamma$, and denote by $\mu_{\tilde{\X}}$ the unique $\tilde G$-invariant probability measure on $\tilde{\X}$.

Fix weights $w_1,\dots,w_{m+n} > 0$ satisfying
\[
\sum_{i=1}^{m} w_i = \sum_{j=1}^{n} w_{m+j},
\]
and define, for $t \in \mathbb{R}$ and $\theta \in \Mat$,
\begin{equation}\label{equ:b_t}
    a_t := \diag \bigl(e^{w_1 t},\dots,e^{w_m t},e^{-w_{m+1}t},\dots,e^{-w_{m+n}t}\bigr), \quad u(\theta) :=
\begin{pmatrix}
    I_m & \theta \\
    0 & I_n
\end{pmatrix}.
\end{equation}
Throughout this paper, we will view $u(\theta)$ and $a_t$ as elements of $\tilde G$ via the embedding $g \mapsto [g,0]$.

\medskip

The main dynamical result of the paper is the following theorem.

\begin{theorem}\label{thm: main thm equidistribution}
Fix $y = [g,gv]\tilde{\Gamma} \in \tilde{\X}$ with $v \notin \mathrm{Liou}(\mathbb{R}^d)$, $\ell \in \mathbb{N}$, and a compact subset $V \subset \Mat$. Then there exists a constant $\delta = \delta(y,\ell) > 0$ and $k \in \mathbb{N}$ such that the following holds.

For any $F_0 \in C_c^\infty(\Mat)$ with $\mathrm{supp}(F_0) \subset V$, and for all $F_1,\dots,F_\ell \in C_c^\infty(\tilde{\X})$, and all $t_1,\dots,t_\ell > 0$, we have
\begin{align*}
\int_{\Mat} F_0(\theta) \prod_{i=1}^\ell F_i(a_{t_i}u(\theta)y)\, d\theta
&=
\left( \int_{\Mat} F_0(\theta)\, d\theta \right)
\left( \prod_{i=1}^\ell \int_{\tilde{\X}} F_i\, d\mu_{\tilde{\X}} \right) \\
&\quad + O_{y,\ell,V}\!\left(
e^{-\delta D(t_1,\dots,t_\ell)}
\|F_0\|_{C^1}
\prod_{i=1}^{\ell} \mathcal{S}_k(F_i)
\right),
\end{align*}
where the Sobolev norm $\mathcal{S}_{k}(\cdot)$ is defined as in~\eqref{eq:sobolev norm} and 
\begin{align}
    \label{eq:def D}
D(t_1,\dots,t_\ell) := \min\{t_i,\ |t_i - t_j| : 1 \le i \ne j \le \ell\}.
\end{align}
\end{theorem}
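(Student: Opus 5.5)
We argue by induction on $\ell$, following the strategy of Kleinbock--Margulis and Björklund--Gorodnik for multiple equidistribution. The two inputs we combine are Kim's effective single equidistribution and Shi's height function.

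\textbf{Input 1: effective single equidistribution.} Kim's result \cite{Kim2024} gives effective equidistribution of translates $a_t u(\theta) y$ for a single $t$. The rate is polynomial in $e^{-t}$, provided the point $y=[g,gv]\tilde\Gamma$ is Diophantine in the affine direction, which is where $v\notin\mathrm{Liou}(\R^d)$ enters. Concretely, for $\theta$-averages over a compact $V$ and any compactly supported smooth $F$ we have
\[
\int F_0(\theta)F(a_tu(\theta)y)\,d\theta=\int F_0\int F\,d\mu_{\tilde\X}+O\big(e^{-\delta_1 t}\|F_0\|_{C^1}\mathcal S_k(F)\big).
\]
Non-Liouvilleness of $v$ guarantees that for large $t$ the translates do not concentrate near the fibres over rational points, that is, near the closed orbits $\SL_d(\R)\ltimes \frac1q\Z^d$. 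Its role is to give a uniform polynomial bound on the proportion of $\theta$ for which $a_tu(\theta)y$ lies within $e^{-\kappa t}$ of such a periodic orbit with $q\le e^{\kappa t}$. We will also need this statement with $y$ replaced by nearby points $a_su(\theta_0)y$, uniformly in $s$, with constants depending only on a height.

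\textbf{Input 2: the height function.} Shi's construction \cite{Shi20} supplies a proper function $\beta$ on $\tilde\X$ with two properties. First, it satisfies a contraction inequality of the form $\int_V \beta(a_tu(\theta)x)\,d\theta\le c e^{-\lambda t}\beta(x)+b$. Second, it controls both the cusp of $\SL_d(\R)/\SL_d(\Z)$ and the distance to the rational fibres. Together with Kim's theorem this yields a single-equidistribution estimate valid for every starting point $x$, with error $\beta(x)^{C}e^{-\delta_1 t}$. This uniformity is needed in the inductive step, where base points are themselves translates.

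\textbf{Inductive step.} Order $t_1<\dots<t_\ell$ and set $D=D(t_1,\dots,t_\ell)$. Choose $r=\epsilon(t_\ell-t_{\ell-1})$ and cover $V$ by boxes $B$ of side $e^{-(t_{\ell-1}+r)w_{\max}}$. On each box $B=\theta_0+B_0$, the functions $F_i(a_{t_i}u(\theta)y)$ for $i<\ell$ are nearly constant up to error $e^{-r}\mathcal S_k(F_i)$. This holds because $a_{t_i}u(\theta-\theta_0)a_{-t_i}$ has size $\ll e^{-r}$, by a Lipschitz bound. We also write
\[
a_{t_\ell}u(\theta)y=a_{t_\ell-t_{\ell-1}-r}\,u(\theta')\,a_{t_{\ell-1}+r}u(\theta_0)y,
\]
where $\theta'$ ranges over a box of unit size. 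Applying the uniform single equidistribution at the base point $x_B=a_{t_{\ell-1}+r}u(\theta_0)y$ with time $t_\ell-t_{\ell-1}-r$ replaces $F_\ell$ by its mean. The error is $\beta(x_B)^C e^{-\delta_1(1-\epsilon)(t_\ell-t_{\ell-1})}$. Summing over boxes, $\sum_B|B|\beta(x_B)^C$ is bounded by the contraction inequality, using a truncated power of $\beta$ so that integrability holds. This leaves $\int F_0\prod_{i<\ell}F_i$, and we apply the induction hypothesis to it. Mollifying $F_0$ on the boxes costs only $\|F_0\|_{C^1}$. Finally, we choose $\epsilon$ and $\delta$ so that all the errors are $\ll e^{-\delta D}$.

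\textbf{Main obstacle.} The hard step is the uniformity of Kim's effective estimate in the base point. It must hold not merely for the fixed non-Liouville $y$ but for the translates $x_B$, with a polynomial dependence on Shi's height. This requires translating the non-Liouville hypothesis on $v$ into a quantitative statement: for $\theta$ outside a set of measure $\ll e^{-\kappa s}$, the point $a_su(\theta)y$ is at height at most $e^{\kappa' s}$, measured with respect to the rational fibres. We would first try to prove this directly by a Dani-correspondence count. For $q\le e^{\kappa s}$, the condition that $a_su(\theta)y$ be near $\frac1q\Z^d$-fibres forces $qv$ to be close to an integer vector. The non-Liouville bound $\|qv-\mathbf p\|\gg q^{-E}$ then excludes this except on a small set of $\theta$.
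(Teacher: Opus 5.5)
Your inductive skeleton matches the paper's. Freezing $F_0,\dots,F_{\ell-1}$ on boxes at the scale of $a_{t_{\ell-1}+r}$, rewriting $a_{t_\ell}u(\theta)y=a_{r'}u(\theta')x_B$, and applying single equidistribution at $x_B$ is Proposition~\ref{prop:base step of Induction} (with $s=(t_\ell+t_{\ell-1})/2$, $r=(t_\ell-t_{\ell-1})/2$) combined with Lemma~\ref{lem: C^k norm of restricted function}. Use boxes $\eta+b_s^{-1}Jc_s^{-1}$ rather than cubes; otherwise $\theta'$ does not range over a unit box when the weights differ.

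The gap is the control of the base points $x_B$, which is the actual content of the theorem. Shi does not provide the function $\beta$ of your Input~2. Moreover, with $C,\delta_1,k$ fixed, no function giving error $\beta(x)^C e^{-\delta_1 t}\mathcal{S}_k(f)$ can be finite at every non-Liouville $y$. Suppose $\|qv-\mathbf p\|\le q^{-E}$ for infinitely many $q$. At times with $e^{w_{\max}t}$ a small multiple of $q^{E}$ (where $w_{\max}=\max_i w_i$), the whole piece $\{a_tu(\theta)y:\theta\in V\}$ lies within a small multiple of $1/q$ of the $q$-torsion fibre $\{[h,h\mathbf p'/q]\tilde{\Gamma}\}$. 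A test function oscillating at frequency $q$, so $\mathcal{S}_k\ll q^k$, then detects a discrepancy $\gg 1$. Hence $\beta(y)^C\gg q^{\delta_1E/w_{\max}-k}$ along infinitely many $q$, and $\beta(y)=\infty$ once $E$ is large in terms of $k/\delta_1$. Such $v$ can be non-Liouville. This is exactly why $\delta=\delta(y)$, and why Kim's rate is $\zeta(v,e^{wt})^{-\delta_1}$ under the hypothesis $\|g\|\le\zeta(v,e^{wt})^{\delta_1}$. So your contraction bound on $\sum_B|B|\beta(x_B)^C$ has no finite starting value.

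In the paper, Shi's $\alpha_{\e_0}$ lives on $\SL_{d+1}(\R)/\SL_{d+1}(\Z)$ for the $\SL_d(\R)$-action. It is applied to the lattice $\tilde y$ rescaled by $\diag((\e_0^{-1}B)^{1/d}I_d,\e_0B^{-1})$, which commutes with $\SL_d(\R)$; there is one rescaling for each cylinder height $B$. Non-Liouvilleness enters only through the polynomial growth $\alpha_{\e_0}^\vartheta(\tilde y)\ll B^{\beta(y)}$.

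Your fallback in the last paragraph does not close the gap either. First, it is quantified at the wrong time scale. A height bound $e^{\kappa' s}$ at the translation time $s=t_{\ell-1}+r$ yields an error $e^{C\kappa' s}e^{-\delta_1(t_\ell-t_{\ell-1}-r)}$. This does not decay when $t_{\ell-1}\gg t_\ell-t_{\ell-1}$, which is allowed since $D$ only sees the gaps. What you need is an exceptional set measured at the scale $T=e^{wr'}$ of the remaining time and uniform in $s$. This is Proposition~\ref{lem:imp stop time}: $|\{\theta:a_su(\theta)y\notin\ccG(\kappa,T,\e)\}|\ll T^{-\delta_2}+e^{-\delta_3 s}$. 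The paper obtains it by unwinding Kim's hypothesis $\|h\|\le\zeta(b,T)^{\delta_1}$. Failure means either a cusp excursion, handled by Kleinbock--Margulis, or a nonzero vector of $\begin{pmatrix}a_su(\theta)&\\&1\end{pmatrix}\begin{pmatrix}g&gv\\&1\end{pmatrix}\Z^{d+1}$ in $\mathcal{B}(A,B)$ with $A\approx T^{-1+O(\e)}$ and $B\approx T^{O(\e)}$. Proposition~\ref{main prop} bounds the latter set by $\beta_0A^{\beta_1}B^{\beta_2}$, uniformly in $s$.

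Second, your description of that count is wrong. Smallness of $a_su(\theta)g(\mathbf p+qv)$ does not force $\mathbf p+qv$ to be small, since it may have size $e^{cs}$ in contracted directions. Non-Liouvilleness only disposes of the $O(1)$ vectors per $q$ for which $\|g(\mathbf p+qv)\|$ itself is tiny; all other vectors need a transversality estimate in $\theta$. Such an estimate is available. Write $g(\mathbf p+qv)=(x',x'')\in\R^m\times\R^n$. The set of $\theta\in\mathrm{M}_{m\times n}([-L,L])$ with $\|a_su(\theta)g(\mathbf p+qv)\|\le A$ has measure $\ll_L A^m e^{-(w_1+\cdots+w_m)s}\|x''\|^{-m}$. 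Summing over the translate $g\Z^d+qgv$, and using $\|\mathbf p+qv\|\gg|q|^{-E}$ for the few points with $\|x''\|<1$, gives $\ll A^mB^{mE}$ per $q$ uniformly in $s$ (when $A\ll B^{-E}$; otherwise the bound is trivial). So a genuine first-moment count could replace Shi's height function here. As written, however, your proposal contains neither that estimate nor the reduction from Kim's hypothesis to thin cylinders.
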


We refer the reader to \cite{Strom-2015, Strom-Vishe, Prin, Browning-Vinogradov, Kim2024} and the references therein for earlier results on this very active area. The condition on $v$ is natural; indeed it is well known that even effective single equidistribution theorems in the present context are characterized by Diophantine properties of the shift affecting the rate of equidistribution. We refer the reader to the references above, especially \cite{Kim2024}. As mentioned above, it is satisfying that Theorem \ref{thm: main thm equidistribution} applies to an explicit and very large set of vectors, i.e. the complement of the Liouville vectors, a $0$ Hausdorff dimension set.    

\subsection*{Strategy of the proof}

The main new technical contribution of the paper lies in the proof of Theorem~\ref{thm: main thm equidistribution}. To illustrate the key idea underlying the proof, we restrict to the simplest case $m=n=1$ and $d=2$.

The strategy of deducing multi-equidistribution from single equidistribution on homogeneous spaces is well known, originating in the works~\cite{KSW, BG,BGeff}. The argument proceeds by induction: for the case of the space of affine lattices, the base case, namely, single equidistribution, follows from Kim~\cite{Kim2024} (Theorem~\ref{thm:kim}), while the inductive step reduces to establishing a suitable form of \emph{double equidistribution}. More precisely, one needs to show that for any $s>r>0$, the measure supported on $a_{r+s}u(\theta)y$, as $\theta$ varies with respect to a distribution $f(\theta)\,d\theta$, is close to the Haar measure on $\tilde{\X}$, with an error of the form
\[
e^{-\delta r}\|f\|_{\Lip} + e^{-\delta s} \|f\|_{C^0}.
\]
Once this estimate is established, the passage to multi-equidistribution is standard.\\

\noindent The proof of double equidistribution begins by decomposing the interval $[0,1]$ into subintervals of length $e^{-2s}$. Writing $\theta = e^{-2s}(j + \theta')$ with $\theta' \in [0,1]$, we obtain
\begin{align} \nonumber
\int_{0}^1 f_0(\theta)\, f_1(a_{r+s}u(\theta)y)\, d\theta
= \frac{1}{\lfloor e^{2s}\rfloor} \sum_{j=0}^{\lfloor e^{2s}\rfloor} \int_{0}^{1} 
f_0(e^{-2s}(j+\theta'))\, f_1(a_r u(\theta')a_su(e^{-2s}j)y)\,d\theta' 
+ O(e^{-2s}).
\end{align}
For each $j$, the function $\theta' \mapsto f_0(e^{-2s}(j+\theta'))$ is nearly constant, and can be approximated by a constant up to an error $O(e^{-2s})$. Thus, it suffices to analyze integrals of the form
\begin{align}
\label{eq:intro strategy 2}
\int_{0}^{1} f_1(a_r u(\theta)a_su(e^{-2s}j)y)\,d\theta,
\end{align}
to which we apply single equidistribution and then sum over all $j$.

The key point to note here is that the rate of convergence in \eqref{eq:intro strategy 2} depends on the base point $a_su(e^{-2s}j)y$. To obtain an overall exponential rate in the sum, one must show that for most $j$, the convergence in~\eqref{eq:intro strategy 2} is exponentially fast.

In the works~\cite{KSW, BG}, which treat the homogeneous space of unimodular lattices, the rate of convergence depends only on the distance to the cusp. In that setting, one fixes a compact set (depending on $r$) on which convergence is uniformly exponentially fast, and shows—using non-divergence estimates for unipotent flows—that, with high probability, the points $a_su(e^{-2s}j)y$ lie in this compact set. The exceptional set has measure $\ll e^{-\delta s}+e^{-\delta r}$, which yields the desired estimate.\\

In the present setting, the situation is significantly more delicate. The rate of convergence depends not only on the distance to the cusp, but also on the distance to infinitely many $\SL_2(\R)$-invariant homogeneous subspaces embedded in $\tilde{\X}$. The main new technical ingredient of this paper is to show that, with high probability, the points $a_su(e^{-2s}j)y$ stay away from both the cusp and these subspaces. This ensures uniform exponential convergence to Haar measure for most points, thereby yielding double equidistribution.

To make this precise, we use single equidistribution (Theorem~\ref{thm:kim}) to note that the condition of being away from both the cusp and the invariant subspaces can be expressed in terms of the shift and the norm of a representative $y=[g,gv]\tilde{\Gamma}$ as
\[
\|g\| \leq \zeta(v,e^{wr})^\delta.
\]
We refer to Section~\ref{subsec:gen} for the definition of $\zeta(\cdot)$ and further details.

In Section~\ref{subsec:gen}, we show that violating this condition is equivalent to the existence of a short vector in a thin cylinder for the lattice
\[
\begin{pmatrix}
    g & gv \\ & 1
\end{pmatrix}\Z^{d+1}.
\]
Using this observation, the problem reduces to showing that for any $y=[g,gv]\tilde{\Gamma}\in \tilde{\X}$, for most $\theta$, the lattice
\[
\begin{pmatrix}
    a_t u(\theta) \\ & 1
\end{pmatrix}
\begin{pmatrix}
    g & gv \\ & 1
\end{pmatrix}\Z^{d+1}
\]
does not contain a vector in a thin cylinder. This is precisely the content of Section~\ref{sec:Shi}, where the argument relies on the height function machinery developed by Shi~\cite{Shi20}. 

Combining the above steps, we obtain double equidistribution, thereby completing the proof of Theorem~\ref{thm: main thm equidistribution}.

Finally, Theorem~\ref{thm: main thm} follows from Theorem~\ref{thm: main thm equidistribution} using the approach of~\cite{BG}. This involves reducing the problem to a central limit theorem for Birkhoff sums of a Siegel transform and then applying effective equidistribution to control the cumulants (see Section~\ref{sec:mainproof} for details).

\subsection*{Acknowledgements}
Part of this work was carried out while the first and third authors were visiting CIRM, Luminy. The hospitality of the Institute is gratefully acknowledged. The first author thanks Alexander Gorodnik for various helpful discussions during the preparation of this paper.

\section{Notation I} \label{sec:notation 1}

\subsection{Sobolev norm}
For $Y \in \mathrm{Lie}(\tilde{G})$, let ${D}_Y$ denote the first-order differential operator on 
$C_c^{\infty}( \tilde{\mathcal{X}})$ defined by
\[
{D}_Y(f)(y) := \left.\frac{d}{dt} \, f(\exp(tY)y)\right|_{t=0}.
\]  
\noindent Fix an ordered basis $\{Y_1,\dots,Y_r\}$ of $\Lie(\tilde{G})$. For a monomial $Z = Y_1^{\ell_1} \cdots Y_r^{\ell_r}$, define the corresponding differential operator
\[
D_Z := D_{Y_1}^{\ell_1} \cdots D_{Y_r}^{\ell_r},
\]
and its degree by
\[
\deg(Z) := \ell_1 + \cdots + \ell_r.
\]
We denote the uniform norm on $C_c( \tilde{\mathcal{X}})$ by $\|\cdot\|_{C^0}$ and for $k\geq 1,$ we define the $C^k$ norm on $C_c^{\infty}( \tilde{\mathcal{X}})$ by
\[\|f\|_{C^k}:= \sum_{\deg(Z) \leq k} \big\| {D}_Z f \big\|_{C^0} \]

\noindent For $k \geq 1$ and $f \in C_c^\infty( \tilde{\mathcal{X}})$, we define the $L^2$-Sobolev norm
\begin{align}
    \label{eq:sobolev norm}
    \mathcal{S}_{k}(f) 
:= \max\left\{ \left( \sum_{\deg(Z) \leq k} \int_{ \tilde{\mathcal{X}}} \big|\text{ht}(y)^k ({D}_Z f)(y) \big|^2 \, d\mu_{ \tilde{\mathcal{X}}}(y) \right)^{1/2}, \,\, \|f\|_{C^k}  \right\},
\end{align}
where for $y \in \tmX$,
\[
\text{ht}(y):=\sup \left\{\|gw\|^{-1}: y=[g,gv]\tilde{\Gamma},\,\, w \in \mathbb Z^d \setminus \{0\}\right\}.
\]
 
\begin{remark}\label{rem:sobnorm_pro}  
In view of \cite[Equation (3.11)]{EMV}, there exists a constant $\kappa=\kappa(m,n,k)$ (which also depends on the fixed choice of weights $w_1,\dots,w_{m+n}$) such that
\[\mathcal{S}_k(f \circ a_t) \ll e^{\kappa t} \mathcal{S}_k(f), \quad \forall \,t \in \mathbb R \text{ and } f \in C_c^{\infty}(\tilde{\X}),\]
     where the implied constant is independent of both $t $ and $f.$
\end{remark}
  
\subsection{Weighted Lipschitz norm $ \|\cdot\|_{w,t}$} \label{subsec:weighted norm}
For $t \in \R$, we define 
   $$
   b_t:= \begin{pmatrix}
       e^{w_1 t}\\ & \ddots \\&& e^{w_m t}
   \end{pmatrix} , \quad c_t:= \begin{pmatrix}
       e^{w_{n+1} t}\\ & \ddots \\&& e^{w_{m+n} t}
   \end{pmatrix}.
   $$
   It is clear that for any matrix $\theta \in \Mat$, we have
   \begin{align}
       \label{eq:a1}
       a_tu(\theta)a_t^{-1} = u(b_t\theta c_t).
   \end{align}

For a function $f \in C_c^{\infty}(\Mat)$ and $t>0$, we define
\begin{align}
\label{eq:def weighted norm}
    \|f\|_{w,t}:= \sup\left\{|f(\eta+ b_t^{-1} \theta c_t^{-1})- f(\eta)|: \theta \in \mathrm{M}_{m \times n}([-1/2,1/2]), \eta \in \Mat\right\}.
\end{align}
The following lemma gives a bound on $ \|f\|_{w,t}$.
\begin{lemma}
    \label{lem: C^k norm of restricted function}
   For each $ r\geq 1$, there exists a constant $c_r>0$ such that the following holds. Suppose $F_0 \in C_c^\infty(\Mat)$ and $F_1, \ldots, F_{r} \in C_c^\infty(\tmX)$. Suppose $0<t_1\leq \cdots \leq t_r\leq s$ and $y \in \tmX$. Then the norm $\|\cdot\|_{w,s}$ of the function 
   \begin{align}
       \label{eq: aaa 1}
   \theta \mapsto F_0(\theta) F_1({a}_{t_1} u(\theta)y) \cdots F_r({a}_{t_r} u(\theta)y),
   \end{align}
  is less than $c_r e^{-4 w (s-t_{r})} \|F_0\|_{C^1} \cdots \|F_r\|_{C^1}$, where
  \begin{align}\label{eq: def w}
     w := \frac{1}{2} \min_{1 \leq i \leq d} w_i.
 \end{align}
\end{lemma}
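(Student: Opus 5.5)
We need to bound $|\Phi(\eta+b_s^{-1}\theta c_s^{-1})-\Phi(\eta)|$, where $\Phi$ denotes the function \eqref{eq: aaa 1}. The plan is a telescoping product estimate, so it suffices to control each factor separately. Write $\Phi = G_0 G_1\cdots G_r$ with $G_0=F_0$ and $G_i(\eta)=F_i(a_{t_i}u(\eta)y)$. For $\theta\in \mathrm{M}_{m\times n}([-1/2,1/2])$ put $\eta'=\eta+b_s^{-1}\theta c_s^{-1}$. Then
\[
\Phi(\eta')-\Phi(\eta)=\sum_{i=0}^r \Bigl(\prod_{j<i}G_j(\eta')\Bigr)\bigl(G_i(\eta')-G_i(\eta)\bigr)\Bigl(\prod_{j>i}G_j(\eta)\Bigr),
\]
and each $G_j$ is bounded in sup norm by $\|F_j\|_{C^1}$. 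Hence it suffices to show $|G_i(\eta')-G_i(\eta)|\ll e^{-4w(s-t_r)}\|F_i\|_{C^1}$ for each $i$; summing the $r+1$ terms then gives the constant $c_r$.

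For $i=0$ this is the mean value theorem: $|F_0(\eta')-F_0(\eta)|\le \|F_0\|_{C^1}\,\|b_s^{-1}\theta c_s^{-1}\|$. Each entry of $b_s^{-1}\theta c_s^{-1}$ has size at most $e^{-(w_i+w_{m+j})s}\le e^{-4ws}$, since $w_i+w_{m+j}\ge 2\min_k w_k=4w$. Because $s\ge s-t_r$, this is bounded by $e^{-4w(s-t_r)}$.

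For $i\ge1$, use $u(\eta')=u(b_s^{-1}\theta c_s^{-1})u(\eta)$ together with \eqref{eq:a1}:
\[
a_{t_i}u(\eta')y = u\bigl(b_{t_i}b_s^{-1}\theta c_s^{-1}c_{t_i}\bigr)\,a_{t_i}u(\eta)y .
\]
The matrix $\theta_i:=b_{t_i-s}\theta c_{t_i-s}$ has entries bounded by $e^{-(w_k+w_{m+l})(s-t_i)}\le e^{-4w(s-t_i)}\le e^{-4w(s-t_r)}$, using $t_i\le t_r\le s$. Since $F_i$ is $C^1$ along the group action, moving a point by $u(\theta_i)=\exp(\sum \theta_{i,kl}E_{kl})$ changes $F_i$ by at most $\|F_i\|_{C^1}\cdot O(\|\theta_i\|)$. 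Concretely, integrate along the path $\tau\mapsto u(\tau\theta_i)x$; its derivative equals $D_{Y}F_i$ with $Y=\sum\theta_{i,kl}E_{kl}$, and this is bounded by a constant times $\|\theta_i\|\,\|F_i\|_{C^1}$, the constant depending on how the $E_{kl}$ are expressed in the fixed basis $\{Y_j\}$. This gives the claimed bound.

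No step is a real obstacle. The only points requiring care are the ordering in \eqref{eq:a1}, which determines whether the perturbation is conjugated to the left of $a_{t_i}$ and hence contracted, and the observation that $s-t_i\ge s-t_r$ for all $i$, which makes the factor $e^{-4w(s-t_r)}$ the worst case.
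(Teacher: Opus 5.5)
Your proposal is correct and follows the paper's proof. Both arguments use the product-rule (telescoping) estimate to reduce to one factor at a time. Both bound $F_0$ directly using the contraction $e^{-4ws}\le e^{-4w(s-t_r)}$, and both use \eqref{eq:a1} to rewrite the perturbation as $u(b_{s-t_i}^{-1}\theta c_{s-t_i}^{-1})$ acting to the left of $a_{t_i}u(\eta)y$, which is then controlled by $\|F_i\|_{C^1}$.

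Your write-up is slightly more explicit than the paper's on two points: the entrywise bound via $w_k+w_{m+l}\ge 4w$, and the derivative $D_Y$ along the path $\tau\mapsto u(\tau\theta_i)x$.
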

\begin{proof}
 Fix $r \geq 1$. For $1\leq i \leq r$, define $\tilde{F}_i: \Mat \rightarrow \R$ as $$\tilde{F}_i(\theta)= F_i(a_{t_i} u(\theta)y).$$
   Clearly
   \begin{align}
    \nonumber
       \|\tilde{F}_i\|_{w,s} &=  \sup\left\{\left|\tilde{F}_i\left(\eta+ b_s^{-1} \theta c_s^{-1}\right)- \tilde{F}_i(\eta)\right|: \theta \in \mathrm{M}_{m \times n}\left([-1/2,1/2]\right), \eta \in \Mat \right\}  \\
       &= \sup\left\{\left|F_i\left(a_{t_i} u(\eta+ b_s^{-1} \theta c_s^{-1})y\right)- F_i\left(a_{t_i} u(\eta)y\right)\right|: \theta \in \mathrm{M}_{m \times n}\left([-1/2,1/2]\right), \eta \in \Mat\right\} \nonumber\\
       &= \sup\left\{\left|F_i\left( u(b_{s-t_{i}}^{-1} \theta c_{s-t_{i}}^{-1} )a_{t_i} u(\eta)y\right)- F_i(a_{t_i} u(\eta)y)\right|: \theta \in \mathrm{M}_{m \times n}([-1/2,1/2]), \eta \in \Mat\right\} \nonumber \\
       &\ll e^{-4w(s-t_i)} \|F_i\|_{C^1} \leq e^{-4w(s-t_r)} \|F_i\|_{C^1}. \nonumber
   \end{align}
   Now the $\|\cdot\|_{w,s}$ norm of the function \eqref{eq: aaa 1} equals
    \begin{align*}
        \|F_0 \cdot \tilde{F}_1 \cdots \tilde{F}_r \|_{w,s} &\leq \|F_0\|_{w,s} \prod_{i=1}^{r}\|\tilde{F}_i\|_{C^0}  +  \|F_0\|_{C^0} \sum_{i=1}^r \|\tilde{F}_i\|_{w,s} \left( \prod_{j \neq i} \|\tilde{F}_j\|_{C^0} \right) \\
        &\ll e^{-4w(s-t_r)} \|F_0\|_{C^1} \prod_{i=1}^{r}\|\tilde{F}_i\|_{C^0} +  \|F_0\|_{C^0} \sum_{i=1}^r e^{-4w(s-t_r)}\|{F}_i\|_{C^1} \left( \prod_{j \neq i} \|{F}_j\|_{C^0} \right) \\
        &\ll  e^{-4w(s-t_i)} \|F_0\|_{C^1} \cdots \|F_r\|_{C^1},
    \end{align*}
    where the implied constant depends only on $r$. Hence proved.
\end{proof}

\section{Vectors in a thin cylinder}\label{sec:Shi}
The main aim of this subsection is to prove the following proposition.
\begin{proposition}
\label{main prop}
Suppose $y=[g,gv]\tilde{\Gamma}\in \tilde{\X}$ with $v\notin \Liou(\R^d)$, and let $L>0$. Then there exist constants $\beta_0,\beta_1,\beta_2>0$ (depending on $y$ and $L$) such that for all $0<A<1<B$, the set $S(L,A,B)$ of all $\theta \in \mathrm{M}_{m \times n}([-L,L])$ for which the lattice
\[
\begin{pmatrix}
    a_t u(\theta) & \\ & 1
\end{pmatrix}
\begin{pmatrix}
    g & gv \\ &1
\end{pmatrix}\Z^{d+1}
\]
contains a nonzero vector in the region
\begin{align}
\label{eq:def R}
\mathcal{B}(A,B)= \{(x,y) \in \R^{d} \times \R : \|x\|\leq A, \ |y| \leq B \}
\end{align}
satisfies
\[
 |S(L,A,B)| \leq \beta_0 A^{\beta_1} B^{\beta_2}.
\]
\end{proposition}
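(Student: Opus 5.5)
The plan is to reduce the statement to a quantitative non-divergence estimate on the space of unimodular lattices in $\R^{d+1}$, uniform in $t\ge 0$. The estimate will come from Shi's height function \cite{Shi20} for the group $\{\diag(a_tu(\theta),1)\}$, which acts trivially on the last coordinate. The non-Liouville hypothesis will enter only through the initial value of the height function.

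Write $\Lambda:=\begin{pmatrix} g & gv\\ & 1\end{pmatrix}\Z^{d+1}$. A lattice vector has the form $(g(\mathbf p+q v),q)$, and after applying $\diag(a_tu(\theta),1)$ it becomes $(a_tu(\theta)g(\mathbf p+qv),q)$.

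\textbf{Step 1 (rescaling).} Let $h_\lambda:=\diag(\lambda I_d,\lambda^{-d})$; it commutes with $\diag(a_tu(\theta),1)$. Choose $\lambda=(B/A)^{1/(d+1)}$. Then every nonzero vector in $\mathcal B(A,B)$ is mapped into the ball of radius
\[
\varepsilon:=A^{d/(d+1)}B^{1/(d+1)}.
\]
Hence $\theta\in S(L,A,B)$ forces $\diag(a_tu(\theta),1)\,h_\lambda\Lambda$ to contain a nonzero vector of length $\le\varepsilon$. The case $\varepsilon\ge 1$ is trivial after enlarging $\beta_0$, since $|S|\le(2L)^{mn}$. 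This reduction also works because $B^{1/(d+1)}$ can be absorbed into a positive power of $B$ in the final bound.

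\textbf{Step 2 (height function and contraction).} Take Shi's height function $\alpha$ on $\SL_{d+1}(\R)/\SL_{d+1}(\Z)$, built from covolumes $\|\Delta\|$ of discrete subgroups $\Delta$ measured in a norm adapted to the splitting $\R^d\oplus\R e_{d+1}$. The relevant input from \cite{Shi20} is the Margulis-type inequality
\[
\int_{\mathrm M_{m\times n}([-L,L])}\alpha^{\beta}\bigl(\diag(a_tu(\theta),1)\Lambda'\bigr)\,d\theta\ \le\ C_L\,\alpha^{\beta}(\Lambda')
\]
for some small $\beta>0$, uniformly in $t\ge0$ and in the lattice $\Lambda'$. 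One also has $\alpha(\Lambda'')\ge$ (length of the shortest vector of $\Lambda''$)$^{-1}$. Chebyshev's inequality then gives
\[
|S(L,A,B)|\le C_L\,\varepsilon^{\beta}\,\alpha^{\beta}(h_\lambda\Lambda).
\]

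\textbf{Step 3 (initial height; main obstacle).} It remains to show $\alpha(h_\lambda\Lambda)\ll_y\lambda^{N}$ for some $N$ depending on $y$. The danger is subgroups of $\Lambda$ lying close to subspaces that the flow cannot expand, namely those containing or nearly containing the $e_{d+1}$ direction. Such a subgroup would have small covolume and would stay small for all $t$. To rule this out, use $v\notin\Liou(\R^d)$: there is $E$ with
\[
\min_{\mathbf p}\|\mathbf p+qv\|\gg_v |q|^{-E}\qquad\text{for all } q\neq0.
\]
So lattice vectors $(g(\mathbf p+qv),q)$ with large $|q|$ are not too close to the line $\R e_{d+1}$. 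Via Minkowski and wedge-product (exterior power) estimates, this gives a polynomial lower bound on covolumes of all primitive subgroups of $h_\lambda\Lambda$, in the norm used in $\alpha$. I expect this to be the main difficulty: one must check that the exact form of Shi's height function only involves these quantities, and track the exponents through each exterior power.

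\textbf{Conclusion.} Combining Steps 1--3 gives
\[
|S(L,A,B)|\ll \varepsilon^{\beta}\lambda^{N\beta}=A^{\frac{d\beta}{d+1}-\frac{N\beta}{d+1}}\,B^{\frac{\beta}{d+1}+\frac{N\beta}{d+1}}.
\]
If the $A$-exponent is not positive, the choice of $\lambda$ in Step 1 must be rebalanced, for instance $\lambda=A^{-\kappa}$ with small $\kappa$ rather than $(B/A)^{1/(d+1)}$, at the cost of a larger $B$-exponent. This yields the claimed bound $\beta_0A^{\beta_1}B^{\beta_2}$ with constants depending only on $y$ and $L$.
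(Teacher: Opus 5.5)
There is a genuine gap, and it sits exactly where you suspected trouble. It comes from the rescaling in Step~1 combined with the fact that Step~2 only uses that the height detects \emph{Euclidean-short} vectors. To turn $\mathcal{B}(A,B)$ into a small ball you must contract the $e_{d+1}$-coordinate, i.e.\ you need $\lambda^{-d}B\le\varepsilon$, so $\lambda\ge(B/\varepsilon)^{1/d}$. This holds for every rebalancing, including $\lambda=A^{-\kappa}$.

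At scale $\lambda$ the initial height of $h_\lambda\Lambda$ sees every denominator $|q|\ll\lambda^{d}$. If $\|\mathbf p+qv\|\le q^{-E}$ has infinitely many solutions, a vector with $q\approx\lambda^d$ has $\R^d$-part of size $\approx\lambda^{1-dE}$. So no bound $\alpha(h_\lambda\Lambda)\ll\lambda^N$ with $N<dE-1$ can hold. Your final bound then satisfies
\[
\varepsilon^{\beta}\lambda^{N\beta}\ \ge\ \varepsilon^{\beta}(B/\varepsilon)^{N\beta/d}\ \ge\ \varepsilon^{\beta(1-N/d)}\ \ge\ 1
\]
whenever $N\ge d$ and $\varepsilon\le 1$. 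This happens for any non-Liouville $v$ of Diophantine exponent $E\ge 1+1/d$, which the hypothesis allows. So within your framework no choice of $\lambda$ yields a positive power of $A$: the loss from contracting the last coordinate always beats the gain from $A$.

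The missing idea is the finer detection property built into Shi's function. For a lattice vector $w$ one has $\varphi_{\e_0}(w)=\e_0^{\delta_1/\delta_{\eta_1}}\|\pi_{\eta_1}(w)\|^{-1/\delta_{\eta_1}}$ as soon as $|\pi_0(w)|$ lies below a \emph{fixed} threshold. Hence the $e_{d+1}$-component, on which $\SL_d(\R)$ acts trivially, only has to be bounded, not small. The paper therefore rescales by $\diag\bigl((\e_0^{-1}B)^{1/d}I_d,\ \e_0B^{-1}\bigr)$, which depends on $B$ alone. The argument then runs as follows:
\begin{itemize}
\item A vector of $\mathcal{B}(A,B)$ becomes one with bounded $\pi_0$-part and $\|\pi_{\eta_1}\|\ll B^{1/d}A$. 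So $\alpha_{\e_0}^{\vartheta}(a_tu(\theta)\tilde y)\gg(A^dB)^{-\kappa}$ for $\theta\in S(L,A,B)$.
\item The initial height now involves only $|q|\le B$ at level one. Your Step~3 argument (non-Liouville bound, Minkowski, exterior powers) then gives $\alpha_{\e_0}^{\vartheta}(\tilde y)\ll B^{\beta}$.
\item Chebyshev together with Corollary~\ref{cor: shi} gives $|S(L,A,B)|\ll(A^dB)^{\kappa}(B^{\beta}+1)$.
\end{itemize}
All $A$-dependence now comes from the detection step and none from the initial height. Thus $\beta_1=d\kappa>0$ however well $v$ is approximable. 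Apart from this, your Steps~2--3 match the paper. Shi's inequality carries an additive constant that your version omits, but this is harmless.
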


\subsection{Height function}
We define \[\Y:= \SL_{d+1}(\R)/\SL_{d+1}(\Z),\] which we identify with the space of unimodular lattices in $\R^{d+1}$ via the map
$$
g \, \SL_{d+1}(\Z) \mapsto g \Z^{d+1}.
$$
Throughout this section, we regard $\SL_d(\R)$ as a subgroup of $\SL_{d+1}(\R)$ via the embedding
$$
g \mapsto \begin{pmatrix}
    g  \\ & 1
\end{pmatrix}.
$$

We now recall the construction of the height function from \cite[\S 4.1]{Shi20} for the action of $\SL_d(\R)$ on $\Y$.

\noindent Let
\[
\mathfrak{a} := \left\{ \begin{pmatrix}
    t_1 \\ & \ddots \\ && t_d
\end{pmatrix} : \sum_{i=1}^d t_i = 0 \right\}
\]
be the Lie algebra of the diagonal subgroup \[A:=\left\{\diag(e^{t_1},\dots,e^{t_d}):\sum_{i=1}^{d}t_i=0\right\} \subset \SL_d(\R).\] 
Define linear functionals $\omega_i$ on $\mathfrak{a}$ by
$$
\omega_i \left( \begin{pmatrix}
    t_1 \\ & \ddots \\ && t_d
\end{pmatrix} \right)  = t_i.
$$
\noindent Then the root system of $\mathfrak{a}$ is
\[
\Phi = \{ \omega_i - \omega_j : i \neq j \}, \qquad
\Phi^+ = \{ \omega_i - \omega_j : i < j \}.
\]

 Let $\{e_i:i=1,\dots,d+1\}$  denote the standard basis of $\mathbb R^{d+1}.$ Consider the natural representation of $\SL_d(\R)$ on
$\R^{d+1} = \R^d \oplus \R e_{d+1}$, which induces the decomposition
\[
\bigwedge^k \R^{d+1}
=
\bigwedge^k \R^d
\;\oplus\;
\left(\bigwedge^{k-1} \R^d \wedge e_{d+1}\right).
\]
Each $\bigwedge^k \R^d$ is irreducible with highest weight
\[
\eta_k = \omega_1 + \cdots + \omega_k.
\]

\noindent Thus, the set of highest weights appearing in $\bigwedge^* \R^{d+1}:=\oplus_{0 \leq i \leq d+1} \bigwedge^i \R^{d+1}$ is
\[
P^+ = \{0, \eta_1, \ldots, \eta_{d-1}\}.
\]
\noindent For each $\eta \in P^+$, let $\pi_{\eta}$ denote the projection from $\bigwedge^* \R^{d+1}$ onto the sum of subrepresentations with highest weight $\eta$. Define
$$
\delta_\eta:= \eta (\log(a_1)), \quad \delta_i:= (d+1-i)i.
$$
Note that for each $\eta \in P^+ \setminus \{0\}$, we have $\delta_\eta=\eta(\log(a_1))>0$. 

  Let $\e>0$ and $0<i<d+1$. For every $v \in \bigwedge^i \R^{d+1}$, define
\begin{align}
    \label{eq:def phi}
    \varphi_\e(v):= \begin{cases}
       \min_{\eta \in P^+ \setminus 0} \e^{\frac{\delta_i}{\delta_\eta}} \|\pi_\eta(v)\|^{\frac{-1}{\delta_\eta}} & \text{ if } \|\pi_0(v)\| \leq \e^{\delta_i} \\
        0 &\text{otherwise,}
    \end{cases}
\end{align}
\noindent where we extend the norm on $\R^{d+1}$ naturally to an $L^{\infty}$ norm on the exterior algebra.

Finally, define $\alpha_\e: \Y \rightarrow [0,\infty]$ by
\begin{align}
    \label{eq:def alpha}
    \alpha_\e(y):= \max \varphi_\e(v),
\end{align}
where the maximum is taken over all the non-zero $y$-integral monomials $v \in \bigwedge^i \R^{d+1}$ with $0 < i < d+1$. Recall that a vector $v \in \bigwedge^i \R^{d+1}$ is called a $y$-integral monomial if it can be written in the form $v = v_1 \wedge \cdots \wedge v_i$ with $v_1, \ldots, v_i \in y$, where $y$ is identified as an unimodular lattice in $\R^{d+1}$.

\begin{lemma}[{\cite[Lem.~4.4]{Shi20}}] \label{lem:shi 1}
 Let $L>0$. Then there exists $t_0>0$, $0<\e_0<1, \vartheta>0$, $b>0$, and $0<\aaa<1$ such that for all $y \in \Y$, 
    $$
    \int_{\mathrm{M}_{m \times n}([-1,1])} \alpha_{\e_0}^\vartheta(a_{t_0} u(L\theta) y) \, d\theta \leq \aaa \alpha_{\e_0}^\vartheta(y) + b.
    $$
    %\colorbox{red}{Change symbol a to something else}
\end{lemma}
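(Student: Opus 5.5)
The plan is to follow the Eskin--Margulis--Mozes scheme as adapted by Shi: first prove a contraction estimate for a single vector in each representation, then use a convexity property of $\varphi_{\e}$ to reduce the maximum in \eqref{eq:def alpha} to finitely many monomials.

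\emph{Step 1: linear contraction for a single highest weight.} Fix $\eta\in P^+\setminus\{0\}$ and a vector $v\in\bigwedge^i\R^{d+1}$ with $\pi_\eta(v)\neq 0$. I would show that for every $c'>0$ there exist $t_0$ and $\vartheta_0>0$ such that for all $0<\vartheta\le\vartheta_0$ and all such $v$,
\[
\int_{\mathrm{M}_{m\times n}([-1,1])}\|\pi_\eta(a_{t_0}u(L\theta)v)\|^{-\vartheta/\delta_\eta}\,d\theta\le c'\,\|\pi_\eta(v)\|^{-\vartheta/\delta_\eta}.
\]
\begin{itemize}
\item Project onto the highest weight line. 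On this line $a_{t_0}$ acts by $e^{\delta_\eta t_0}$ (after rescaling $t$ so that the time-one map is $a_1$).
\item The map $\theta\mapsto u(L\theta)\pi_\eta(v)$ is polynomial of bounded degree.
\item Because $u(\cdot)$ generates the expanding horospherical subgroup of $a_t$, no nonzero vector in an irreducible summand with $\eta\neq 0$ has its whole $u$-orbit inside the sum of lower weight spaces. By compactness of the unit sphere, the highest weight coordinate of $u(L\theta)\pi_\eta(v)$ has sup norm $\gg\|\pi_\eta(v)\|$ on $[-1,1]^{mn}$.
\item The $(C,\alpha)$-good property of polynomials (a Remez-type inequality) then makes $\|\cdot\|^{-\vartheta/\delta_\eta}$ integrable for small $\vartheta$, with a uniform constant. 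The factor $e^{-\vartheta t_0}$ supplies the contraction once $t_0$ is large.
\end{itemize}
The trivial summand (spanned by $\bigwedge^d\R^d$ and $e_{d+1}$-type vectors) is $\SL_d$-invariant, so $\pi_0(v)$ is unchanged by $a_{t_0}u(L\theta)$. The cutoff $\|\pi_0(v)\|\le\e^{\delta_i}$ in \eqref{eq:def phi} is therefore preserved, and taking the minimum over $\eta$ costs only a bounded factor.

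\emph{Step 2: Margulis-type convexity for $\varphi_\e$.} For $y$-integral monomials $v\in\bigwedge^i$, $w\in\bigwedge^j$, with $v\wedge w$ and the intersection monomial $v\cap w$ formed from the lattice, I would prove
\[
\varphi_\e(v)\varphi_\e(w)\ \le\ C\max\bigl\{\varphi_\e(v\cap w)\varphi_\e(v\wedge w),\ \text{bounded terms}\bigr\}.
\]
This should follow from $\|v\|\|w\|\gg\|v\cap w\|\|v\wedge w\|$ applied componentwise to the $\pi_\eta$-parts. The exponents $\delta_i=(d+1-i)i$ are chosen to be concave in $i$, which is exactly what makes the $\e$-powers balance; the same balance must be checked for the $\pi_0$-cutoff. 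With $\e_0$ small, this implies that for each degree $i$ at most one monomial (up to sign) has $\varphi_{\e_0}$ within a bounded factor of $\alpha_{\e_0}(y)$ while $\alpha_{\e_0}(y)$ is large.

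\emph{Step 3: combining.} Since $a_{t_0}u(L\theta)$ ranges over a compact set, it distorts each $\varphi_{\e_0}$ by at most a factor $M$. Hence
\[
\alpha_{\e_0}(a_{t_0}u(L\theta)y)\le\max_v\varphi_{\e_0}(a_{t_0}u(L\theta)v),
\]
where it suffices to take $v$ among monomials with $\varphi_{\e_0}(v)\ge M^{-2}\alpha_{\e_0}(y)$. If $\alpha_{\e_0}(y)\le$ some constant, the bound $b$ absorbs everything. Otherwise Step 2 leaves at most one candidate per degree, so at most $d$ candidates. I would bound $\max^\vartheta$ by the sum of $\vartheta$-powers over these candidates and apply Step 1 to each. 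This gives $\aaa=d\,c'M^{\vartheta}<1$ once $c'$ is chosen small, and hence the lemma.

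\emph{Main obstacle.} I expect Step 2 to be the hardest part: verifying that the truncated functions $\varphi_\e$, with the $\pi_0$ condition, still satisfy the submultiplicative inequality uniformly. This is where the particular normalization $\e^{\delta_i/\delta_\eta}$ has to be used carefully.
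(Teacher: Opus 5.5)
The paper gives no proof of this lemma; it is quoted from Shi, whose $\alpha_\e$ is the Benoist--Quint function. Your three steps (contraction on each isotypic piece, a convexity inequality for $\varphi_\e$, a uniqueness/flag argument) match the architecture of that cited argument.

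\textbf{Steps 1 and 3.} These are essentially right, with small repairs.
\begin{itemize}
\item In Step 1 you must fix $\vartheta$ before choosing $t_0$. For fixed $t_0$ the integral tends to $2^{mn}$ as $\vartheta\to 0$, so no $c'<2^{mn}$ can work for all small $\vartheta$.
\item The top $a_1$-eigenspace in $V_\eta$ is in general not a line.
\item You need $e^{\delta_\eta}$ to be the largest eigenvalue of $a_1$ on $V_\eta$, i.e.\ the weights must be ordered so that $\log a_1$ is dominant.
\item The constants are then chosen in the order $\vartheta$, $t_0$, $M$, $\e_0$. This is legitimate because Step 1 does not involve $\e$.
\item In Step 3 restrict to primitive monomials. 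This is harmless, since $\varphi_\e(nv)\le\varphi_\e(v)$ for integers $n\ge 1$.
\end{itemize}

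\textbf{The gap is Step 2, which carries the whole difficulty.}

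First, your inequality has only a constant $C$. From $\varphi_{\e_0}(v),\varphi_{\e_0}(w)\ge M^{-2}\alpha_{\e_0}(y)$ it gives merely $M^{-4}\le C$, not the uniqueness used in Step 3. You need a gain $\e^{c}$ with $c>0$.

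Second, the proposed justification is not available. There is no componentwise version of $\|v\|\|w\|\gg\|v\cap w\|\|v\wedge w\|$, because the isotypic components of the intersection and sum monomials mix different components of $v$ and $w$. Already for two vectors $v=(x_v,y_v)$, $w=(x_w,y_w)$ one has $\pi_{\eta_1}(v\wedge w)=(y_wx_v-y_vx_w)\wedge e_{d+1}$, which involves the $\pi_0$-parts. Moreover, since the exponents $1/\delta_\eta$ differ, bounds on different components cannot be multiplied into a single product inequality for $\varphi_\e$. What one can prove, and what Step 3 needs, is a min/max statement.

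\textbf{One way to supply the missing idea.}
\begin{itemize}
\item For $\beta>0$ put $\|v\|_\beta:=\max_{\eta\in P^+}\beta^{\delta_\eta}\|\pi_\eta(v)\|$ with $\delta_0:=0$. For $v\in\bigwedge^i\R^{d+1}$ one has exactly: $\varphi_\e(v)\ge\beta$ if and only if $\|v\|_\beta\le\e^{\delta_i}$. So the $\pi_0$-cutoff is just the term $\eta=0$.
\item For $\beta=e^s\ge 1$ and $k\in\mathrm{SO}(d)$, one always has $\|a_skx\|\ll\|x\|_\beta$.
\item Conversely, $\|a_skx\|\gg\|x\|_\beta$ for $k$ outside a set of Haar measure at most $1/4$, uniformly in $x$. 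This follows from compactness and irreducibility: a nonzero $\mathrm{SO}(d)$-orbit in $V_\eta$ cannot lie in the sum of the non-top eigenspaces of $a_1$.
\item Choose one $k$ that is good for both $u$ and $z=u\wedge v'\wedge w'$. Applying the Euclidean Eskin--Margulis--Mozes inequality to $a_sku$, $a_skv'$, $a_skw'$ gives $\|u\|_\beta\|z\|_\beta\le C\|u\wedge v'\|_\beta\|u\wedge w'\|_\beta$, uniformly in $\beta\ge1$.
\item Combine this with $\delta_{r+a}+\delta_{r+b}-\delta_r-\delta_{r+a+b}=2ab>0$, where $r$ is the degree of $u$. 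The result is $\max\{\varphi_\e(u),\varphi_\e(z)\}\ge C^{-1}\e^{-c}\min\{\varphi_\e(u\wedge v'),\varphi_\e(u\wedge w')\}$ for some $c>0$, whenever the minimum is at least $1$ (with the obvious conventions in degrees $0$ and $d+1$).
\item Taking $\e_0$ small in terms of $M$ then leaves at most one primitive candidate per degree, and your Step 3 goes through.
\end{itemize}
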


\begin{lemma}[{\cite[Lem.~4.8]{Shi20}}]  \label{lem:shi 2}
Let  $L>0$, $t>0$, $k\in \N$, and $y \in \Y$.  Then for any non-negative function $\psi$ on $\Y$,
    \begin{align*}
        \int_{\mathrm{M}_{m \times n}([-1,1])} \psi(a_{(k+1)t} u(L \theta) y)\, d\theta \leq \int_{\mathrm{M}_{m \times n}([-1,1])} \int_{\mathrm{M}_{m \times n}([-1,1])} \psi(a_{t} u(L\eta) a_{kt} u(L \theta) y)\, d\theta \, d\eta.
    \end{align*}
\end{lemma}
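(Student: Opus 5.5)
The plan is to unfold the right-hand side by a conjugation identity, turning it into a single integral of $\psi(a_{(k+1)t}u(L\cdot)y)$ against an explicit density, and then to show that this density dominates the indicator of $\mathrm{M}_{m\times n}([-1,1])$ pointwise.

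\textbf{Step 1 (conjugation).} Identity~\eqref{eq:a1} gives $u(\eta')a_{kt}=a_{kt}\,u(b_{kt}^{-1}\eta' c_{kt}^{-1})$. Hence
\[
a_t u(L\eta) a_{kt} u(L\theta) = a_{(k+1)t}\, u\bigl(L(\theta + b_{kt}^{-1}\eta c_{kt}^{-1})\bigr).
\]
The $(i,j)$ entry of $b_{kt}^{-1}\eta c_{kt}^{-1}$ is $s_{ij}\eta_{ij}$, where $s_{ij}=e^{-(w_i+w_{m+j})kt}\in(0,1]$, since all weights are positive and $k,t>0$.

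\textbf{Step 2 (Fubini and a density).} Write $f(x):=\psi(a_{(k+1)t}u(Lx)y)\ge 0$ for $x\in\Mat$. Fubini and the translation $x=\theta+b_{kt}^{-1}\eta c_{kt}^{-1}$ in the $\theta$-integral give
\[
\int\!\!\int f\bigl(\theta+b_{kt}^{-1}\eta c_{kt}^{-1}\bigr)\,d\theta\,d\eta=\int_{\Mat} f(x)\,\rho(x)\,dx .
\]
Here
\[
\rho(x)=\int_{\mathrm{M}_{m\times n}([-1,1])}\mathbf 1_{[-1,1]^{mn}}\bigl(x-b_{kt}^{-1}\eta c_{kt}^{-1}\bigr)\,d\eta=\prod_{i,j}\bigl|\{\eta_{ij}\in[-1,1]: |x_{ij}-s_{ij}\eta_{ij}|\le 1\}\bigr| .
\]
The left-hand side of the lemma is $\int f(x)\mathbf 1_{[-1,1]^{mn}}(x)\,dx$. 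Since $f\ge0$, it therefore suffices to show $\rho\ge1$ on $\mathrm{M}_{m\times n}([-1,1])$.

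\textbf{Step 3 (lower bound on each factor).} Fix $x_{ij}\in[-1,1]$ and suppose, say, $x_{ij}\ge0$. Then every $\eta_{ij}\in[0,1]$ satisfies $x_{ij}-s_{ij}\eta_{ij}\in[x_{ij}-1,x_{ij}]\subset[-1,1]$. So the factor has length at least $1$; the case $x_{ij}\le0$ is symmetric, using $\eta_{ij}\in[-1,0]$. Taking the product over $i,j$ gives $\rho(x)\ge 1$, which proves the inequality.

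The only point requiring care is Step 3. The inequality is not a pure change of variables: the shift moves the domain of integration, and the factor $2^{mn}$ coming from the unnormalized $\eta$-measure is exactly what compensates for the loss near the boundary of the box. This is where the hypothesis $s_{ij}\le 1$, i.e.\ positivity of the weights, is used.
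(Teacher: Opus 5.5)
Your proof is correct. The conjugation identity gives $a_tu(L\eta)a_{kt}u(L\theta)=a_{(k+1)t}\,u\bigl(L(\theta+b_{kt}^{-1}\eta c_{kt}^{-1})\bigr)$. Tonelli turns the right-hand side into $\int f\rho$, and your orthant argument shows $\rho\ge 1$ on $\mathrm{M}_{m\times n}([-1,1])$, which is exactly what is needed. The only tacit assumption is that $\psi$ is Borel measurable.

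The paper itself gives no argument here. It imports the statement from \cite[Lem.~4.8]{Shi20}, so there is no in-paper proof to compare with, and your write-up serves as a self-contained verification. What it adds over the bare citation is that it makes the hypotheses transparent. $L$ plays no role, and the only dynamical input is the contraction $s_{ij}=e^{-(w_i+w_{m+j})kt}\le 1$. The inequality also depends on the $\eta$-integral being taken against unnormalized Lebesgue measure of total mass $2^{mn}$. Your bound $\rho\ge 1$ is attained exactly at the corners of the box. So with the $\eta$-integral normalized to a probability measure, the inequality would in general fail, for instance for $\psi$ concentrated near the image of a corner. This is consistent with Lemma~\ref{lem:shi 1} and Corollary~\ref{cor: shi}, where all integrals are likewise against unnormalized Lebesgue measure on $\mathrm{M}_{m\times n}([-1,1])$.
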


\begin{corollary}
\label{cor: shi}
     Let $L>1$. Then there exists $C_0,C_1>0$, $0<\e_0<1$, and $\vartheta>0$ such that for all $y \in \Y$ and all $t>0$, 
     \begin{align}
         \label{eq:cor: shi}
          \int_{\mathrm{M}_{m \times n}([-L,L])} \alpha_{\e_0}^\vartheta(a_{t} u(\theta) y) \, d\theta \leq C_0 \alpha_{\e_0}^\vartheta(y) + C_1.
     \end{align}
\end{corollary}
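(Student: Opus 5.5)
We derive Corollary~\ref{cor: shi} from Lemma~\ref{lem:shi 1} by iterating the contraction along a geometric sequence of times, using Lemma~\ref{lem:shi 2} to split the long translate into short steps. Bounded times are handled separately by a compactness argument.

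First we fix the data. Apply Lemma~\ref{lem:shi 1} with the given $L$ to obtain $t_0,\e_0,\vartheta,b,\aaa$. The change of variables $\theta\mapsto L\theta$ rewrites the left side of \eqref{eq:cor: shi} as $L^{mn}\int_{\mathrm{M}_{m\times n}([-1,1])}\alpha_{\e_0}^\vartheta(a_t u(L\theta)y)\,d\theta$, so it suffices to bound this last integral.

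Next we treat multiples of $t_0$, proving by induction on $k\ge 1$ that
\[
I_k(y):=\int_{\mathrm{M}_{m\times n}([-1,1])}\alpha_{\e_0}^\vartheta(a_{kt_0}u(L\theta)y)\,d\theta\le \aaa^k\alpha_{\e_0}^\vartheta(y)+b\sum_{j<k}\aaa^j
\]
for all $y$. The case $k=1$ is Lemma~\ref{lem:shi 1}. For the inductive step, Lemma~\ref{lem:shi 2} with $\psi=\alpha_{\e_0}^\vartheta$ and $t=t_0$ gives
\[
I_{k+1}(y)\le\int\!\!\int\alpha_{\e_0}^\vartheta\bigl(a_{t_0}u(L\eta)a_{kt_0}u(L\theta)y\bigr)\,d\eta\,d\theta .
\]
We apply Lemma~\ref{lem:shi 1} to the inner $\eta$-integral at the point $a_{kt_0}u(L\theta)y$, which bounds it by $\aaa\,\alpha_{\e_0}^\vartheta(a_{kt_0}u(L\theta)y)+b$. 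Integrating over $\theta$ yields $I_{k+1}\le \aaa I_k+2^{mn}b$, where $2^{mn}$ is the volume of the cube. This is fine after adjusting the constants: the cube factor only enlarges $b$. Since $\aaa<1$, we conclude $I_k\le \alpha_{\e_0}^\vartheta(y)+b'/(1-\aaa)$ uniformly in $k$.

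It remains to handle general $t>0$, and this is the main obstacle. Write $t=kt_0+s$ with $0\le s<t_0$. We want to reduce to the multiples of $t_0$ using $a_t u(L\theta)=a_{kt_0}u(L\theta')a_s$, where $\theta'=b_s\theta c_s$ by \eqref{eq:a1}. Substituting moves $a_s$ onto $y$ and stretches the integration region by a bounded factor $e^{O(t_0)}$.

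Two points need care here. First, the enlarged cube $[-L',L']$ must be covered by boundedly many translates of $[-L,L]$. Translates are harmless: $u(L\theta_0)$ can be absorbed into the base point, since the bound at multiples of $t_0$ holds for every $y$, with constants depending on $L'$ (equivalently, Lemma~\ref{lem:shi 1} is applied with $L'$ in place of $L$, and the translate is absorbed into the base point). Second, we need the comparison $\alpha_{\e_0}(hy)\ll\alpha_{\e_0}(y)$ uniformly for $h$ in the compact set $\{a_s u(\theta_0): 0\le s\le t_0,\ \|\theta_0\|\le L'\}$. This follows from the definition \eqref{eq:def phi}. A bounded $h$ distorts each $\|\pi_\eta(v)\|$ by a bounded factor, but it may mix the $\pi_0$ component with the others and thereby change the cutoff condition $\|\pi_0(v)\|\le\e^{\delta_i}$. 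We handle this as Shi does, by using the log-Lipschitz property of $\alpha_\e$ under bounded elements, which holds at the cost of replacing $\e_0$ by a comparable value; since $\e_0$ is ours to choose, we take it from \cite{Shi20} in a form allowing this. The same comparison also covers times $t<t_0$ directly.

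Combining these steps gives \eqref{eq:cor: shi} with $C_0,C_1$ depending only on $L$.
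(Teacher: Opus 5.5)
Your proof is correct. Its core, iterating Lemma~\ref{lem:shi 1} through Lemma~\ref{lem:shi 2} to get $I_{k}\le \aaa I_{k-1}+2^{mn}b$ and summing the geometric series, is exactly the paper's argument.

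You differ only in the remainder $t=kt_0+s$, which you call the main obstacle. You commute $a_s$ past $u(L\theta)$ onto the base point. That forces the change of variables $\theta'=b_s\theta c_s$, a covering of the stretched box by translates, absorbing the $u(L\theta_0)$ into the base point, and a comparison bound for the whole family $\{u(\theta_0)a_s\}$. The paper avoids all of this by letting $a_s$ act on the outside: since $a_t u(L\theta)y=a_s\,(a_{kt_0}u(L\theta)y)$, one application of $\alpha^\vartheta_{\e_0}(a_sz)\le C_0'\alpha^\vartheta_{\e_0}(z)$ for $0\le s\le t_0$ (from \cite[Lem.~4.1]{Shi20}) reduces directly to the multiple-of-$t_0$ integral over the same cube. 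Both routes work; the paper's is simply shorter.

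Two clarifications on your comparison step:

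\begin{enumerate}
\item Your worry that a bounded $h$ might mix the $\pi_0$-component with the others, forcing a change of $\e_0$, does not arise. Every element you use ($a_s$, $u(\theta_0)$) lies in $\SL_d(\R)$, the group whose representation defines the $\pi_\eta$. Hence $\pi_\eta(hv)=h\pi_\eta(v)$ and $\pi_0(hv)=\pi_0(v)$, so $\alpha_{\e_0}(hy)\ll_h\alpha_{\e_0}(y)$ holds with the \emph{same} $\e_0$. You should state this rather than hedge. The corollary needs the same $\e_0$ on both sides, and $\e_0$ is fixed jointly with $t_0,\vartheta,b,\aaa$ by Lemma~\ref{lem:shi 1}.
\item For the same reason, drop the parenthetical alternative of applying Lemma~\ref{lem:shi 1} with $L'$ in place of $L$. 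That would change $t_0,\e_0,\vartheta$ and break the decomposition $t=kt_0+s$. Your translate-absorption version is the one that works.
\end{enumerate}
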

\begin{proof}
   Fix $L>1$. Let $t_0>0$, $\e_0>0$, $\vartheta>0$, $b>0$, and $0<c<1$ be as in Lemma~\ref{lem:shi 1}. Also let $C_0'>0$ be a constant such that for all $0 \leq s \leq t_0$, 
   \begin{equation}\label{equ:log_lip}
      \alpha_{\e_0}^\vartheta(a_s y) \leq C_0' \alpha_{\e_0}^\vartheta(y). 
   \end{equation}
The existence of such a constant follows from the properties of $\alpha_\e$ proved in \cite[Lem.~4.1]{Shi20}.

\noindent Fix $t>0$, and write $t= k t_0+ s$, where $k \in \N$ and $0 \leq s<t_0$. Then 
   \begin{align*}
        \int_{\mathrm{M}_{m \times n}([-L,L])} \alpha_{\e_0}^\vartheta(a_{t} u(\theta) y) \, d\theta & =   L^{mn}\int_{\mathrm{M}_{m \times n}([-1,1])} \alpha_{\e_0}^\vartheta(a_{t} u(L\theta) y) \, d\theta  \\
       &\leq L^{mn} C_0' \int_{\mathrm{M}_{m \times n}([-1,1])} \alpha_{\e_0}^\vartheta(a_{kt_0} u(L\theta) y) \, d\theta,  
   \end{align*}
   where we used \eqref{equ:log_lip}.

   \noindent Next, applying Lemma \ref{lem:shi 2} and Lemma \ref{lem:shi 1}, we obtain
 \begin{align*}
   \int_{\mathrm{M}_{m \times n}([-1,1])} \alpha_{\e_0}^\vartheta(a_{kt_0} u(L\theta) y) \, d\theta &\leq   \int_{M_{m \times n}([-1,1])} \left( \int_{\mathrm{M}_{m \times n}([-1,1])} \alpha_{\e_0}^\vartheta(a_{t} u(L\eta) a_{(k-1)t} u(L \theta) y)\, d\eta \right)\, d\theta \\
       &\leq  \aaa \int_{\mathrm{M}_{m \times n}([-1,1])} \alpha_{\e_0}^\vartheta(a_{(k-1)t} u(L\theta) y)\, d\theta    + 2^{mn}b.
   \end{align*}
   Iterating this inequality yields
   \begin{align*}
   \int_{\mathrm{M}_{m \times n}([-1,1])} \alpha_{\e_0}^\vartheta(a_{kt_0} u(L\theta) y) \, d\theta \leq  \aaa^k \int_{\mathrm{M}_{m \times n}([-1,1])} \alpha_{\e_0}^\vartheta(y)\, d\theta    + 2^{mn} b(1+ \aaa+ \cdots+ \aaa^{k-1}).
   \end{align*}
   The corollary now follows by taking $C_0= (2L)^{mn}C_0'$ and $C_1= (2L)^{mn}C_0' b/(1-\aaa)$.
\end{proof}

\subsection{Proof of Proposition~\ref{main prop}}
Throughout this subsection, we fix $0<A<1<B$ and $L>1$. Also fix constants $C_0, C_1>0$, $\e_0>0$, and $\vartheta>0$ as in Corollary~\ref{cor: shi}.

\noindent For $y= [g,gv]\tilde{\Gamma} \in \tmX$, define 
$$
\tilde{y} := \begin{pmatrix}
 (\e_0^{-1} B)^{1/d} I_d   \\ & \e_0 B^{-1}
\end{pmatrix} \begin{pmatrix}
    g & gv \\ & 1
\end{pmatrix} \SL_{d+1}(\Z) \in \Y.
$$

\begin{lemma}
\label{lem:shi imp 1}
    Suppose $y= [g,gv]\tilde{\Gamma} \in \tmX$ with $v \notin \Liou(\R^d)$. Then there exists $\beta= \beta(y)>0$ such that
    $$
    \alpha_{\e_0}^\vartheta(\tilde{y}) \ll B^{\beta},
    $$
    where the implied constant is independent of $B$.
\end{lemma}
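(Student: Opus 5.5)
We have to bound $\alpha_{\e_0}(\tilde y)$ polynomially in $B$. Recall $\alpha_{\e_0}(\tilde y)=\max\varphi_{\e_0}(v)$ over $\tilde y$-integral monomials $v=v_1\wedge\cdots\wedge v_i$, $0<i<d+1$. By \eqref{eq:def phi}, $\varphi_{\e_0}(v)$ is nonzero only if $\|\pi_0(v)\|\le \e_0^{\delta_i}$. In that case it is bounded by $\e_0^{\delta_i/\delta_\eta}\|\pi_\eta(v)\|^{-1/\delta_\eta}$ for every $\eta\ne0$. It therefore suffices to show: for every monomial $v$ in $\tilde y$ with $\varphi_{\e_0}(v)\neq 0$, some nonzero-weight component satisfies $\|\pi_\eta(v)\|\gg B^{-\beta'}$. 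Then $\alpha_{\e_0}(\tilde y)\ll B^{\beta'/\min\delta_\eta}$, and raising to the power $\vartheta$ gives the claim.

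First I identify the components. Write $h=\begin{pmatrix} g & gv\\ & 1\end{pmatrix}$ and $D_B=\diag((\e_0^{-1}B)^{1/d}I_d,\e_0B^{-1})$. The decomposition is $\bigwedge^i\R^{d+1}=\bigwedge^i\R^d\oplus(\bigwedge^{i-1}\R^d\wedge e_{d+1})$. The trivial-weight part $\pi_0$ lives only in degrees $0$ and $d+1$, i.e. essentially in $\bigwedge^d\R^d$ and in $e_{d+1}$ alone. The part with highest weight $\eta_k$ is $\bigwedge^k\R^d$ together with $\bigwedge^{k}\R^d\wedge e_{d+1}$.

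Now take an integral monomial $w=D_Bh(z_1\wedge\cdots\wedge z_i)$ with $z_j\in\Z^{d+1}$, and let $W=\Span(z_1,\dots,z_i)$. I will split into cases according to $i$ and $W$.

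\textbf{Case $1<i<d$.} Every component is nontrivial. The $\R^d$-directions get expanded by $D_B$ by at least $B^{(i-1)/d}\e_0^{O(1)}$, and $e_{d+1}$ contracts by at most $B^{-1}$. The vector $h(z_1\wedge\cdots\wedge z_i)$ is a nonzero integral vector transformed by a fixed matrix, so it has norm $\gg_y 1$. Hence $\|w\|\gg B^{-1}$, and thus some $\|\pi_\eta(w)\|\gg B^{-1}$. The same crude bound works whenever the projection of $w$ to nontrivial types is comparable to $\|w\|$.

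\textbf{Dangerous cases.} These are the ones where the only large component is trivial-weight, or where everything is small.

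For $i=1$, take $w=D_Bh z$ with $z=(p,q)$, so $w=((\e_0^{-1}B)^{1/d}g(p+qv),\ \e_0B^{-1}q)$. Its $\R^d$ part is the weight-$\eta_1$ component. Suppose this part is tiny, say $\le B^{-\beta'}$. Then $\|p+qv\|\ll_g B^{-\beta'-1/d}$. Also, since $\varphi\ne0$, we need $|\e_0B^{-1}q|\le\e_0^{\delta_1}$, i.e. $|q|\ll B$. We may take $q\neq0$, because for $q=0$ and $p\neq 0$ the norm is large. Then $\|v+p/q\|\ll B^{-\beta'-1/d}|q|^{-1}$.

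Since $v\notin\Liou(\R^d)$, there exist $E$ and $c>0$ with $\|v-p/q\|\ge c\,q^{-E}$ for all $q$. Combined with $|q|\ll B$, this gives $B^{-\beta'}\gg B^{-(E-1)}$ up to constants. So choosing $\beta'>E$ yields a contradiction for large $B$. This is the only place the non-Liouville hypothesis enters, and I expect it to be the main obstacle.

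For $i=d$ (and, by duality, $i$ near $d$) the trivial component is $\bigwedge^d\R^d$. The nontrivial part is $\bigwedge^{d-1}\R^d\wedge e_{d+1}$, which encodes the $e_{d+1}$-coordinates of $W$ against a $(d-1)$-plane. The condition that it be tiny while $\|\pi_0\|\le\e_0^{\delta_d}$ again forces rational relations near $v$. I would handle this using the dual lattice $h^{-*}\Z^{d+1}$. Here the non-Liouville property transfers to the dual linear-form problem with exponent depending on $E$ and $d$, by Khintchine transference, which needs care. Alternatively, I would argue directly that a small such $w$ yields an integral vector $z$ in the dual giving a small value of the linear form $q\cdot v+p$, contradicting non-Liouville with a polynomial bound. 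Collecting all cases gives $\beta=\vartheta\beta'/\min_\eta\delta_\eta$, depending on $y$ through $g$ and $E$.
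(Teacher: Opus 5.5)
You correctly reduce to finding one nontrivial component $\pi_\eta(w)$ bounded below polynomially in $B$, and your cases $i=1$ and $1<i<d$ are fine. For $i=1$ you use the non-Liouville bound exactly as the paper does. For $1<i<d$, your estimate $\|w\|\gg_g B^{-1}$ follows from the discreteness of $\bigwedge^i h(\bigwedge^i\Z^{d+1})$ and the smallest scaling factor of $\bigwedge^i D_B$. This is simpler than the paper's route (every nonzero lattice vector has norm $\gg B^{-E}$, then Minkowski's second theorem gives $\|w\|\gg B^{-iE}$), and it needs no Diophantine input. You are also right that $i=d$ must be treated separately. Since $\bigwedge^d\R^d$ is the trivial representation of $\SL_d(\R)$, $\pi_0(w)$ need not vanish there. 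The paper's proof skips this by asserting $\pi_0(w)=0$ for all $i>1$.

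The gap is that you never actually handle $i=d$, and both routes you suggest rest on a false principle. Each needs non-Liouville to give a lower bound for the dual linear form $|a\cdot v-b|$, $(a,b)\in\Z^d\times\Z$. Khintchine transference only goes the other way: a finite simultaneous exponent does not bound the linear-form exponent. For the vectors $v=(\xi,0)$ used in the application, $a=e_d$, $b=0$ gives $a\cdot v-b=0$. Likewise $(\sqrt2,\sqrt2)$ is non-Liouville with $v_1-v_2=0$. So ``contradicting non-Liouville'' is unavailable.

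What you need is elementary. Put $M=D_Bh$. Since $\star\circ\bigwedge^d M=M^{-T}\circ\star$ and $\star$ maps $\bigwedge^d\Z^{d+1}$ isometrically onto $\Z^{d+1}$, each $d$-monomial $w$ of $\tilde y$ corresponds to
\[
M^{-T}(a,b)=\bigl((\e_0^{-1}B)^{-1/d}g^{-T}a,\ \e_0^{-1}B\,(b-v\cdot a)\bigr),\qquad (a,b)\in\Z^{d+1}\setminus\{0\}.
\]
Up to sign, the first block is $\pi_{\eta_{d-1}}(w)$ and the second is $\pi_0(w)$. If $a=0$, then $\|\pi_0(w)\|\ge \e_0^{-1}B>\e_0^{\delta_d}$, so $\varphi_{\e_0}(w)=0$. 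If $a\neq 0$, then $\|\pi_{\eta_{d-1}}(w)\|\gg_g B^{-1/d}$, hence $\varphi_{\e_0}(w)\ll_g B^{1/(d\,\delta_{\eta_{d-1}})}$. Thus a tiny nontrivial component in degree $d$ is impossible outright; it is not a ``rational relation near $v$''. Your first instinct, that the non-Liouville hypothesis enters only for $i=1$, was correct.
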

\begin{proof} 
   Fix $y= [g,gv]\tilde{\Gamma} \in \tmX$ with $v \notin \Liou(\R^d)$. Throughout this proof, we fix a coset representative $[g,gv]\tilde{\Gamma}$ of $y$, chosen so that $\|g\|$ is minimal. 
   
   \noindent Since $v$ is not Liouville, there exists $E>0$ such that the inequalities 
   $$
   \|q v +p\| \leq \frac{1}{T^E}, \quad 1 \leq q \leq T,
   $$
   have no integral solution $(p,q) \in \Z^d \times \N$.  
   
   Using~\cite[Lem.~4.1]{Shi20}, fix $C>0$ such that for all $z \in \Y$,  
   $$
   C^{-1} \alpha_{\e_0}^\vartheta(z) \leq \alpha_{\e_0}^\vartheta(gz) \leq C\alpha_{\e_0}^\vartheta(z).
   $$
   
\noindent Thus,  
$$
\alpha_{\e_0}^\vartheta(\tilde{y}) \leq C \alpha_{\e_0}^\vartheta(\Lambda_v),
$$
 where
    $$
    \Lambda_v := \begin{pmatrix}
 (\e_0^{-1} B)^{1/d} I_d   \\ & \e_0 B^{-1}
\end{pmatrix} \begin{pmatrix}
    I_d & v \\ & 1
\end{pmatrix} \SL_{d+1}(\Z) \in \Y.
    $$
Hence it is enough to show that $\alpha_{\e_0}^\vartheta(\Lambda_v) \ll B^{\beta}$,  
   
\noindent To prove this, we claim that for every nonzero vector $w \in \Lambda_v$  
    \begin{align}
        \label{eq:temp:3.1}
        \|w\|  \gg B^{-E} \quad \text{ and } \quad  \varphi_{\e_0}(w) \leq B^{\frac{E}{\delta_{\eta_1}}},
    \end{align}
    where we identify $\Lambda_v$ with corresponding unimodular lattice in $\R^{d+1}$.
    
    Indeed, any such $w$ can be written as 
    \[w=\left((\e_0^{-1} B)^{1/d} (p+qv),\e_0B^{-1} q\right),\] 
     for some $(p,q) \in \Z^d \times \Z$, with
     \[\pi_{\eta_1}(w)= (\e_0^{-1} B)^{1/d} (p+qv) \quad \text{and} \quad \pi_0(w)= \e_0B^{-1} q.\] Now, consider the following cases:
\begin{enumerate}
    \item[1.] $ {\|\pi_0(w)\| =0}$. In this case, we have $w= \left((\e_0^{-1} B)^{1/d} p,0\right)$ for some $p \in \Z^d \setminus \{0\}$. Thus $\|w\| = \|(\e_0^{-1} B)^{1/d} p\| \gg 1> B^{-E}$ and $\varphi_{\e_0}(w)\ll \|\pi_{\eta_1}(w)\|^{-1/\delta_{\eta_1}} \ll 1$.  
    \item[2.] $ {\|\pi_0(w)\| >\e_0}$. Then by definition \eqref{eq:def phi}, $\varphi_{\e_0}(w)=0$ and $\|w\| \geq \|\pi_0(w)\| >  \e_0 \gg B^{-E}$.
    \item[3.] $ {\|\pi_0(w)\| \in (0,\e_0) }$. Then $1 \leq |q| \leq B$. By the choice of $E$, $\|p+ q v\| \geq B^{-E}$. Hence $\|w\| \geq \|\pi_{\eta_1}(w)\| \geq (\e_0^{-1} B)^{1/d} B^{-E} \gg B^{1/d-E}\geq B^{-E}$ and $\varphi_{\e_0}(w) \ll \|\pi_{\eta_1}(w)\|^{-1/\delta_{\eta_1}} \leq B^{\frac{E}{\delta_{\eta_1}}}$.  
\end{enumerate}
Note that in all the above three cases, the implied constants can be chosen independent of $B$.

\noindent Now using~\eqref{eq:temp:3.1}, we get the following:
    \begin{enumerate}
        \item[1.] If $w \in \R^{d+1}$ is a non-zero $\Lambda_v$-monomial, then 
        $$
        \varphi_{\e_0}(w) \ll B^{\frac{E}{\delta_{\eta_1}}}.
        $$
        \item[2.] If $w \in \bigwedge^i \R^{d+1}$ is a non-zero $\Lambda_v$-monomial for $i>1$, then $w= w_1 \wedge \cdots \wedge w_i$ for some $w_1, \ldots, w_i \in \Lambda_v$. Let $\Lambda'_w$ be the sublattice of $\Lambda_v$ generated by $w_1, \ldots, w_i$. By Minkowski's second theorem, there exists $w_1', \ldots, w_i' \in \Lambda_w'$ such that 
        \begin{align}
            \label{eq:temp:3.2}
            \text{vol}(\Span(\Lambda_w')/\Lambda_w') \gg \|w_1'\|\cdots \|w_i'\| \gg B^{-iE},
        \end{align}
        where the last inequality follows from~\eqref{eq:temp:3.1}.
        
        Note that since $\|w\|$ equals $ \text{vol}(\Span(\Lambda_w')/\Lambda_w')$ upto a bounded constant, we get that
        \begin{align}
            \label{eq:temp:3.3}
            \|w\| \gg B^{-iE}.
        \end{align}
        Furthermore, for $i>1$, we have $\pi_0(w)=0$, and hence
        \begin{align}
            \varphi_{\e_0}(w) \ll \max \left\{ B^{iE/\delta_{\eta_i}}, B^{iE/\delta_{\eta_{i-1}}} \right\}. 
        \end{align}
    \end{enumerate}

    \noindent From the above two cases, it is clear that there exists $\beta>0$, depending only on $y$ (and independent of $B$), such that
    $$
    \alpha_{\e_0}^\vartheta(\Lambda_v)= \max \varphi_{\e_0}^\vartheta(w) \ll B^\beta.
    $$
\end{proof}

\begin{proof}[Proof of Proposition~\ref{main prop}]
    Let $\theta \in S(L,A,B)$. Then the lattice 
    $$
    \begin{pmatrix}
        a_t u(\theta) & \\& 1
    \end{pmatrix} \begin{pmatrix}
        g & gv \\ &1
    \end{pmatrix}\Z^{d+1}
    $$
    contains a vector in the region $\cB(A,B)$, say $w$. 
    
    Define
    $$
    v:= \begin{pmatrix}
 (\e_0^{-1} B)^{1/d} I_d   \\ & \e_0 B^{-1}
\end{pmatrix}w 
    $$
   Then $v$ is a monomial in $a_tu(\theta)\tilde{y}$, and satisfies \[\pi_0(v) \leq \e_0 \text{ and } \pi_{\eta_1}(v) \leq \e_0^{-1} B^{1/d}A.\] 
   This implies that
    $$
\alpha_{\e_0}^\vartheta(a_tu(\theta)\tilde{y}) \geq C_2 (A^dB)^{-\kappa},
    $$
    for some $\kappa, C_2>0$, where both $\kappa,$ independent of $y$ and $B$.
    
Hence
    \begin{align}
    \label{eq:temp:2.1}
      |S(L,A,B)| \leq   \frac{(A^dB)^{\kappa}}{C_2 } \int_{\mathrm{M}_{m \times n}([-L,L])} \alpha_\e^\vartheta(a_t u(\theta) \tilde{y}) \, d\theta.
    \end{align}
    By Corollary~\ref{cor: shi},  
    \begin{align}
    \label{eq:temp:2.2}
         \int_{\mathrm{M}_{m \times n}([-L,L])} \alpha_\e^\vartheta(a_t u(\theta) \tilde{y}) \, d\theta \leq C_0 \alpha_\e^\vartheta( \tilde{y}) + C_1,
    \end{align}
    and by Lemma~\ref{lem:shi imp 1} 
    \begin{align}
    \label{eq:temp:2.3}
        \alpha_\e^\vartheta( \tilde{y}) \ll B^\beta.
    \end{align}
Combining \eqref{eq:temp:2.1}, \eqref{eq:temp:2.2}, and \eqref{eq:temp:2.3}, we obtain the desired bound, completing the proof.
\end{proof}

\section{Generic sets}\label{subsec:gen}

Let $v \in \T^d$ and $T>0$. Following Kim \cite{Kim2024}, we define
\[
\zeta(v, T) := \min \left\{ N \in \mathbb{N} : \min_{1 \le q \le N} \| q v \|_{\mathbb{Z}} \le \frac{N^2}{T} \right\},
\]
where $\|\cdot\|_{\mathbb{Z}}$ denotes the supremum distance to $0 \in \mathbb{T}^d$.

For $g := (g_{ij}) \in \SL_d(\mathbb{R})$, set
\[
\|g\| := \max_{1 \le i,j \le d} \big( |g_{ij}|, |(g^{-1})_{ij}| \big).
\]

Given $\kappa > 0$, define
\[
\ccG(\kappa,T,\varepsilon) := \left\{ y \in \tmX : \exists \text{ a representative } y = [h,hb]\tilde{\Gamma} \text{ such that } \|h\| \le \kappa \, \zeta(b,T)^{\delta_1}, \ \zeta(b,T)^{\delta_1} \ge T^\varepsilon \right\},
\]
where $\delta_1 > 0$ is as in Theorem~\ref{thm:kim}.

\medskip

The main goal of this section is to prove the following proposition.
\begin{proposition}
\label{lem:imp stop time}
Let $T \geq 1$ and $\kappa>0$ be given. Suppose $y \in \tmX$ with $y=[g, gv] \tilde{\Gamma}$, where $v \notin \Liou(\R^d)$. Then there exists $\e>0$ and $\delta_2, \delta_3>0$, depending on $y$, such that for all $t \geq 0$, 
$$  \left|\left\{\theta \in \mathrm{M}_{m \times n}([-L,L]): a_tu(\theta) y  \notin \ccG(\kappa,T,\e)  \right\}\right|   \;\ll \;   T^{-\delta_2}+ e^{-\delta_3 t},$$
where the implied constant depends on $y$, $\kappa$, and $L$, but is independent of $t$ and $T$.
\end{proposition}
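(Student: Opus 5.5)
Write $y_\theta := a_tu(\theta)y$. It has the natural representative $y_\theta=[h_\theta,h_\theta v]\tilde\Gamma$ with $h_\theta=a_tu(\theta)g$, so the shift $v$ is unchanged. The plan is to show that a point fails to lie in $\ccG(\kappa,T,\e)$ for only two reasons: either $\zeta(b,T)$ is too small for every representative, or $\|h\|$ is too large for every representative. Each failure will then be converted into the existence of a short vector in a thin cylinder, so that Proposition~\ref{main prop} applies. We may choose the representative freely, replacing $(h,b)$ by $(h\gamma, \gamma^{-1}(b+p))$ with $\gamma\in\SL_d(\Z)$ and $p\in\Z^d$. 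So for each $\theta$ we take $h$ to be the reduced basis of the lattice $h_\theta\Z^d$ (Minkowski/Siegel reduced), with $b=\gamma^{-1}v$ modulo $\Z^d$. Note that $\zeta(\gamma^{-1}v,T)$ is comparable to $\zeta(v,T)$ up to a power of $\|\gamma\|$. This is the bookkeeping issue: $\zeta$ is not literally invariant, so I would instead characterise both conditions in terms of the $(d+1)$-dimensional lattice $\Lambda_\theta:=\begin{pmatrix}h_\theta & h_\theta v\\ & 1\end{pmatrix}\Z^{d+1}$, whose vectors are $(h_\theta(p+qv),q)$, independent of the representative.

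\emph{Step 1: the condition on $\zeta$.} Suppose $\zeta(b,T)^{\delta_1}<T^{\e}$. Then there is $1\le q\le N:=T^{\e/\delta_1}$ with $\|qb\|_\Z\le N^2/T$. Hence $\Lambda_\theta$ contains the vector $(h(p+qb),q)$ with $|q|\le N$ and $\|h(p+qb)\|\le \|h\|\,N^2/T$. If in addition $\|h\|\le T^{\e'}$, this vector lies in $\mathcal B(A,B)$ with $A=T^{-1+2\e/\delta_1+\e'}$ and $B=T^{\e/\delta_1}$. Proposition~\ref{main prop} then bounds the measure of such $\theta$ by $\beta_0A^{\beta_1}B^{\beta_2}=\beta_0T^{-\beta_1+O(\e+\e')}$. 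This is $\le T^{-\delta_2}$ once $\e,\e'$ are chosen small relative to $\beta_1,\beta_2$. The order of choices is important: $\beta_i$ depend only on $y$ and $L$, and $\e$ is chosen afterwards.

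\emph{Step 2: the condition on $\|h\|$.} A large norm for the reduced $h$ means that the lattice $h_\theta\Z^d$ has a short vector, i.e.\ $y_\theta$ is near the cusp of $\SL_d(\R)/\SL_d(\Z)$. Such a short vector is a vector $(h_\theta p,0)\in\Lambda_\theta$, which lies in $\mathcal B(A,1)$ with $A\approx\|h\|^{-1/(d-1)}$. We need $\|h\|\le\kappa\zeta^{\delta_1}$, and since $\zeta\ge T^{\e}$ after Step 1, it suffices that $\|h\|\le \kappa T^{\e\delta_1}$. Therefore the bad set is contained in the set of $\theta$ for which $\Lambda_\theta$ has a nonzero vector in $\mathcal B(c\,T^{-\e\delta_1/(d-1)},1)$. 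Proposition~\ref{main prop} gives a bound $\ll T^{-\delta_2}$ for this set. Alternatively, the standard nondivergence estimate for expanding translates gives a bound of the form $T^{-\delta}+e^{-\delta t}$. That alternative is where an $e^{-\delta_3 t}$ term could enter, for instance when $t$ is small and $g$ itself has large norm; in that regime the bound is trivial after adjusting the constants, which depend on $y$. The same smallness bound also covers the auxiliary assumption $\|h\|\le T^{\e'}$ used in Step 1, by taking $\e'=\e\delta_1$.

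\emph{Main obstacle.} The delicate step is the comparison between $\zeta(b,T)$ for the reduced representative and a representative-free, lattice-theoretic condition. One has to check that an approximation $\|qb\|_\Z$ small produces a vector in $\Lambda_\theta$ whose first component has size controlled by $\|h\|$. The estimate $\|h(p+qb)\|\le d\|h\|\,\|p+qb\|$ does this, and it is exactly why Step 2 must bound $\|h\|$ polynomially in $T$ first. I would also double-check that Proposition~\ref{main prop} holds uniformly in $t\ge0$, which its statement implicitly asserts through Corollary~\ref{cor: shi}. With that uniformity the $e^{-\delta_3t}$ term may in fact be unnecessary, though it is harmless to keep it.
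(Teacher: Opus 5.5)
Your argument is correct, but it treats the cusp differently from the paper.

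\textbf{The paper's route.} The paper splits $\ccG(\kappa,T,\e)^c$ into two pieces, with $M=T^{\e}$:
\begin{itemize}
\item $K_1^M$, where every representative has $\|h\|\ge M$;
\item $K_2^M$, where some representative has $\|h\|\le M$ and $\zeta(b,T)$ is small.
\end{itemize}
Only $K_2^M$ is fed into Proposition~\ref{main prop}, exactly as in your Step~1. The cusp set $K_1^M$ is bounded separately. The paper applies the Kleinbock--Margulis effective equidistribution on $\SL_d(\R)/\SL_d(\Z)$ (Theorem~\ref{thm:KM}) to a smoothed cusp indicator, and combines this with $\mu_{\X}(\mathcal U_M)\ll M^{-1}$. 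That step is the only source of the $e^{-\delta_3 t}$ term.

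\textbf{Your route.} You fix a reduced representative and use $\|h\|\ll\lambda_1(h_\theta\Z^d)^{-(d-1)}$. A short vector $h_\theta p$ then gives the vector $(h_\theta p,0)\in\Lambda_\theta$, whose last coordinate is zero. Proposition~\ref{main prop} with bounded $B$ controls this case as well. Its proof (Shi's height function plus Corollary~\ref{cor: shi}) places no restriction on the last coordinate and is uniform in $t$. This is legitimate, and it gives you:
\begin{itemize}
\item a single tool for both failure modes;
\item the stronger bound $T^{-\delta_2}$, with no $e^{-\delta_3 t}$ term;
\item no need for the non-Liouville hypothesis in the cusp part, since for bounded $B$ the quantity $\alpha_{\e_0}^{\vartheta}(\tilde y)$ is just a constant.
\end{itemize}
The cost is the reduction-theory estimate for reduced bases, which the paper avoids by quantifying over all representatives.

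\textbf{A small slip.} The complement of your Step~1 condition is $\zeta(b,T)^{\delta_1}\ge T^{\e}$, not $\zeta\ge T^{\e}$. So the natural cusp threshold is $\|h\|>\min(\kappa,1)\,T^{\e}$ (taking $\e'=\e$), rather than $\kappa T^{\e\delta_1}$. This only changes exponents.
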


To prove Proposition \ref{lem:imp stop time}, we introduce, for $M > 0$, the sets
\begin{align*}
K_1^M 
&:= \left\{ y \in \tilde{\X} : \|h\| \geq M \text{ for every coset representative } y = [h, hb]\tilde{\Gamma} \right\}, \\
K_2^M 
&:= \left\{ y \in \tilde{\X} : \exists \text{ a coset representative } y = [h, hb]\tilde{\Gamma} \text{ such that } \|h\| \leq M \text{ and } \kappa\, \zeta(b,T)^{\delta_1} \leq M \right\}.
\end{align*}

We will show that, for $M = T^\varepsilon$, the set $\ccG(\kappa, T, \varepsilon)^c$ is contained in $K_1^M \cup K_2^M$. Consequently, it suffices to estimate the measure of $K_1^M$ and $K_2^M$.

\medskip

\subsection{Estimate on $K_1^M$}
\begin{lemma}
    \label{appendix a: main lemma}
   Let $L>0$ be given. Then there exists a $\delta>0$ such that the following holds for all $t>0$ and $y \in \tmX$,
   \begin{align*}
        \left|\{\theta \in \mathrm{M}_{m \times n}([-L,L]): a_tu(\theta) y  \in K_1^M  \} \right| \ll  M^{-1}+ e^{-\delta t},
    \end{align*}
     where the implied constant depends on $y$ and $L$, but is independent of $t$.
\end{lemma}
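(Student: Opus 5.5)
We reduce the statement to a non-divergence estimate on the space of unimodular lattices $\X := \SL_d(\R)/\SL_d(\Z)$, and then use the height function of Corollary~\ref{cor: shi} applied in dimension $d$ (not $d+1$).

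\textbf{Reduction to $\X$.} The natural projection $\pi:\tmX\to\X$, $[h,hb]\tilde\Gamma\mapsto h\SL_d(\Z)$, is equivariant for $a_tu(\theta)$. By definition, $y'\in K_1^M$ means every representative $h$ of $\pi(y')$ has $\|h\|\ge M$. By reduction theory (Siegel sets), a lattice $h\Z^d$ admits a representative with $\|h\|\ll \lambda_1(h\Z^d)^{-(d-1)}$. Here $\lambda_1$ is the first minimum, and we use that the successive minima are at most $\lambda_1^{-(d-1)}$ up to constants, by Minkowski's second theorem. Hence $y'\in K_1^M$ forces
\[
\mathrm{ht}(y')=\lambda_1^{-1}\gg M^{1/(d-1)}.
\]
We could equally use a cruder polynomial relation, which would only change the exponent of $M$. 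The same bound applies to the dual lattice, which contributes the entries of $h^{-1}$. Thus it suffices to bound the measure of $\{\theta:\alpha(a_tu(\theta)\pi(y))\ge cM^{\kappa'}\}$, where $\alpha$ is a height function on $\X$ dominating some positive power of both $\lambda_1^{-1}$ and $\lambda_1(\text{dual})^{-1}$.

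\textbf{Height function estimate.} Use Shi's height function $\alpha_{\e_0}$ for the action of $\SL_d(\R)$ on $\X$ itself; equivalently, apply the Eskin--Margulis--Mozes / Kleinbock--Margulis construction. It satisfies the analogue of Corollary~\ref{cor: shi}:
\[
\int_{\mathrm{M}_{m\times n}([-L,L])}\alpha^\vartheta(a_tu(\theta)x)\,d\theta\le C_0\,\alpha^\vartheta(x)+C_1 .
\]
Moreover, on full monomials $\alpha$ controls $\lambda_1^{-1}$ and the covolumes of primitive sublattices, and hence also the dual minimum. Markov's inequality then gives
\[
|\{\theta: a_tu(\theta)y\in K_1^M\}|\ll_{y,L} M^{-\vartheta\kappa'},
\]
which is a power of $M^{-1}$ with no dependence on $t$.

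\textbf{Main obstacle.} The statement asks for $M^{-1}$ exactly, whereas the argument above yields $M^{-\delta'}$. To get $M^{-1}$ sharply, I would instead use the sharper form in which the height function decays like $c\,\alpha$ after time $t$, up to an $e^{-\delta t}\alpha(x)$ term. This is the mechanism that produces the $e^{-\delta t}$ term: the stationary bound is $\int\alpha\le e^{-\delta t}\alpha(x)+C$. Combined with the fact that $\alpha(x)\asymp\|h\|^{1}$ for a suitably normalized $\alpha$ (taking $\vartheta$ so that $\alpha^\vartheta\ge \|h\|_{\min}$, which is possible for $\vartheta$ small because the Kleinbock--Margulis function has $\lambda_1^{-\beta}$ integrable for $\beta<d$), this gives the stated bound. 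If exactly $M^{-1}$ cannot be obtained, any $M^{-\delta}$ suffices downstream, since $M=T^\e$. The delicate point is matching the exponent of $\|h\|$ against an integrable power of the height while keeping the contraction constant below one; this is where the proof will need care.
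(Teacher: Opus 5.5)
There is a genuine gap: your argument proves a weaker estimate than the lemma. The reduction $K_1^M\subset\{\lambda_1^{-1}\gg M^{1/(d-1)}\}$, combined with Markov's inequality for a Shi/EMM height function, gives $|\{\theta: a_tu(\theta)y\in K_1^M\}|\ll_{y,L}M^{-\delta'}$ for some small $\delta'>0$ determined by $\vartheta$. This does not imply $\ll M^{-1}+e^{-\delta t}$: fix $M$ and let $t\to\infty$.

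Your proposed upgrade to $M^{-1}$ fails as written. Via Markov you would need a function $\psi\gg\|h\|_{\min}$ with $\sup_t\int_{\mathrm{M}_{m\times n}([-L,L])}\psi(a_tu(\theta)x)\,d\theta<\infty$. Since $\|h\|_{\min}\asymp\max\{\lambda_1(\Lambda)^{-1},\lambda_1(\Lambda^*)^{-1}\}\ge\lambda_1(\Lambda)^{-1}$, any such $\psi$ must grow at least like $\lambda_1^{-1}$, so you need the Lyapunov inequality at exponent at least $1$. Corollary~\ref{cor: shi} only provides it for $\alpha_{\e_0}^\vartheta$ with Shi's small $\vartheta$, which dominates only a small power of $\lambda_1^{-1}$. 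So your claim that a small $\vartheta$ makes $\alpha^\vartheta\ge\|h\|_{\min}$ possible is backwards. Nor does $L^p$-integrability of $\lambda_1^{-1}$ for Haar measure say anything about uniform-in-$t$ averages along $a_tu(\theta)x$ from a fixed base point. Finally, $\int\alpha\le e^{-\delta t}\alpha(x)+C$ is not where the $e^{-\delta t}$ term comes from: after Markov, that term is divided by $M$.

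The missing idea is to transfer the Haar measure of the cusp to the translated orbit via effective equidistribution rather than a Lyapunov function. The paper argues as follows.
\begin{itemize}
\item It uses $\mu_{\X}(\mathcal U_M)\ll M^{-1}$, from Kim's fundamental-domain estimate.
\item It takes the smoothed cutoff $f_M=\varphi*(1-\ind_{\mathcal U_{\beta^{-1}M}})$, with $\varphi$ a bump near the identity. Then $1-f_M\ge\ind_{\mathcal U_M}$, $f_M\in C_c^\infty(\X)$, and $\|f_M\|_{C^k}$ is bounded independently of $M$ because all derivatives fall on $\varphi$.
\item Applying Theorem~\ref{thm:KM} to $f_M$ gives $\int_{\mathrm{M}_{m\times n}([-L,L])}(1-f_M)(a_tu(\theta)x)\,d\theta=(2L)^{mn}\mu_{\X}(\mathcal U_{\beta^{-1}M})+O(e^{-\delta t})$, which is exactly $M^{-1}+e^{-\delta t}$.
\end{itemize}

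Your remark that a $t$-uniform bound $M^{-\delta'}$ would suffice for Proposition~\ref{lem:imp stop time} (since $M=T^\e$) is correct. Your route therefore yields a variant that is usable for the paper's purposes, but it does not prove the lemma as stated.
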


The proof of Lemma~\ref{appendix a: main lemma} is an immediate consequence of the effective equidistribution theorem of Kleinbock and Margulis~\cite{KM2}. We begin by introducing some notation.

Let $\X := \SL_d(\R)/\SL_d(\Z)$, and let $\mu_{\X}$ denote the unique $\SL_d(\R)$-invariant probability measure on $\X$. Define the projection map $\pi : \tilde{\X} \to \X$ by
\[
[g, gv]\tilde{\Gamma} \mapsto g\Gamma.
\]

For $M > 0$, define the cusp neighborhood
\begin{equation}\label{equ:cusp_M}
\mathcal{U}_M
:=
\left\{
x \in \X :
\|g\| \ge M \text{ for every } g \in \SL_d(\R) \text{ satisfying } x = g\Gamma
\right\}.
\end{equation}

\begin{lemma}\label{lem:cusp_M}
There exists a constant $C>0$ such that for all $M>0$, 
\[
\mu_{\X}(\mathcal{U}_M) \leq C M^{-1}.
\]
\end{lemma}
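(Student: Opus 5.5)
The plan is to bound $\mu_{\X}(\mathcal{U}_M)$ through the reduction theory of $\SL_d(\Z)$, that is, through Siegel sets. We may assume $M$ is large, since for $M$ bounded the claim holds trivially after enlarging $C$ (because $\mu_{\X}\le 1$).

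\textbf{Step 1: reduce to a condition on the lattice.} For $x=g\Gamma$, the minimal norm $\min_{\gamma}\|g\gamma\|$ is comparable to a power of the height. Choose $g$ in a Siegel domain $\mathfrak{S}=K A_{\tau} N_{\omega}$. Then $\|g\|\asymp\|a\|$, where $a=\diag(a_1,\dots,a_d)$ is the diagonal part, with $a_i/a_{i+1}\le \tau$. Moreover $\|g^{-1}\|\asymp\|a^{-1}\|$, and every entry of $a^{\pm1}$ is bounded by a power of $\lambda_1(g\Z^d)^{-1}$, where $\lambda_1$ denotes the first successive minimum. More directly, Minkowski's successive minima give $\lambda_1\cdots\lambda_d\asymp 1$ and $\lambda_1\le\lambda_2\le\cdots\le\lambda_d$, so $\lambda_d\ll\lambda_1^{-(d-1)}$. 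A reduced basis realizes the minima up to constants, so the matrix $h$ with columns given by this basis satisfies
\[
\|h\|\ll \lambda_1^{-(d-1)}\ \text{(columns)}, \qquad \|h^{-1}\|\ll \max_i \prod_{j\ne i}\lambda_j\ll \lambda_1^{-1}.
\]
Hence $\min_\gamma\|g\gamma\|\ll \lambda_1(x)^{-(d-1)}$. It follows that
\[
\mathcal{U}_M\subset\{x:\lambda_1(x)\le c M^{-1/(d-1)}\}.
\]

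\textbf{Step 2: measure of the short-vector set.} By Siegel's mean value theorem applied to the indicator of the ball of radius $\epsilon$, we have $\mu_{\X}(\lambda_1\le\epsilon)\ll\epsilon^d$. With $\epsilon\asymp M^{-1/(d-1)}$ this yields $\mu_{\X}(\mathcal{U}_M)\ll M^{-d/(d-1)}\le M^{-1}$ for $M\ge1$. For $d=2$ (and in general) this exponent is at least $1$, which gives the claim.

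\textbf{Main obstacle.} The hardest step is Step 1: the norm $\|g\|$ includes the entries of $g^{-1}$, so one must control both $g$ and $g^{-1}$ for a reduced representative. This control is what the reduced-basis and dual-lattice estimates above provide. If the power counting turns out slightly off, it still suffices to obtain $\mathcal{U}_M\subset\{\lambda_1\le cM^{-1/d}\}$, since Siegel's bound $\epsilon^d$ then gives exactly $M^{-1}$.
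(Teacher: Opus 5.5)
Your argument is correct, but it takes a different route from the paper. The paper's proof is essentially a citation. It observes that $\mathcal{U}_M$ is contained in the image of $\{g\in\mathcal{F}:\|g\|\ge M\}$ for Kim's fundamental domain $\mathcal{F}$, and it quotes Kim's estimate (2.16) for the measure of that set.

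You instead prove the cusp estimate from scratch. Take a reduced basis, flipping the sign of one vector if needed so that the matrix $h$ lies in $\SL_d(\R)$ and represents the same coset. Its entries are bounded by $\lambda_d\ll\lambda_1^{-(d-1)}$. By Cramer's rule and Hadamard's inequality, the entries of $h^{-1}$ are bounded by $\lambda_1^{-1}\ll\lambda_1^{-(d-1)}$. Then Siegel's mean value theorem bounds $\mu_{\X}(\lambda_1\le\epsilon)$ by a constant times $\epsilon^d$.

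Your version buys a self-contained argument that does not depend on any particular fundamental domain. It also gives the stronger bound $M^{-d/(d-1)}$. Using transference, $\lambda_d(\Lambda)\asymp\lambda_1(\Lambda^*)^{-1}$, together with the invariance of $\mu_{\X}$ under $\Lambda\mapsto\Lambda^*$, you could even get $M^{-d}$. The paper's route buys brevity. It also keeps the conventions aligned with Kim's paper, whose effective equidistribution theorem is applied later with the same norm $\|g\|$.
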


\begin{proof}
Let $\mathcal{F}$ denote the fundamental domain for $\X$ defined as in \cite[\S2.7 and Appendix A]{Kim2024}. By definition,
\[
\mathcal{U}_M \subset \{ g\Gamma : g \in \mathcal{F},\ \|g\| \ge M \}.
\]
Therefore,
\[
\mu_{\X}(\mathcal{U}_M)
\le
\mu_{\X}(\{ g\Gamma : g \in \mathcal{F},\ \|g\| \ge M \})
\ll M^{-1},
\]
where the last inequality follows from \cite[Equation (2.16)]{Kim2024}. This completes the proof.
\end{proof}

\begin{theorem}[{\cite[Prop.~2.4.8]{KM2}}]\label{thm:KM}  
 Let $L>1.$ Then there exist constants  $\delta>0$ and $k \in \mathbb N$ such that 
for every $x\in \mathcal{X}$, every $f\in C_c^\infty(\mathcal{X})$, and every $t>0$, one has
\[
\frac{1}{ (2L)^{mn}}\int_{\mathrm{M}_{m \times n}([-L,L])
} f(a_t u(\theta) x)\,d\theta
\;=\;
 \int_{\mathcal{X}} f \,d\mu_{\mathcal{X}}
+\,
O_{d,x}\left(
   e^{-\delta t}\,\|f\|_{C^k}
\right).
\]
\end{theorem}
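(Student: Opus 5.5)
The plan is to use the standard ``thickening'' (Margulis banana) argument, which deduces effective equidistribution of expanding translates from effective exponential mixing of the $a_t$-action on $\X$. The key structural input is that, since all weights $w_i>0$, the group $U:=\{u(\theta)\}$ is exactly the expanding horospherical subgroup of $a_t$ in $\SL_d(\R)$. Write $P^-$ for the weakly contracting subgroup, namely block lower triangular matrices: the centralizer of $a_t$ (block diagonal, or smaller when the weights are distinct) times the contracting lower-left block. Then the product map $P^-\times U\to \SL_d(\R)$ is a diffeomorphism near the identity. For $g\in P^-$ in a ball $B_r$ of radius $r$, we have $a_t g a_t^{-1}\in B_{Cr}$ uniformly in $t\ge0$.

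\emph{Step 1 (thickening).} Fix $x=g_0\Gamma$, and let $r_0=r_0(x)>0$ be an injectivity radius at the compact set $\{u(\theta)x:\theta\in\mathrm{M}_{m\times n}([-L,L])\}$; this is the only source of the $x$-dependence. For $0<r<r_0$, choose a bump $\rho_r\ge0$ on $P^-$ supported in $B_r$, with $\int\rho_r=1$ and $\|\rho_r\|_{C^k}\ll r^{-N}$. Replace the indicator of $\mathrm{M}_{m\times n}([-L,L])$ by a smooth cutoff $\chi_r$ that agrees with it except on an $r$-neighbourhood of the boundary. This costs $O(r\|f\|_{C^0})$. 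Define $\psi(pu(\theta)x):=\rho_r(p)\chi_r(\theta)$, which is a well-defined smooth function on $\X$ by injectivity. Its Sobolev norms satisfy $\mathcal S(\psi)\ll_x r^{-N'}$, and $\int\psi\,d\mu_\X$ equals $(2L)^{mn}$ times the Haar normalization, up to $O(r)$, by the local product decomposition of Haar measure.

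\emph{Step 2 (comparison).} By the conjugation property,
\[
f(a_t p u(\theta)x)=f\bigl((a_tpa_t^{-1})\,a_tu(\theta)x\bigr)=f(a_tu(\theta)x)+O(r\|f\|_{C^1}).
\]
Integrating against $\rho_r(p)\chi_r(\theta)$, and absorbing the Jacobian of the local coordinates (which is $1+O(r)$), gives
\[
\int_\X f(a_t z)\psi(z)\,d\mu_\X(z)=c\int \chi_r(\theta)f(a_tu(\theta)x)\,d\theta+O(r\|f\|_{C^1}).
\]

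\emph{Step 3 (mixing and optimization).} Apply exponential mixing for $a_t$ on $\X$, which follows from the spectral gap of $L^2_0(\X)$ (property (T) for $d\ge3$, or Selberg/Kim--Sarnak bounds for $d=2$) together with the fact that $a_t\to\infty$ in every simple factor. This gives
\[
\langle f\circ a_t,\psi\rangle=\int f\int\psi+O\bigl(e^{-\lambda t}\mathcal S_k(f)\mathcal S_k(\psi)\bigr).
\]
Collecting the errors yields $O_x\bigl((r+e^{-\lambda t}r^{-N'})\|f\|_{C^k}\bigr)$, using that $\mathcal S_k(f)\ll\|f\|_{C^k}$ for compactly supported $f$, after adjusting norms. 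Choosing $r=e^{-\lambda t/(N'+1)}$ gives the claim with $\delta=\lambda/(N'+1)$.

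For small $t$, the statement is trivial after enlarging the implied constant. The main obstacle is Step 3: we need the mixing rate in a Sobolev norm compatible with $\|\cdot\|_{C^k}$, and we need the polynomial dependence $\mathcal S_k(\psi)\ll r^{-N'}$ uniformly. Both are standard, via the spectral gap and the estimates of \cite{KM2}. Everything else reduces to the geometric observation that $P^-$ is not expanded by $a_t$, which holds for arbitrary positive weights.
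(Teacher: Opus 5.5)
The paper gives no argument of its own for this statement; it imports it from Kleinbock--Margulis. Your thickening (banana) argument is the standard proof when $U$ is horospherical. As a proof of the statement used here, however, it has a genuine gap. The weights $w_1,\dots,w_{m+n}$ are arbitrary, and your key structural claim fails as soon as the weights are not constant on each block. That claim is that positivity of the weights makes $U=\{u(\theta)\}$ the full expanding horospherical subgroup $H^+$ of $a_t$. In fact, conjugation by $a_t$ also expands the intra-block root directions $E_{ij}$ with $i,j\le m$, $w_i>w_j$ (and with $i,j>m$, $w_i<w_j$). For example, for $m=2$, $n=1$ and $a_t=\diag(e^{2t},e^{t},e^{-3t})$, one has $a_t(I+sE_{12})a_t^{-1}=I+e^{t}sE_{12}$. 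Hence $H^+=V\ltimes U$ with $V\neq\{e\}$, and the weakly contracting subgroup has dimension $\dim G-\dim H^+<\dim G-\dim U$. So no complement to $U$ is non-expanded, and both readings of your $P^-$ break:
\begin{itemize}
\item If $P^-$ is the centralizer times the lower-left block, the map $P^-\times U\to\SL_d(\R)$ is not a local diffeomorphism (in the example, $4+2<8$). Your $\psi$ is then supported on a null set, and mixing cannot be applied to it.
\item If $P^-$ is the full block lower triangular group, the bound $a_tpa_t^{-1}\in B_{Cr}$ in Step 2 fails. The intra-block part is stretched by up to $e^{\beta_0 t}$, where $\beta_0=\max|w_i-w_j|$ over pairs in the same block.
\end{itemize}
Compensating by thickening at scale $r\le e^{-\beta_0 t}$ in those directions gives a mixing error $e^{-\lambda t}r^{-N'}\ge e^{(N'\beta_0-\lambda)t}$, which need not decay. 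Deducing the $U$-statement from the $H^+$-statement by smoothing along $V$ loses in exactly the same way. So your argument proves the claim only when $w_1=\dots=w_m$ and $w_{m+1}=\dots=w_{m+n}$ (e.g.\ $m=n=1$). The weighted case needs an additional ingredient for expanding but non-horospherical subgroups. This is what Kleinbock--Margulis' later paper on effective equidistribution of expanding translates of certain orbits in the space of lattices supplies, and it is missing from your proposal.

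There is a second, fixable error. An injectivity radius along $\{u(\theta)x\}$ does not make $(p,\theta)\mapsto pu(\theta)x$ injective on $B_r\times\mathrm{M}_{m\times n}([-L,L])$. For $x=\SL_d(\Z)$ and $L>1$, one has $u(\theta+k)x=u(\theta)x$ for every integer matrix $k$. So $\psi$ is not well defined, and the identity in Step 2 miscounts multiplicities. The fix is to define the thickened function on $\SL_d(\R)$ and periodize it over $\SL_d(\Z)$, or to split the box by a partition of unity into pieces smaller than the injectivity radius. The resulting bounded multiplicity only affects the $x$-dependent constant.
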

\medskip

\begin{proof}[Proof of Lemma~\ref{appendix a: main lemma}]
Fix $y \in \tmX$ and $L>0$, and set $x= \pi(y)$. Then  
\begin{align}
    \label{eq:temp:5.1}
    \left|\{\theta \in \mathrm{M}_{m \times n}([-L,L]): a_tu(\theta) y  \in K_1^M  \} \right|= \left|\{\theta \in \mathrm{M}_{m \times n}([-L,L]): a_tu(\theta) x  \in \mathcal{U}_M  \} \right|,
\end{align}
which we now estimate.

\noindent To this end, fix $\varphi \in C_c^{\infty}(G)$ supported on a sufficiently small neighborhood of the identity such that
\[
\int_G \varphi(g)\,dm_G(g)=1
\quad\text{and}\quad
\|\varphi\|_{C^k}<\infty \ \text{for all } k\in\mathbb N,
\]
where $m_G$ denotes the Haar measure on $G$, normalized so that $m_G(\mathcal{F})=1.$

\noindent Let $\beta>1$ be such that for every $g\in \supp(\varphi)$ 
\begin{align} 
\mathcal{U}_{\beta M}
\subseteq
g^{-1}\cdot \mathcal{U}_M
\subseteq
\mathcal{U}_{\beta^{-1}M}.
\end{align}

\noindent Define
\[
f_M
:=
\varphi * \left(1-\ind_{\mathcal{U}_{\beta^{-1} M}}\right) = 1- \varphi * \ind_{\mathcal{U}_{\beta^{-1} M}}.
\]

Note that if $\theta \in \mathrm{M}_{m \times n}([-L,L])$ is such that $a_tu(\theta)x \in \mathcal{U}_M$. Then for all $g \in \supp(\varphi)$, 
\[g^{-1}a_tu(\theta)x \in \mathcal{U}_{\beta^{-1} M},\]  which implies 
$$
f_M(a_tu(\theta)x)= \int_G \varphi(g) \left( 1-\ind_{\mathcal{U}_{\beta^{-1}M}}(g^{-1}a_tu(\theta)x)  \right)\, dm_G(g) =0.
$$
Since $f_M \leq 1$, it follows that
\begin{align}
    \label{eq:temp:5.2}
     \left|\{\theta \in \mathrm{M}_{m \times n}([-L,L]): a_tu(\theta) x  \in \mathcal{U}_M  \} \right| \leq \int_{\mathrm{M}_{m \times n}([-L,L])} (1-f_M)(a_t u(\theta) x) \, d\theta.
\end{align}

Note that $f_M$ is a smooth function and $\|f_M\|_{C^k}$ is bounded by a constant independent of $M$ for any $k \in \mathbb N$. Moreover,
   \begin{align}
       \mu_{\X}(f_M) &= \int_\X \int_G \varphi(g) \left( 1- \ind_{ \mathcal{U}_{\beta^{-1} M}}(g^{-1} x) \right) \, dm_G(g) d\mu_\X(x) \nonumber \\
       &=  \int_G \varphi(g) \left( \int_\X \left( 1- \ind_{ \mathcal{U}_{\beta^{-1} M}}(g^{-1} x) \right) \, d\mu_\X(x) \right) dm_G(g) , \quad \text{using Fubini's theorem} \nonumber \\
       &= \int_G \varphi(g) \left( 1- \mu_{\X}( \mathcal{U}_{\beta^{-1} M}) \right) dm_G(g), \quad \text{using $\SL_d(\mathbb R)$-invariance of $\mu_{\X}$} \nonumber \\
       &= 1- \mu_{\X}( \mathcal{U}_{\beta^{-1} M}). \label{eq: a 3}
   \end{align}

   By Theorem~\ref{thm:KM}, there exists $\delta>0$ such that
   \begin{align}
       \frac{1}{(2L)^{mn}} \int_{ \mathrm{M}_{m \times n}([-L,L]) } f_{M}( a_{ t} u(\theta) x) \, d\theta &= \mu_{\X}(f_{M}) + O\left(
  e^{-\delta t }\,\|f_M\|_{C^k}
\right) \nonumber \\
&=  1- \mu_{\X}( \mathcal{U}_{\beta^{-1} M}) + O(e^{-\delta_0 t} ), \label{eq: a 5}
\end{align}
where the implied constant is independent of $M$.

\noindent The lemma now follows from \eqref{eq:temp:5.1},~\eqref{eq:temp:5.2}, \eqref{eq: a 5}, and Lemma~\ref{lem:cusp_M}.
\end{proof}

\subsection{Estimate on $K_2^M$}
\begin{lemma}
    \label{appendix a: main lemma 2}
   Let $L>0$ be given. Then there exists constants $c_1,c_2>0$ (depending only on $y$ and $L$) such that the following holds for all $t>0$ and $y \in \tmX$,
   \begin{align*}
        \left|\{\theta \in \mathrm{M}_{m \times n}([-L,L]): a_tu(\theta) y  \in K_2^M  \} \right| \ll  M^{c_1}T^{-c_2},
    \end{align*}
     where the implied constant depends on $y, \kappa,$ and $ L$, but is independent of $t$.
\end{lemma}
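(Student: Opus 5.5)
The plan is to reduce membership in $K_2^M$ to the existence of a short vector of a certain lattice in a thin cylinder, and then to invoke Proposition~\ref{main prop}.

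\textbf{Step 1: translate $K_2^M$ into a lattice condition.} Suppose $a_tu(\theta)y \in K_2^M$. Then there is a representative $a_tu(\theta)y=[h,hb]\tilde\Gamma$ with $\|h\|\le M$ and $\kappa\,\zeta(b,T)^{\delta_1}\le M$. Put $N:=\zeta(b,T)\le (M/\kappa)^{1/\delta_1}$. By the definition of $\zeta$ there exist an integer $1\le q\le N$ and $p\in\Z^d$ with
\[
\|qb+p\|\le N^2/T .
\]
Now consider the vector $w:=(h(qb+p),\,q)$. It lies in the lattice
\[
\begin{pmatrix} h & hb\\ & 1\end{pmatrix}\Z^{d+1}
=\begin{pmatrix} a_tu(\theta) & \\ & 1\end{pmatrix}\begin{pmatrix} g & gv\\ & 1\end{pmatrix}\Z^{d+1}.
\]
This equality holds because the two representatives of the same point of $\tilde{\X}$ differ by an element of $\tilde\Gamma$, and the map $[h,hb]\mapsto\begin{pmatrix}h&hb\\&1\end{pmatrix}$ sends $\tilde\Gamma$ into $\SL_{d+1}(\Z)$, which preserves $\Z^{d+1}$. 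Since $\|h\|\le M$, the vector $w$ satisfies
\[
\|h(qb+p)\|\le d\,M\,N^2/T, \qquad 0<|q|\le N .
\]
In particular $w$ is nonzero.

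\textbf{Step 2: apply Proposition~\ref{main prop}.} Set
\[
A:=d\,M\,(M/\kappa)^{2/\delta_1}\,T^{-1},\qquad B:=\max\bigl(2,(M/\kappa)^{1/\delta_1}\bigr).
\]
Step 1 shows $w\in\mathcal B(A,B)$, so
\[
\{\theta\in \mathrm{M}_{m\times n}([-L,L]) : a_tu(\theta)y\in K_2^M\}\subset S(L,A,B).
\]
If $A<1$, Proposition~\ref{main prop} gives
\[
|S(L,A,B)|\le \beta_0A^{\beta_1}B^{\beta_2}\ll_{\kappa} M^{\beta_1(1+2/\delta_1)+\beta_2/\delta_1}\,T^{-\beta_1}.
\]
The constants $\beta_i$ depend only on $y$ and $L$, and nothing depends on $t$, because the bound in Proposition~\ref{main prop} is uniform in $t$. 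If instead $A\ge1$, then $M^{1+2/\delta_1}\gg T$. The trivial bound $(2L)^{mn}$ is then at most $\ll M^{c_1}T^{-c_2}$ once we choose $c_2=\beta_1$ and $c_1\ge \beta_1(1+2/\delta_1)+\beta_2/\delta_1$. This gives the lemma with these choices of $c_1,c_2$.

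\textbf{Main obstacle.} The delicate point is the bookkeeping in Step 1. We must check that the lattice attached to the chosen representative $[h,hb]$ really coincides with the translate of the lattice of $y$. We must also check that the ambiguity of $b$ modulo $\Z^d$ (since $\zeta$ is defined on $\T^d$) is absorbed by $p$. Note as well that Proposition~\ref{main prop} as stated hides the implicit $t$; its proof via Corollary~\ref{cor: shi} is uniform in $t$, which is exactly what we need here. The remaining issue is to track the $\kappa$-dependence, which enters only through $B$ and $A$.
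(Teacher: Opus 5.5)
Your proposal is correct and follows the paper's proof. Both arguments turn $\|h\|\le M$ and $\zeta(b,T)\le (M/\kappa)^{1/\delta_1}$ into a nonzero vector $(h(qb+p),q)$ of the translated lattice lying in $\mathcal{B}(A,B)$ with $A\asymp_\kappa M^{1+2/\delta_1}/T$ and $B\asymp_\kappa M^{1/\delta_1}$, and then apply Proposition~\ref{main prop}; your extra care (the factor $d$, forcing $B>1$, the trivial case $A\ge 1$, and uniformity in $t$) fixes points the paper glosses over. The one thing to add is that $K_2^M=\emptyset$ when $M<\kappa$, since $\zeta\ge 1$; this is what justifies $B^{\beta_2}\ll_\kappa M^{\beta_2/\delta_1}$ for your choice $B=\max\bigl(2,(M/\kappa)^{1/\delta_1}\bigr)$.
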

\begin{proof}
    Suppose that $a_tu(\theta)y \in K_2^M$. Then there exists a coset representative $[h,hb]\tilde{\Gamma}$ of $a_tu(\theta)y$ such that $\|h\| \leq M$ and $\kappa \,\zeta(b,T)^{\delta_1} \leq M$. The second condition implies that
     $$
     \min\left\{ N \in \N: \min_{1 \leq q \leq N} \|qb\|_{\mathbb Z} \leq \frac{N^2}{T} \right\} \leq (\kappa^{-1}M)^{1/\delta_1}.
     $$
     Consequently, there exists a pair $(p,q) \in \Z^d \times \N$ such that
     \begin{align*}
         \|qb+p\| \leq  \frac{(\kappa^{-1}M)^{2/\delta_1}}{T}, \quad \text{and } 1 \leq q \leq (\kappa^{-1}M)^{1/\delta_1}.
     \end{align*}
     Together with the bound $\|h\|\leq M$, this implies that the lattice
     \begin{align*}
         \begin{pmatrix}
             h & hb \\ & 1
         \end{pmatrix} \Z^{d+1} = \begin{pmatrix}
             a_t u(\theta) \\ &1
         \end{pmatrix} \begin{pmatrix}
             g & gv \\& 1
         \end{pmatrix} \Z^{d+1}
     \end{align*}
     has a non-zero point in the region 
     \begin{align}
         \label{eq:temp:1.3}
          \left\{ (x,y) \in \R^d \times \R: \|x\| \leq \frac{(\kappa^{-1}M)^{2/\delta_1 +1}}{T} \text{ and } |y| \leq (\kappa^{-1}M)^{1/\delta_1} \right\}.
     \end{align}
     Therefore,
     \begin{align}
         \label{eq:temp:1.2}
         & \left\{\theta \in \mathrm{M}_{m \times n}([-L,L]): a_tu(\theta) y  \in K_2^M  \right\} \subset S(L,{(\kappa^{-1}M)^{2/\delta_1+1}}/{T}, (\kappa^{-1}M)^{1/\delta_1}),
     \end{align}
     where $S(\cdot)$ is defined as in Proposition~\ref{main prop}. Now applying the measure estimate for $S(\cdot)$ from Proposition~\ref{main prop}, we obtain
     \begin{align}
     \label{eq:temp:1.5}
           \left|\{\theta \in \mathrm{M}_{m \times n}([-L,L]): a_tu(\theta) y  \in K_2^M  \} \right| \leq  \beta_0 \left(\frac{(\kappa^{-1}M)^{2/\delta_1}}{T} \right)^{\beta_1} (\kappa^{-1}M)^{\beta_2/\delta_1}, 
     \end{align}
     for some constants $\beta_0, \beta_1, \beta_2$ depending only on $y$ and $L$. This proves the lemma.
\end{proof}

\subsection{Proof of Proposition~\ref{lem:imp stop time}}
\begin{proof}
 First, note that
 \begin{align}
        \label{eq:temp:1.0}
         \ccG(\kappa, T, \e)^c \subset K_1^M \cup K_2^M,
    \end{align}
provided we choose $M= T^\e$.

 Indeed, suppose $y \notin \ccG(\kappa, T,\e)$. Then either every coset representative $[h,hb]\tilde{\Gamma}$ of $y$ satisfy $\|h\| \geq M$, or there exists at least one coset representative $[h,hb]\tilde{\Gamma}$ of $y$  with $ \|h\| \leq M$. In the latter case, the definition forces  $ \kappa \,\zeta(b,T)^{\delta_1}<M$. Thus, in the first case, $y \in K_1^M$, while in the second case  $y \in K_2^M$. 
 
Therefore, the proposition follows directly from Lemma~\ref{appendix a: main lemma} and Lemma~\ref{appendix a: main lemma 2} by choosing   ${\e}$ sufficiently small in the relation $M= T^\e$. Note that the choice of $\e$ is independent of $T$ and $t$, although it depends on $y$ and $L$.   
\end{proof}

\section{Proof of Theorem~\ref{thm: main thm equidistribution}}\label{sec:eff_equ}

The key ingredient in the proof of Theorem~\ref{thm: main thm equidistribution} is the following result of Kim~\cite{Kim2024}. 

\begin{theorem}[{\cite[Thm.~5.2]{Kim2024}}]\label{thm:kim}
 Let $V$ be a fixed neighbourhood of $0$ in $\mathrm{M}_{m \times n}(\R)$ with smooth boundary and compact closure. Then there exist $\delta_1>0$ (depending only on $m$ and $n$) and $k \geq 1$ such that the following holds.

\noindent For every $f \in C_c^{\infty}(\tilde{\mathcal{X}})$, every $t \geq 0$, and every
\[
y=[g,gv] \tilde{\Gamma} \in \tilde{\mathcal{X}} \qquad \text{with} \qquad \|g\| \leq \zeta(v,e^{wt})^{\delta_1},
\]
we have
\begin{equation*}
\frac{1}{|V|}
\int_{V}
f(a_t u(\theta)y)\,d\theta
=
\mu_{\tilde{\mathcal{X}}}(f)
+
O_{d,V}\!\left(\mathcal{S}_k(f)\,\zeta(v,e^{wt})^{-\delta_1}\right),
\end{equation*}
where $w$ is defined as in \eqref{eq: def w}.
%The implied constant depends only on $m$, $n$, and $V$.
\end{theorem}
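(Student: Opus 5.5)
Since this result is quoted from \cite[Thm.~5.2]{Kim2024}, we would in practice invoke it directly. To reconstruct it, the plan is Strömbergsson's scheme: approximate the shift $v$ by a rational point, so that $y$ lies close to a periodic orbit of a congruence subgroup, and use equidistribution on that orbit with rate uniform in the level.

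\textbf{Step 1 (rational approximation).} Put $T=e^{wt}$ and $N=\zeta(v,T)$. By the definition of $\zeta$ there is $1\le q\le N$ with $\|qv\|_{\Z}\le N^2/T$. Write $v=p/q+\xi$ with $\|\xi\|\le N^2/(qT)$. Then $y=[g,gv]\tilde\Gamma$ equals $[I,g\xi]\cdot y_{p/q}$, where
\[
y_{p/q}=[g,gp/q]\tilde\Gamma .
\]
The point $y_{p/q}$ lies on the closed orbit $\SL_d(\R)[I,p/q]\tilde\Gamma$. This orbit is isomorphic to $\SL_d(\R)/\Gamma_1(q)$, for a congruence subgroup of level $q$.

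\textbf{Step 2 (equidistribution on the level-$q$ orbit).} On this orbit, $\theta\mapsto a_tu(\theta)y_{p/q}$ is an expanding horospherical translate in $\SL_d(\R)/\Gamma_1(q)$. Here we use the Kleinbock--Margulis mixing argument behind Theorem~\ref{thm:KM}, together with the uniform spectral gap for congruence quotients (property $(\tau)$). This should give:
\begin{enumerate}
\item the average of $f$ over $V$ approaches the Haar average on that orbit with error $\ll q^{A}\|g\|^{A}e^{-\delta t}\mathcal S_k(f)$;
\item the Haar average on the orbit differs from $\mu_{\tilde\X}(f)$ by $\ll q^{-\delta'}\mathcal S_k(f)$, obtained by Fourier expansion in the torus fibre, since the orbit meets each fibre in the $q$-torsion points.
\end{enumerate}

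\textbf{Step 3 (removing the perturbation).} We have
\[
a_tu(\theta)[I,g\xi]=[I,a_tu(\theta)g\xi]\,a_tu(\theta).
\]
The translation vector has size $\ll e^{2wt}\|g\|\,\|\xi\|$ after accounting for the largest expansion. This is not small, so we cannot simply use a Lipschitz estimate. Instead we treat $\xi$ exactly as in Step 2: the relevant displacement is $qa_tu(\theta)g\xi$ measured against the level-$q$ lattice. The minimality in the definition of $\zeta$ then gives a dichotomy between two regimes:
\begin{enumerate}
\item $N^2/T$ is small, so the displacement is harmless;
\item no good approximation with $q<N$ exists, in which case $q$ is large and item 2 of Step 2 already gives the decay.
\end{enumerate}

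\textbf{Step 4 (collecting errors).} The errors are $q^{-\delta'}$, $q^{A}\|g\|^Ae^{-\delta t}$, and the perturbation term. Under the hypothesis $\|g\|\le N^{\delta_1}$, and since $N\le T^{1/2}$ trivially, a small choice of $\delta_1$ makes all three bounded by $N^{-\delta_1}$.

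\textbf{Main obstacle.} The crux is Step 3 together with item 2 of Step 2. One must show that either $q\ge N^{c}$ or the shift $\xi$ is negligible after expansion, and that the Sobolev-norm dependence (including the height weight $\mathrm{ht}^k$) is polynomial in $q$ uniformly. I would first try a fibrewise Fourier expansion $f=\sum_{m}f_m$, with characters in the $\R^d/h\Z^d$ direction. Each nonzero mode is then handled by bounding the oscillatory integral in $\theta$, exploiting that $m\cdot v$ is far from integers with denominator below $N$.
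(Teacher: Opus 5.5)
The paper does not prove this statement; it quotes it verbatim from \cite[Thm.~5.2]{Kim2024}, so your opening sentence matches what the paper does. Your reconstruction, however, does not prove the theorem. The gap is in Step~3, and it is substantive rather than technical.

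First, the displacement is never harmless when $\xi\neq 0$. Since $w=\tfrac12\min_i w_i$, every expanding coordinate of $a_t$ is scaled by at least $e^{2wt}=T^2$; this is the smallest expansion rate, not the largest. Hence for most $\theta\in V$ one has $\|a_tu(\theta)g\xi\|\gg_g T^2\|\xi\|$, and regime (1) of your dichotomy covers only exactly rational $v$. Second, regime (2) misreads the minimality in $\zeta$. Minimality of $N=\zeta(v,T)$ gives $\|q'v\|_{\Z}>N'^2/T$ for all $q'\le N'<N$. It does not give that the denominator $q$ found at level $N$ is large.

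A concrete failure: take $v=(\tfrac12,\dots,\tfrac12)+\xi$ with $\|\xi\|=T^{-1/2}$. Then $\zeta(v,T)\asymp T^{1/4}$ and the denominator is $q=2$. Your Step~2(2) gives only an $O(1)$ bound, and the expanded displacement has size about $T^{3/2}$. Yet the theorem asserts decay of order $T^{-\delta_1/4}$.

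In this regime the decay must come from the perturbation itself. After expansion, the drift $\xi$ away from the level-$q$ closed orbit spreads the orbit across the torus fibres, at a rate governed by the lower bounds $\|q'v\|_{\Z}>N'^2/T$. Establishing this quantitatively is the real content of Kim's theorem (and of Str\"ombergsson's for $d=2$), and your proposal does not supply it.

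Your closing Fourier-expansion idea points in the right direction but is only a plan. The nonzero fibre modes do not descend to functions on $\X$, because $\SL_d(\Z)$ permutes the characters. Controlling them therefore needs an unfolding/counting argument, which is where the Diophantine input actually enters, and that argument is missing.
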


\subsection{Double-Equidistribution}
\begin{proposition}
    \label{prop:base step of Induction}
  Let $y= [g,gv]\tilde{\Gamma} \in \tmX$ with $v \notin \Liou(\R^d)$, and let $V$ be a compact subset of $\Mat$. Then there exists $\delta_0= \delta_0(y)>0$ (depending only on $m$ and $n$) and $k \geq 1$ such that the following holds.  

For every $f_0 \in C_c^\infty( \Mat)$ with $\supp(f_0) \subset V$, $f_1 \in C_c^{\infty}(\tilde{\mathcal{X}})$, $r,s > 0$, we have
\begin{align*}
\int_{\Mat} f_0(\theta)
 f_1(a_{r+s} u(\theta)y) &\,d\theta
=
  \int_{\Mat}f_0 \, d\theta \, \int_{\tmX} f_1 \,d\mu_{\tmX}   \\
&+
O_{y,V}\!\left( \|f_0\|_{w,s} \|f_1\|_{C^0}+ e^{-\delta_0 \min\{s,r\}}\|f_0\|_{C^0} \mathcal{S}_k(f_1)\right),
\end{align*}
 where $\|\cdot\|_{w,t}$ is defined as in \eqref{eq:def weighted norm}.
\end{proposition}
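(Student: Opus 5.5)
We follow the strategy sketched in the introduction: partition $V$ into boxes adapted to $a_s$, use $a_{r+s}u(\theta)=a_r u(\theta')a_su(\theta_j)$ on each box, replace $f_0$ by a constant on each box, and apply Theorem~\ref{thm:kim} at time $r$ with base point $a_su(\theta_j)y$. Proposition~\ref{lem:imp stop time} guarantees that for most boxes the base point lies in the generic set $\ccG$, so Kim's bound applies there.

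\textbf{Step 1 (tiling).} Cover $V\subset \mathrm{M}_{m\times n}([-L,L])$ by translates $\theta_j+b_s^{-1}Q c_s^{-1}$ of the box $Q=\mathrm{M}_{m\times n}([-1/2,1/2])$, indexed by $j\in J$. Here $|J|\asymp e^{s\sum w_i}\cdot$vol, and each box has volume $\det(b_s)^{-n}\det(c_s)^{-m}=:\lambda_s$. Boxes meeting the boundary contribute only $O(e^{-2ws}\|f_0\|_{C^0}\|f_1\|_{C^0})$, absorbed into the second error term; alternatively one can work on a slightly enlarged set, since $f_0$ vanishes outside $V$. On a box, write $\theta=\theta_j+b_s^{-1}\theta'c_s^{-1}$. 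By \eqref{eq:a1} we get $a_{r+s}u(\theta)y=a_r u(\theta')a_s u(\theta_j)y$. Moreover $|f_0(\theta)-f_0(\theta_j)|\le \|f_0\|_{w,s}$, and summing this error over the boxes gives the term $O(\|f_0\|_{w,s}\|f_1\|_{C^0})$. So it remains to compute
\[\sum_j f_0(\theta_j)\,\lambda_s\int_Q f_1(a_r u(\theta')y_j)\,d\theta',\qquad y_j:=a_su(\theta_j)y.\]

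\textbf{Step 2 (good boxes).} Set $T=e^{wr}$ and call $j$ good if $y_j\in\ccG(\kappa,T,\e)$, with $\kappa=1$ and $\e$ as in Proposition~\ref{lem:imp stop time}. For good $j$ there is a representative $y_j=[h,hb]\tilde\Gamma$ with $\|h\|\le \zeta(b,e^{wr})^{\delta_1}$ and $\zeta^{\delta_1}\ge e^{\e w r}$. Theorem~\ref{thm:kim}, applied with $V=Q$ (taking the open box, so the smooth-boundary hypothesis can be handled by a standard approximation), gives
\[\int_Q f_1(a_ru(\theta')y_j)\,d\theta'=\mu_{\tmX}(f_1)+O(\mathcal S_k(f_1)e^{-\e w r}).\]
One subtlety: the definition of $\ccG$ uses the constant $\kappa$ while Kim's hypothesis has constant $1$. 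I would set $\kappa=1$, or else check that Kim's proof tolerates a constant factor.

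\textbf{Step 3 (bad boxes).} For bad $j$, bound the integral trivially by $\|f_1\|_{C^0}\le\mathcal S_k(f_1)$. The main obstacle is controlling the total measure of the bad boxes. If $a_su(\theta_j)y\notin\ccG$, I would like to say that $a_su(\theta)y\notin\ccG'$ for all $\theta$ in the box, and then invoke Proposition~\ref{lem:imp stop time} at time $s$, which bounds the bad measure by $T^{-\delta_2}+e^{-\delta_3 s}$. But $\ccG$ is not obviously stable under the small perturbation $u(b_s^{-1}\theta'c_s^{-1})$ with $\theta'\in Q$. Since that perturbation is $a_s$-conjugate to a bounded unipotent, it changes $h$ by a factor $O(1)$ and does not change $b$, so the condition becomes $\ccG(\kappa',T,\e)$ with a modified constant $\kappa'$. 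I would therefore handle this by choosing $\theta_j$ to be any point of the box that lies in the good set whenever such a point exists; this is legitimate since Step 1 allowed an arbitrary point of each box. With that choice, a box is bad only if the whole box lies in the bad set, and its measure is then controlled directly by Proposition~\ref{lem:imp stop time} at time $s$. Summing, the bad boxes contribute $O(\|f_0\|_{C^0}\mathcal S_k(f_1)(e^{-\delta_2 wr}+e^{-\delta_3 s}))$.

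\textbf{Step 4 (reassembly).} Reassembling, the main term equals $\sum_j\lambda_s f_0(\theta_j)\mu(f_1)$. This is $\int f_0\,\mu(f_1)$ up to $O(\|f_0\|_{w,s}\|f_1\|_{C^0})$. Taking $\delta_0=\min(\e w,\delta_2w,\delta_3)$ yields the claimed estimate.
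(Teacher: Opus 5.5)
Your architecture is the paper's: tile by $a_s$-adapted boxes, freeze $f_0$, apply Theorem~\ref{thm:kim} at time $r$ on good boxes, and bound bad boxes with Proposition~\ref{lem:imp stop time} at time $s$. Step~3, however, has a genuine gap.

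The identity
$\int_{P_j} f_1(a_{r+s}u(\theta)y)\,d\theta=\lambda_s\int_Q f_1(a_ru(\theta')a_su(\theta_j)y)\,d\theta'$
for the box $P_j=\theta_j+b_s^{-1}Qc_s^{-1}$ holds only when $\theta_j$ is the center of $P_j$. The freedom to use an arbitrary point of the box exists only for freezing $f_0$. Suppose you move the base point to a good point $\theta_j^*\in P_j$. Then the $\theta'$-domain becomes $b_s(P_j-\theta_j^*)c_s=Q+c_j$, with $c_j\in Q$ depending on $j$. This is not a fixed neighbourhood of $0$ (it may have $0$ on its boundary), so Theorem~\ref{thm:kim}, whose constants depend on the domain, does not apply uniformly in $j$. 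If you translate back to $Q$ via $u(\theta'+c_j)=u(\theta')u(c_j)$, the base point becomes $u(c_j)a_su(\theta_j^*)y=a_su(\theta_j)y$, i.e.\ the center again, which is not known to be good. So the trick does not remove the constant issue; it only relocates it.

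The repair is the observation you made and then set aside. Keep $\theta_j$ the center, and call $P_j$ good iff $a_su(\theta_j)y\in\ccG(1,e^{wr},\e)$, so Kim's theorem applies with constant $1$. Fix $\kappa\in(0,1)$ with $\|u(\eta)h\|\le\kappa^{-1}\|h\|$ for all $\eta\in Q$ and $h\in\SL_d(\R)$. Suppose some $\theta\in P_j$ had $a_su(\theta)y\in\ccG(\kappa,e^{wr},\e)$ via a representative $[h,hb]\tilde\Gamma$. Then $a_su(\theta_j)y=u(\eta')a_su(\theta)y$ with $\eta'=b_s(\theta_j-\theta)c_s\in Q$. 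This point has the representative $[u(\eta')h,u(\eta')hb]\tilde\Gamma$, with the same $b$ and $\|u(\eta')h\|\le\zeta(b,e^{wr})^{\delta_1}$, so $P_j$ would be good. Hence every bad box lies entirely outside $\ccG(\kappa,e^{wr},\e)$. Proposition~\ref{lem:imp stop time} holds for any $\kappa>0$: take $\e,\delta_2,\delta_3$ from it for this $\kappa$ and use the same $\e$ in the definition of good boxes. It then bounds the total measure of bad boxes by $\ll e^{-\delta_2wr}+e^{-\delta_3s}$. This is exactly the paper's argument: constant $1$ at the center for Kim's theorem, and a smaller constant $\kappa$ on the whole box for the measure estimate. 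With this change, and with $\delta_0$ also taken at most the exponent of your boundary-box error, the rest of your proposal goes through.
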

\begin{proof}
 Fix a compact subset $V$ of $\Mat$. Without loss of generality, assume $V= \mathrm{M}_{m \times n}([-L,L])$ for some $L \geq 1$. Let $\delta_1>0$ be as in Theorem~\ref{thm:kim} for $V= \mathrm{M}_{m \times n}([-L,L])$. Also let $\e>0$ and $\delta_2, \delta_3>0$ be as in Proposition~\ref{lem:imp stop time}. Let $\kappa>0$ be a constant such that for all $g \in \SL_d(\R)$, $t>0$ and $\theta \in V$, we have \[\| u(\theta) g\| \geq \kappa \|g\|.\]
For $s>0$, define
    \[J:= \mathrm{M}_{m \times n}([-1/2,1/2]), \,\, J_s:= b_s^{-1} J c_s^{-1}.\]
    Then there exists a finite set $I_s \subset \Mat$ and a remainder set $V_s \subset V$ such that  
    \[V = V_s \sqcup \bigsqcup_{\eta \in I_s} \left(  \eta+ J_s \right),\]
where the Lebesgue measure of the remainder set $V_s$ is less than $O(e^{-4ws})$ with implied constant depends only on $V$.

\noindent Fix $f_0 \in C_c^\infty(\mathrm{M}_{m \times n}(\R))$ with $\supp(f_0) \subset V$, and $f_1 \in C_c^{\infty}(\tilde{\mathcal{X}})$. Then
\begin{align}
  &  \int_{\Mat} f_0(\theta)
 f_1(a_{r+s} u(\theta)y)\,d\theta \nonumber \\ & = \int_{V} f_0(\theta)
 f_1(a_{r+s} u(\theta)y)\,d\theta \nonumber \\
 &= \sum_{\eta \in I_s} \int_{\eta +J_s} f_0(\theta)
 f_1(a_{r+s} u( \theta)  y)\,d\theta+ O(|V_s| \|f_0\|_{C^0} \|f_1\|_{C^0} ) \nonumber\\
 &= |J_s| \sum_{\eta \in I_s} \int_{J} f_0(\eta+ b_s^{-1}\theta c_s^{-1})
 f_1(a_{r} u( \theta)a_s u(\eta)  y)\,d\theta + O(e^{-4ws}\|f_0\|_{C^0} \|f_1\|_{C^0} ), \text{ by change of variable} \nonumber\\
 &= |J_s| \sum_{\eta \in I_s} \int_{J} \left(\int_J f_0(\eta+ b_s^{-1}\omega c_s^{-1})\, d\omega +  O(\|f_0\|_{w,s}) \right)  
 f_1(a_{r} u( \theta)a_s u(\eta)  y)\,d\theta + O(e^{-4ws}\|f_0\|_{C^0} \|f_1\|_{C^0} )  \nonumber \\
 &= |J_s| \sum_{\eta \in I_s} \left(\int_J f_0(\eta+ b_s^{-1}\omega c_s^{-1})\, d\omega \right) \left( \int_J  f_1(a_{r} u( \theta)a_s u(\eta)  y)\,d\theta \right) + O( \|f_0\|_{w,s}\|f_1\|_{C^0}+ e^{-4ws} \|f_0\|_{C^0} \|f_1\|_{C^0} ). \label{eq:temp:4.-1}
\end{align}

  We now partition the index set $I_s$ into two disjoint subsets:
\[I_s :=I_s' \sqcup I_s'',\]
where \[I_s'=\{\eta \in I_s: a_s u(\eta)y \in \ccG(1,e^{wr}, \e)\} \quad \text{and } I_s''=I_s \setminus I_s'.\]
Recall that $ \ccG( \cdot)$ is defined in \S \ref{subsec:gen}.

  For $\eta \in I_s'$, Theorem~\ref{thm:kim} gives 
\begin{align}
    \label{eq:temp:4.0}
     \int_J  f_1(a_{r} u( \theta)a_s u(\eta)  y)\,d\theta = \int_{\tmX} f_1 \, d\mu_{\tmX} + O(\mathcal{S}_k(f_1)e^{-\e w r}).
\end{align}
We now estimate the contribution from the complement $I_s''$. For $\eta \in I_s''$, we have $ a_su(\eta)y \notin \ccG(1,e^{wr},\e)$, which implies that
\[a_su(\theta)y \notin \ccG(\kappa, e^{wr},\e) \,\,\text{  for all } \theta \in \eta+ J_s.\]
Hence, applying Proposition~\ref{lem:imp stop time}, we obtain 
\begin{eqnarray*}
    |J_s| \cdot  \#I_s'' &\leq &   \left| \{ \eta \in V:  a_su(\eta)y \notin \ccG(\kappa,e^{wr},\e)\} \right| +|V_s|\\ &\ll & e^{-\delta_2wr} + e^{-\delta_3s} + e^{-ws}.
\end{eqnarray*}
 
\noindent Therefore, we get that
\begin{align}
     &|J_s| \sum_{\eta \in I_s''} \left(\int_J f_0(\eta+ b_s^{-1}\omega c_s^{-1})\, d\omega \right) \left( \int_J  f_1(a_{r} u( \theta)a_s u(\eta)  y)\,d\theta \right)  \nonumber\\
     &\ll\|f_0\|_{C^0}\|f_1\|_{C^0} (e^{-\delta_2wr} + e^{-\delta_3s} + e^{-ws}). %\vol(J_s) \sum_{\eta \in I_s''} \left(\int_J f_0(\eta+ b_s^{-1}\omega c_s^{-1})\, d\omega \right) \left( \int_\tmX  f_1\,d\mu_{\tmX} \right) 
     \label{eq:temp:4.1}
\end{align}

Take $\delta_0= \min\{\e w,  \delta_2 w, \delta_3, w\}$. Then combining~\eqref{eq:temp:4.-1},~\eqref{eq:temp:4.0}, and~\eqref{eq:temp:4.1}, we get
\begin{align}
  &  \int_{\Mat} f_0(\theta)
 f_1(a_t u(\theta)y)\,d\theta \nonumber\\
 &=  |J_s| \sum_{\eta \in I_s} \left(\int_{J} f_0(\eta+ b_s^{-1}\omega c_s^{-1})\, d\omega \right) \left( \int_\tmX  f_1\,d\mu_{\tmX} \right) \nonumber \\ &+ O \left(  \|f_0\|_{w,s}\|f_1\|_{C^0}+ e^{-\delta_0 \min\{s,r\}} \|f_0\|_{C^0} \mathcal{S}_k(f_1)+ e^{-\delta_0 \min\{s,r\}} \|f_0\|_{C^0} \|f_1\|_{C^0}\right) \nonumber\\
 &=\sum_{\eta \in I_s} \left(\int_{\eta+J_s} f_0( \omega)\, d\omega \right) \left( \int_\tmX  f_1\,d\mu_{\tmX} \right)+O\left(\|f_0\|_{w,s}\|f_1\|_{C^0}+ e^{-\delta_0 \min\{s,r\}} \|f_0\|_{C^0} \mathcal{S}_k(f_1)\right) \nonumber \\&=  \left(\int_{\Mat} f_0(\omega)\, d\omega \right) \left( \int_\tmX  f_1\,d\mu_{\tmX} \right) + O\left( \|f_0\|_{w,s}\|f_1\|_{C^0}+ e^{-\delta_0 \min\{s,r\}} \|f_0\|_{C^0} \mathcal{S}_k(f_1) \right), \nonumber
\end{align}
  the second last equality follows using the estimate $|V_s| \ll e^{-\delta_0 s}$. Hence, the proposition follows.  
\end{proof}

\subsection{Proof of Theorem~\ref{thm: main thm equidistribution}}

\begin{proof}[Proof of Theorem~\ref{thm: main thm equidistribution}]
    Fix $y= [g,gv]\tilde{\Gamma} \in \tmX$ with $v \notin \Liou(\R^d)$, and let $V$ be a neighbourhood of zero in $\Mat$. Choose $L>1$  such that $V \subset \mathrm{M}_{m \times n}([-L,L])$. Let $\delta_0$ and $k$ be as in Prop.~\ref{prop:base step of Induction}. Without loss of generality, we may assume that $\delta_0 < 4w$.

  Fix $F_0 \in C_c^\infty(\Mat)$ and $F_1, \ldots, F_\ell \in C_c^\infty(\tmX)$ with $\supp(F_0) \subset V$. Fix $t_1< t_2 < \ldots < t_\ell$ and let $t= D(t_1, \ldots, t_\ell)$ be as in Theorem \ref{thm: main thm equidistribution}.

\noindent  Apply Proposition~\ref{prop:base step of Induction} with 
    $$
    f_0(\theta)= F_0(\theta) F_1(a_{t_1}u(\theta) y) \ldots F_{\ell-1}(a_{t_{\ell-1}}u(\theta)y),\,\, f_1= F_\ell, \,\,s= \frac{t_\ell+t_{\ell-1}}{2} , \,\,r=\frac{t_\ell-t_{\ell-1}}{2},
    $$
gives  
    \begin{align*}
         &\int_{\Mat}  F_0(\theta) F_1(a_{t_1} u(\theta) y) \cdots F_l(a_{t_\ell}u(\theta)y) \, d\theta \\
         &= \left(\int_{\Mat}  F_0(\theta) F_1 (a_{t_1} u(\theta) y) \cdots F_{\ell-1}( a_{t_{\ell-1} }  u(\theta)y) \, d\theta \right) \int_{\tmX} F_\ell \,d\mu_{\tmX} \\
         &+ O\!\left(\|F_0\|_{C^0}\cdots \|F_{\ell-1}\|_{C^0}  \mathcal{S}_k(F_{\ell-1})\,e^{-\delta_0 t}+\| f_0\|_{w,s}\|F_l\|_{C^0}  \right)
         \end{align*}
Now using Lemma~\ref{lem: C^k norm of restricted function}, we get
$$
\| f_0\|_{w,s} \ll e^{-4w (s-t_{l-1})} \|F_0\|_{C^1}\dots\|F_{\ell-1}\|_{C^1}\leq e^{-\delta_0 t} \|F_0\|_{C^1}\dots\|F_{\ell-1}\|_{C^1}.
$$
Thus, we get that 

         \begin{align*}
            &\int_{\Mat}  F_0(\theta) F_1(a_{t_1} u(\theta) y) \cdots F_\ell(a_{t_\ell}u(\theta)y) \, d\theta \\
            &=  \left(\int_{\Mat}  F_0(\theta) F_1 (a_{t_1} u(\theta) y) \cdots F_{\ell-1}( a_{t_{\ell-1} }  u(\theta)y) \, d\theta \right) \int_{\tmX} F_\ell \,d\mu_{\tmX} + O\!\left(\|F_0\|_{C^1} \mathcal{S}_k(F_1)\cdots  \mathcal{S}_k(F_\ell)\,e^{-\delta_0 t} \right).
         \end{align*}
         The theorem now follows by induction.  
\end{proof}

\section{Notation II} \label{sec:notation 2}
\subsection{Siegel Transform}
 Fix $d:=m+n$ as earlier. For the rest of the paper, we now identify $\tmX$ with the space of all affine unimodular lattices in $\R^d$, via the map
 $$
 [A,b]\tilde{\Gamma} \mapsto A\Z^d +b.
 $$
 
 For a bounded measurable function with compact support $f: \mathbb R^{d} \rightarrow \mathbb R$, the Siegel transform $\hat{f}$ of the function $f$ is a function from $\tmX $ to $ \mathbb R$ defined as
\[
\hat{f}(\Lambda)=\sum_{x \in \Lambda } f(x) \quad \text{for}\,\, \Lambda \in \tmX.
\]
We need the following Siegel Mean Value Theorem and Rogers formula for affine lattices (see \cite[Lemma 4]{JA} and \cite[Formula 3.7]{BMV}).
\begin{proposition}\label{prop:roger}
    For $f \in L^1(\mathbb R^d) \cap L^2(\mathbb R^d),$ we have the following:
    \[\int_{\tmX}\hat{f}(\Lambda) \, d\mu_{\tmX}(\Lambda)=\int_{\mathbb R^d}f(\theta) \, d\theta,\]
    \[\int_{\tmX}\hat{f}(\Lambda)^2 \, d\mu_{\tmX}(\Lambda)=\left(\int_{\mathbb R^d}f(\theta) \, d\theta \right)^2+ \int_{\mathbb R^d}f(\theta)^2 \, d\theta.\]
\end{proposition}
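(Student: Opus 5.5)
This is the Siegel mean value theorem together with the second-moment (Rogers-type) formula for affine lattices. I would prove it by unfolding the integral over $\tmX$ along the fibration $\tmX\to\X$, $[A,b]\tilde\Gamma\mapsto A\Gamma$. Each fiber is the torus $\R^d/A\Z^d$ of covolume $1$, and $\mu_{\tmX}$ disintegrates as $\mu_\X$ on the base times normalized Haar measure on the fiber. This disintegration holds because $\tilde G=\SL_d(\R)\ltimes\R^d$ and the translation part of $\tilde\Gamma$ is $\Z^d$.

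\textbf{First moment.} Fix the base point $A\Gamma$ and integrate over the translation $b$ in a fundamental domain $D$ of $A\Z^d$, which has volume $1$. Then
\[\int_D\sum_{x\in A\Z^d}f(x+b)\,db=\int_{\R^d}f,\]
since the translates $D+x$, $x\in A\Z^d$, tile $\R^d$. This value does not depend on $A$, so integrating over $\X$ gives the first identity. To justify the interchange of sum and integral, I first treat $f\ge 0$ by monotone convergence (Tonelli), and then pass to $f\in L^1$ by splitting $f=f^+-f^-$.

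\textbf{Second moment.} Expand
\[\hat f(\Lambda)^2=\sum_{x,y\in A\Z^d}f(x+b)f(y+b).\]
Write $y=x+z$ with $z\in A\Z^d$. For fixed $z$, summing over $x$ and integrating over $b\in D$ unfolds in the same way as before, giving $\int_{\R^d}f(u)f(u+z)\,du$. Hence, for each fixed $A$, the fiber integral equals
\[\sum_{z\in A\Z^d}\int_{\R^d}f(u)f(u+z)\,du=\hat g(A\Z^d),\qquad g(z):=\int_{\R^d} f(u)f(u+z)\,du.\]
Here $\hat g$ is the linear Siegel transform, now summed over all of $A\Z^d$ including $0$, and $g\in L^1$ by Young's inequality.

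Next I split off the $z=0$ term, which equals $g(0)=\int f^2$. For the remaining terms I apply the classical Siegel mean value theorem on $\X$, which states that $\int_\X\sum_{z\in A\Z^d\setminus 0}g(z)\,d\mu_\X=\int_{\R^d} g$. Since
\[\int_{\R^d} g=\Bigl(\int_{\R^d} f\Bigr)^2\]
by Fubini, this yields exactly the second formula.

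\textbf{Main obstacle.} The delicate step is justifying absolute convergence for a general $f\in L^1\cap L^2$ that is not assumed nonnegative. To handle it, I would first prove both identities for $|f|$, where Tonelli applies freely to the nonnegative sums and the resulting finite value gives integrability. The identities for general $f$ then follow by dominated convergence, or by bilinearity applied to $f^\pm$ in the second moment. Finally, Siegel's classical theorem is valid for $L^1$ functions, so it can be applied to $g$ without further work.
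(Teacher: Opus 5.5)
Your proof is correct. The paper gives no argument of its own for this proposition; it simply imports both identities from \cite[Lemma~4]{JA} and \cite[Formula~3.7]{BMV}. So your proposal is a self-contained derivation rather than a reconstruction of the paper's proof.

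Your route is to disintegrate $\mu_{\tmX}$ over $\X$, with Lebesgue measure on the covolume-one fibre $\R^d/A\Z^d$. Unfolding the fibre integral turns $\int_{\tmX}\hat f^2\,d\mu_{\tmX}$ into the integral over $\X$ of the full lattice sum of the autocorrelation $g(z)=\int_{\R^d} f(u)f(u+z)\,du$. You then split off the $z=0$ term and apply the classical Siegel mean value theorem to the rest. This is the standard mechanism behind such formulas, and it makes the hypotheses transparent:
\begin{itemize}
\item $L^1$ alone yields the first moment.
\item $L^2$ enters only through the diagonal term $g(0)=\int f^2$.
\item Running the computation for $|f|$ first shows $\hat f\in L^2(\tmX)$, which justifies every interchange of sums and integrals for signed $f$.
\end{itemize}

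One point is worth stating explicitly: the Siegel step is exactly where $d\ge 2$ is used. For $d=1$ the space $\tmX$ is just $\R/\Z$, there is no Siegel theorem, and the second identity fails. For example, if $f$ is the indicator of $[0,1/2]$, the left side is $1/2$ while the right side is $3/4$. This is harmless in the paper, where $d=m+n\ge 2$.
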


\subsection{Height Function}
We also recall the definition of the height function $\alpha_0$ on $\X:= \SL_{d}(\R)/\SL_d(\Z)$, which is identified as the space of all unimodular lattices in $\R^d$ via the map
$$
A \cdot \SL_d(\Z) \mapsto A\Z^d.
$$
Recall that given a unimodular lattice $\Lambda$ of $\mathbb R^d,$ we say that a subspace $V$ of $\mathbb R^d$ is $\Lambda$-rational if $V \cap \Lambda$ is a lattice in $V.$ Let $d_{\Lambda}(V)$ denotes the volume of $V/(V \cap \Lambda)$ when $V$ is $\Lambda$-rational. We define $\alpha_0 : \X \rightarrow \mathbb R$ by
\begin{equation}\label{equ:alpha_d}
    \alpha_0(\Lambda):=\sup \{d_{\Lambda}(V)^{-1}:\, V \text{ is }\Lambda\text{-rational subspace of }\mathbb R^d \}.
\end{equation}
Further we define ${\alpha}$ on $\tmX$ as $${\alpha}([A,b]\tilde{\Gamma}):= \alpha_0(A\cdot \SL_d(\Z)).$$

The following proposition describes how the Seigel transform grows with respect to the height function.
\begin{proposition}[{\cite[Prop.~3.14]{AG23}}]\label{prop:hat_f_bound_alpha}
    Suppose that $f:\mathbb R^{d} \rightarrow \mathbb R$ be a bounded measurable function with compact support. Then
    \[|\hat{f}(\Lambda)| \ll_{\supp(f)} \|f\|_{C^0} \alpha(\Lambda) \quad \text{for all} \,\, \Lambda \in \tmX.\]
\end{proposition}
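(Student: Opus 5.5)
The statement to prove is Proposition~\ref{prop:hat_f_bound_alpha}: $|\hat f(\Lambda)| \ll \|f\|_{C^0}\,\alpha(\Lambda)$, uniformly over affine lattices, with the constant depending only on $\supp(f)$.

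First reduce to indicators. Choose $R>0$ with $\supp(f)\subset B_R:=\{x\in\R^d:\|x\|\le R\}$. Then
\[
|\hat f(\Lambda)|\le \|f\|_{C^0}\,\#(\Lambda\cap B_R).
\]
So it suffices to show $\#(\Lambda\cap B_R)\ll_R \alpha(\Lambda)$ for every affine unimodular lattice $\Lambda=A\Z^d+b$.

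Next, pass from the affine lattice to its linear part. If $\Lambda\cap B_R$ is empty there is nothing to prove. Otherwise fix a point $x_0\in\Lambda\cap B_R$, so that $\Lambda=x_0+A\Z^d$. Then
\[
\Lambda\cap B_R\subset x_0+\bigl(A\Z^d\cap B_{2R}\bigr),
\]
and hence
\[
\#(\Lambda\cap B_R)\le \#\bigl(A\Z^d\cap B_{2R}\bigr).
\]
This removes the translation entirely. Since $\alpha(\Lambda)=\alpha_0(A\Z^d)$ by definition, we are reduced to the classical estimate
\[
\#(\Delta\cap B_{2R})\ll_R \alpha_0(\Delta)
\]
for unimodular lattices $\Delta$ in $\R^d$.

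For the classical estimate, use Minkowski successive minima $\lambda_1\le\dots\le\lambda_d$ of $\Delta$ together with a reduced basis $v_1,\dots,v_d$ with $\|v_i\|\asymp\lambda_i$. Let $k$ be the largest index with $\lambda_k\le 2R$; if no such index exists, the count is $1$. The points of $\Delta\cap B_{2R}$ lie in the span $V_k$ of $v_1,\dots,v_k$, because for a reduced basis the coefficients of a lattice vector of norm $\le 2R$ are controlled. The standard packing argument then gives
\[
\#\bigl(\Delta\cap B_{2R}\bigr)\ll_R \prod_{i\le k}\max(1,\lambda_i^{-1})\ll \prod_{i\le k}\lambda_i^{-1}.
\]
The last step uses $\lambda_i\le 2R$, at the cost of an $R$-dependent constant. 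Since $d_\Delta(V_k)\asymp\lambda_1\cdots\lambda_k$, this bound is $\ll d_\Delta(V_k)^{-1}\le\alpha_0(\Delta)$. Alternatively, the known bound $\sum_{x\in\Delta}\chi_{B}(x)\ll\alpha_0(\Delta)$ from Schmidt's lemma, as used in Kleinbock--Margulis and Eskin--Margulis--Mozes, can be quoted directly.

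The one step needing care is the translation reduction. Centering at a lattice point inside the ball ensures the bound does not depend on $b$, and nothing else about the affine structure is needed.
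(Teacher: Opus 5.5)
Your argument is correct. The paper gives no proof of its own here; it simply imports the bound from \cite[Prop.~3.14]{AG23}. Your write-up is therefore a self-contained substitute. Re-centering at a point $x_0\in\Lambda\cap B_R$ is exactly what makes the constant independent of the translation part $b$. After that, only the classical linear estimate $\#(\Delta\cap B_{2R})\ll_R\alpha_0(\Delta)$ is needed, which you can either quote (Schmidt's lemma, as in Eskin--Margulis--Mozes) or prove via successive minima.

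If you prove it yourself, tighten one step. Quasi-orthogonality of a reduced basis only gives $|c_i|\ll R/\lambda_i$ for the coefficients with $i>k$. That makes them bounded, not zero, so it does not by itself place $\Delta\cap B_{2R}$ inside $\Span(v_1,\dots,v_k)$.

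The correct reason is the definition of $\lambda_{k+1}>2R$:
\begin{itemize}
\item The space $W$ spanned by $\Delta\cap B_{2R}$ has dimension at most $k$.
\item $W$ contains vectors realizing $\lambda_1,\dots,\lambda_k$.
\item Hence $W$ is a $k$-dimensional $\Delta$-rational subspace, and $\Delta\cap W$ has successive minima exactly $\lambda_1,\dots,\lambda_k$.
\item Minkowski's second theorem in $W$ then gives $d_\Delta(W)\asymp\lambda_1\cdots\lambda_k$.
\end{itemize}
Your packing count inside $W$ then finishes the argument.

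Finally, the case $\lambda_1>2R$ is covered because $\alpha_0\ge 1$ (take $V=\R^d$ in the definition).
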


\begin{proposition}[{\cite[Prop.~3.17]{AG23}}]\label{prop:alpha_in_L_p}
    The function $\alpha $ is in $L^p(\tmX)$ for $1\leq p < d.$ In particular,
    \[\mu_{\tmX}(\{\alpha \geq L\}) \ll_p L^{-p} \quad \text{for all} \,\,1 \leq p<d.\]
\end{proposition}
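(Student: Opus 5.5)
The plan is to reduce the statement to a tail estimate for $\alpha_0$ on $\X$, and then deduce both the $L^p$ bound and the tail bound from that estimate using the layer-cake formula and Chebyshev's inequality.

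\textbf{Step 1: reduction to $\X$.} The projection $\pi:\tmX\to\X$, $[A,b]\tilde\Gamma\mapsto A\,\SL_d(\Z)$, is a torus bundle. Its fibres are the translates $A(\R^d/\Z^d)$, and $\pi_*\mu_{\tmX}=\mu_{\X}$. Since $\alpha=\alpha_0\circ\pi$ by definition, we get $\int_{\tmX}\alpha^p\,d\mu_{\tmX}=\int_{\X}\alpha_0^p\,d\mu_{\X}$ and $\mu_{\tmX}(\{\alpha\ge L\})=\mu_{\X}(\{\alpha_0\ge L\})$. So it suffices to work on $\X$.

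\textbf{Step 2: the key tail estimate.} I will prove
\[
\mu_{\X}(\{\alpha_0\ge L\})\ll L^{-d}\quad\text{for all } L\ge 1.
\]
Write $\alpha_0=\max_{1\le i\le d-1}\alpha_0^{(i)}$, where $\alpha_0^{(i)}(\Lambda)=\sup\{d_\Lambda(V)^{-1}:\dim V=i\}$; the case $i=d$ contributes $1$. For fixed $i$, let $N_i(\Lambda,\epsilon)$ be the number of primitive $i$-dimensional sublattices of $\Lambda$ with covolume at most $\epsilon$. If $\alpha_0^{(i)}(\Lambda)\ge L$, then $N_i(\Lambda,L^{-1})\ge 1$. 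Markov's inequality therefore gives
\[
\mu_{\X}(\{\alpha_0^{(i)}\ge L\})\le \int_{\X} N_i(\Lambda,L^{-1})\,d\mu_{\X}(\Lambda).
\]
To evaluate the right-hand side, I use the Siegel mean value theorem for primitive $i$-tuples, equivalently the $\SL_d(\R)$-equivariant version on decomposable vectors of $\bigwedge^i\R^d$. It identifies the integral with the measure of $\{$decomposable $\omega\in\bigwedge^i\R^d:\|\omega\|\le\epsilon\}$ with respect to the invariant measure on the cone of decomposable vectors. That measure is homogeneous of degree $d$: this matches the count of primitive $i$-sublattices of $\Z^d$ of covolume $\le T$, which grows like $c_{i,d}T^d$ (Schmidt). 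Hence the integral is $\ll\epsilon^d=L^{-d}$. Summing over $i=1,\dots,d-1$ gives the key tail estimate.

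An alternative route to the same estimate goes through reduction theory. On a Siegel set, with $\Lambda=ka n\Z^d$ and $a=\diag(a_1,\dots,a_d)$, Minkowski's theorem gives $\alpha_0\asymp\max_i (a_1\cdots a_i)^{-1}$. Integrating $\mathbf 1_{\{\alpha_0\ge L\}}$ against the density $\prod_{i<j}a_j/a_i$ produces the same exponent $d$, already visible in the case $i=1$, where $\mu(\lambda_1(\Lambda)<\epsilon)\asymp\epsilon^d$.

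\textbf{Step 3: conclusion.} For $1\le p<d$, the layer-cake formula gives
\[
\int\alpha_0^p\,d\mu_{\X} = p\int_0^\infty L^{p-1}\mu(\{\alpha_0\ge L\})\,dL \ll 1+\int_1^\infty L^{p-1-d}\,dL<\infty.
\]
Chebyshev's inequality then gives $\mu(\{\alpha\ge L\})\le L^{-p}\|\alpha\|_p^p\ll_p L^{-p}$. This also follows directly from Step 2 for $L\ge1$, and is trivial for $L<1$.

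\textbf{Main obstacle.} The main difficulty is Step 2 for intermediate dimensions $1<i<d-1$. There one has to justify the mean value formula for primitive sublattices, namely the measure computation on the cone of decomposable $i$-vectors. The cleanest route is to average over $i$-tuples of primitive vectors and bound by a product of Siegel integrals over a fundamental domain for the $\SL_i(\Z)$-action on bases. Failing that, I would fall back on the Siegel-set computation in the alternative route of Step 2.
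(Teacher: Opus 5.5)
Your proof is correct. The paper gives no argument for this statement; it imports it from \cite[Prop.~3.17]{AG23}. That proof consists of your Step~1 ($\pi_*\mu_{\tmX}=\mu_{\X}$ and $\alpha=\alpha_0\circ\pi$), the classical Eskin--Margulis--Mozes fact that $\alpha_0\in L^p(\X)$ for $1\le p<d$, and Chebyshev as in your Step~3. The Eskin--Margulis--Mozes fact is proved by integrating over a Siegel set, which is your \emph{alternative route}. So the skeleton is the same, and what you add is a proof of the $\X$-statement. Your main route, unfolding over primitive decomposable $i$-vectors plus homogeneity of the invariant measure on the Pl\"ucker cone, avoids reduction theory and gives the sharp tail $\mu_{\X}(\alpha_0\ge L)\ll L^{-d}$ directly.

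Two points need tightening.

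First, derive the degree-$d$ homogeneity directly rather than infer it from Schmidt's counting asymptotics; that inference would itself need a counting-to-volume theorem. Let $H$ be the stabiliser of $e_1\wedge\cdots\wedge e_i$, i.e.\ block upper-triangular matrices whose diagonal blocks have determinant one.
\begin{itemize}
\item $H$ is unimodular and $H\cap\SL_d(\Z)$ is a lattice in it. Since $\SL_d(\Z)$ is transitive on primitive integral decomposable $i$-vectors, the unfolding is legitimate.
\item The scaling $\omega\mapsto t\omega$ is right multiplication by $g_t=\diag(t^{1/i}I_i,\,t^{-1/(d-i)}I_{d-i})$, which normalises $H$.
\item $g_t$ acts on $\mathfrak g/\mathfrak h$ (the lower-left $(d-i)\times i$ block plus a trivial line) with determinant $t^{-(1/i+1/(d-i))\,i(d-i)}=t^{-d}$, the same for every $i$.
\item Calibrating against $i=1$, where $\nu$ is Lebesgue measure on $\R^d\setminus\{0\}$, gives $\nu(\{\|\omega\|\le\e\})=\e^{d}\,\nu(\{\|\omega\|\le1\})$. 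The right-hand side is finite by summing over dyadic shells, each of which is compact in the cone.
\end{itemize}

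Second, in the Siegel-set route your normalisation puts the cusp where $a_1\cdots a_i\to0$. The Haar density is then $\prod_{j<k}a_j/a_k$, not $\prod_{j<k}a_k/a_j$; the latter would give the cusp infinite mass. With the correct density, the tail of each $(a_1\cdots a_i)^{-1}$ is indeed $\asymp L^{-d}$.
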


\begin{lemma}\label{lem:Y_alpha_bound}
    There exists $\kappa>0$ such that for all $L \geq 1$ and $s \geq \kappa \log L$,
    \[\mu_{\Y_{\xi}}(\{y \in \Y_{\xi}: \alpha(b^s y) \geq L\}) \ll_p L^{-p}\,\, \text{ for all}\,\, 1 \leq p<d.\]
\end{lemma}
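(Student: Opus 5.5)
Here I read $\Y_\xi$ as the family of affine lattices $\{u(\theta)[I_d,\xi']\tilde{\Gamma}:\theta\in\Matt\}$ attached to the fixed shift, with $\xi'=(\xi,0)$ or the analogous embedding, and $\mu_{\Y_\xi}$ as the pushforward of Lebesgue measure in $\theta$. I read $b^s$ as the diagonal flow $a_s$ from \eqref{equ:b_t}. The plan is to discard the affine part entirely. By definition $\alpha([A,b]\tilde\Gamma)=\alpha_0(A\,\SL_d(\Z))$, so with $x_0:=\Z^d\in\X$,
\[
\mu_{\Y_\xi}(\{y:\alpha(a_s y)\ge L\})=\bigl|\{\theta\in\Matt:\ \alpha_0(a_su(\theta)x_0)\ge L\}\bigr|.
\]
This is a statement about the space of lattices $\X$ only. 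For it we may use the effective equidistribution of Kleinbock--Margulis (Theorem~\ref{thm:KM}) together with the tail bound of Proposition~\ref{prop:alpha_in_L_p}. The Liouville condition on $\xi$ plays no role here.

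The first step mimics the proof of Lemma~\ref{appendix a: main lemma}, with $\mathcal{U}_M$ replaced by the superlevel set $W_L:=\{\alpha_0\ge L\}\subset\X$. I fix a bump function $\varphi\in C_c^\infty(\SL_d(\R))$ of integral one, supported near the identity. I then choose $\beta>1$ such that $\beta^{-1}\alpha_0(x)\le\alpha_0(gx)\le\beta\alpha_0(x)$ for all $g\in\supp\varphi$. This is possible because $\alpha_0$ is defined through covolumes of rational subspaces, and these change by at most a bounded factor under a fixed compact set of $g$. It gives the inclusions $W_{\beta L}\subseteq g^{-1}W_L\subseteq W_{\beta^{-1}L}$. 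Next I set
\[
f_L:=\varphi*\ind_{W_{\beta^{-1}L}}.
\]
This is smooth, satisfies $f_L\ge \ind_{W_L}$, and has $\|f_L\|_{C^k}\le\|\varphi\|_{C^k}$, a bound independent of $L$. Since $f_L$ may fail to have compact support, I will instead apply Theorem~\ref{thm:KM} to $1-f_L$, exactly as in the earlier argument.

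The second step is the estimate itself. Applying Theorem~\ref{thm:KM} with $V=\Matt$ gives
\[
\bigl|\{\theta: a_su(\theta)x_0\in W_L\}\bigr|\le\int f_L(a_su(\theta)x_0)\,d\theta=\mu_\X(f_L)+O(e^{-\delta s}).
\]
By Fubini and invariance, $\mu_\X(f_L)=\mu_\X(W_{\beta^{-1}L})$. By Proposition~\ref{prop:alpha_in_L_p}, this is $\ll_p(\beta^{-1}L)^{-p}\ll_p L^{-p}$ for each $1\le p<d$. Finally I take $\kappa:=d/\delta$. Then $s\ge\kappa\log L$ gives $e^{-\delta s}\le L^{-d}\le L^{-p}$ for every $p<d$ and $L\ge1$, which yields the lemma.

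The main obstacle is the mollification step. One must check that $\alpha_0$ is genuinely uniformly log-Lipschitz under a neighbourhood of the identity, so that $\beta$ exists independently of $L$. One must also check that the $C^k$ norm of $f_L$ (or of $1-f_L$) is uniform in $L$, so that the error term in Theorem~\ref{thm:KM} does not carry a hidden power of $L$. If the base point $x_0$ enters the implied constant in Theorem~\ref{thm:KM}, this is harmless, since $x_0$ is fixed.
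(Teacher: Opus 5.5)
Your proof is correct and takes the same route as the paper. After discarding the affine part, the paper simply invokes \cite[Prop.~4.5]{BG} for the bound $|\{\theta\in\Matt:\alpha_0(a_su(\theta)\Z^d)\ge L\}|\ll_p L^{-p}$. Your mollification argument with Theorem~\ref{thm:KM} and Proposition~\ref{prop:alpha_in_L_p} is in substance the proof of that cited proposition. One cosmetic adjustment: Theorem~\ref{thm:KM} is stated only for boxes $\mathrm{M}_{m\times n}([-L',L'])$ with $L'>1$, so bound the $[0,1]$-box measure by the integral over $[-2,2]^{mn}$; since you only need an upper bound, this costs just a harmless factor $4^{mn}$.
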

\begin{proof}
By definition 
\begin{align*}
    \mu_{\Y_{\xi}}(\{y \in \Y_{\xi}: \alpha(b^s y) \geq L\}) =  \left|\{\theta \in \mathrm{M}_{m \times n}([0,1]): {\alpha}(a_s u(\theta){\Gamma} ) \geq L\} \right| \nonumber.
    \end{align*}
The lemma therefore follows directly from \cite[Prop.~4.5]{BG}.
\end{proof} 
\subsection{Truncated Siegel Transforms}
The Siegel transform of a compactly supported function is, in general, unbounded on $\tmX$. To handle this difficulty, one typically approximates $\hat{f}$ by compactly supported functions on $\tmX$, called \emph{truncated Siegel transforms}, denoted by $\hat{f}^{(L)}$. These are defined with the aid of a smooth cut-off function $\eta_L$, introduced in the following lemma.
\begin{lemma}
   For every $c>1,$ there exists a family $(\eta_L)$ of $C_c^{\infty}(\tmX)$ functions satisfying:
   \[0\leq \eta_L \leq 1,\,\,\,\, \eta_{L}=1\,\,\text{on}\,\, \{\alpha \leq c^{-1}L\},\,\,\,\,\eta_{L}=0\,\,\text{on}\,\, \{\alpha \geq cL\},\,\,\,\, \|\eta_L\|_{C^k} \ll 1 \]
\end{lemma}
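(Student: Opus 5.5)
We build $\eta_L$ as a mollification of the indicator of a sublevel set of $\alpha$, in the same way $f_M$ was built from $\ind_{\mathcal{U}_{\beta^{-1}M}}$ in the proof of Lemma~\ref{appendix a: main lemma}. The input is that $\alpha$ is uniformly ``log-Lipschitz'' under small group elements: there is a neighbourhood $\mathcal{O}$ of the identity in $\tilde G$ and a constant $c_0>1$ with
\[
c_0^{-1}\alpha(z)\le \alpha(hz)\le c_0\,\alpha(z)\qquad\text{for all } h\in\mathcal{O},\ z\in\tmX.
\]
To see this, write $h=[h_0,h_1]$. Then $\alpha(hz)=\alpha_0(h_0\pi(z))$, since $\alpha$ only depends on the linear part. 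Next, $d_{h_0\Lambda}(h_0V)$ and $d_\Lambda(V)$ differ by a factor bounded in terms of $\|h_0\|$ and $\|h_0^{-1}\|$, because $h_0$ distorts $k$-dimensional volumes by at most $\|h_0\|^k$. Taking the supremum over $\Lambda$-rational $V$ gives the claim.

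Now fix $c>1$ and choose $\mathcal{O}$ small enough that $c_0\le c^{1/2}$. Fix $\varphi\in C_c^\infty(\tilde G)$ with $\varphi\ge0$, $\supp\varphi\subset\mathcal{O}$ (with $\mathcal{O}$ symmetric) and $\int\varphi\,dm_{\tilde G}=1$. Define
\[
\eta_L(z):=\int_{\tilde G}\varphi(h)\,\ind_{\{\alpha\le c^{1/2}L\}}(h^{-1}z)\,dm_{\tilde G}(h).
\]
Since $\varphi\ge0$ with total mass $1$, we get $0\le\eta_L\le1$ immediately.

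The two level-set conditions follow from the log-Lipschitz property.
\begin{itemize}
\item If $\alpha(z)\le c^{-1}L$, then for every $h\in\supp\varphi$ we have $\alpha(h^{-1}z)\le c^{1/2}c^{-1}L\le c^{1/2}L$. Hence the integrand is $\varphi(h)$ throughout and $\eta_L(z)=1$.
\item If $\alpha(z)\ge cL$, then $\alpha(h^{-1}z)\ge c^{-1/2}cL=c^{1/2}L$. One should use the strict version of the inequality, or shrink $c_0$ slightly below $c^{1/2}$, so that the indicator vanishes. Then $\eta_L(z)=0$.
\end{itemize}
Compact support follows from the second point: $\eta_L$ is supported in $\{\alpha\le cL\}$. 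This set is compact in $\tmX$, because bounded $\alpha$ means bounded $\alpha_0$, which by Mahler's criterion confines $\pi(z)$ to a compact subset of $\X$, and the fibres of $\pi:\tmX\to\X$ are compact tori.

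For smoothness and the bound $\|\eta_L\|_{C^k}\ll1$, note that $\eta_L=\varphi*\ind$ is a convolution on the left. For $Y\in\Lie(\tilde G)$, the derivative $D_Y\eta_L$ is obtained by differentiating $\varphi$ under the integral: substituting $h\mapsto \exp(tY)h$ and using unimodularity of $\tilde G$ gives
\[
D_Z\eta_L(z)=\int (\tilde D_Z\varphi)(h)\,\ind(h^{-1}z)\,dm_{\tilde G}(h),
\]
where $\tilde D_Z$ denotes the corresponding right-invariant derivative. Therefore
\[
|D_Z\eta_L|\le\|\tilde D_Z\varphi\|_{L^1},
\]
which is independent of $L$. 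This gives $\|\eta_L\|_{C^k}\ll_k 1$.

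The only point requiring care is the log-Lipschitz property of $\alpha$ with constants uniform over $\tmX$. This is a standard fact about $\alpha_0$ (see \cite{BG}, or the argument in \cite[Lem.~4.1]{Shi20}), and it transfers directly because $\alpha$ depends only on the linear part.
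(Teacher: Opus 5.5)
The paper does not prove this lemma; it states it without argument and borrows the standard cut-off from \cite{BG}. Your construction is exactly that standard argument and is correct: you convolve the indicator of $\{\alpha\le c^{1/2}L\}$ with a nonnegative bump on $\tilde G$, supported in a symmetric neighbourhood of the identity on which $\alpha$ changes by at most a factor $c_0<c^{1/2}$. You also correctly handle strictness on $\{\alpha\ge cL\}$, compact support via Mahler's criterion and compact torus fibres, and the $L$-independent bound $|D_Z\eta_L|\le\|\tilde D_Z\varphi\|_{L^1}$.

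Two small points should be stated precisely. First, $c_0\to 1$ as the neighbourhood shrinks because $h_0$ multiplies $k$-dimensional Euclidean covolumes by a factor between $\|h_0^{-1}\|_{\mathrm{op}}^{-k}$ and $\|h_0\|_{\mathrm{op}}^{k}$. This uses operator norms, not the entrywise norm $\|g\|$ used to define $\ccG$. Second, the substitution $h\mapsto \exp(tY)h$ needs only left invariance of Haar measure, not unimodularity.
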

Now, given any bounded measurable function $f:\mathbb R^{d} \rightarrow \mathbb R$ with compact support, we define the \emph{truncated Siegel transform} of $f$ as 
\[\hat{f}^{(L)}:=\hat{f}\cdot\eta_L.\]

We recall the basic properties of truncated Siegel transforms, whose proof follows from \cite[Lemma 4.12]{BG} and \cite[Lemma 3.12]{AG23}.
\begin{lemma}\label{lem:pro_f_hat_L}
For $f \in C_c^\infty(\mathbb{R}^{d})$, the truncated Siegel transform $\hat f^{(L)}$ belongs to $C_c^\infty(\tmX)$, and it satisfies
\[\left\|\hat f^{(L)}\right\|_{L^p(\tmX)} \;\leq\; \left\|\hat f\right\|_{L^p(\tmX)} 
   \;\ll_{\mathrm{supp}(f),p}\; \|f\|_{C^0}, 
   \qquad \text{for all } 1 \leq p < d,\]
\[ \left\|\hat f^{(L)}\right\|_{C^0} \;\ll_{\mathrm{supp}(f)}\; L \, \|f\|_{C^0}, \]
\[ \left\|\hat f^{(L)}\right\|_{C^k} \;\ll_{\mathrm{supp}(f)}\; L \, \|f\|_{C^k},\]
\[\mathcal{S}_k \left(\hat f^{(L)}\right) \ll_{\supp (f)} L^{k+1}\|f\|_{C^k},\]
\[ \left\|\hat f - \hat f^{(L)} \right\|_{L^1(\tmX)} \;\ll_{\mathrm{supp}(f),\tau}\; 
   L^{-\tau} \, \|f\|_{C^0}, 
   \qquad \text{for all } \tau < d-1,\]
\[ \left\|\hat f - \hat f^{(L)}\right\|_{L^2(\tmX)} \;\ll_{\mathrm{supp}(f),\tau}\; 
   L^{-(\tau-1)/2} \, \|f\|_{C^0}, 
   \qquad \text{for all } \tau < d-1.
\]
Moreover, the implied constants are uniform when $\mathrm{supp}(f)$ is contained in a fixed compact set.
\end{lemma}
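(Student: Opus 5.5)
The statement to prove is Lemma~\ref{lem:pro_f_hat_L}, and I will derive it from the corresponding lemmas for linear lattices (\cite[Lemma 4.12]{BG}, \cite[Lemma 3.12]{AG23}). The inputs are Propositions~\ref{prop:hat_f_bound_alpha} and~\ref{prop:alpha_in_L_p} and the listed properties of the cut-off $\eta_L$.

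\textbf{Smoothness, support and sup bound.} Smoothness of $\hat f^{(L)}$ holds because $\hat f$ is locally a finite sum of smooth functions, the sum being locally finite since $f$ has compact support. Compact support follows because $\eta_L$ vanishes on $\{\alpha\ge cL\}$. The set $\{\alpha\le cL\}$ is compact: its projection to $\X$ is compact by Mahler's criterion, and the fibres are tori. The $L^p$ bound is immediate from $0\le\eta_L\le1$. The bound $\|\hat f\|_{L^p}\ll\|f\|_{C^0}$ for $p<d$ follows by combining Proposition~\ref{prop:hat_f_bound_alpha} with Proposition~\ref{prop:alpha_in_L_p}. The sup bound comes from Proposition~\ref{prop:hat_f_bound_alpha}, since $\alpha\le cL$ on the support of $\eta_L$.

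\textbf{$C^k$ norm.} Apply Leibniz to $D_Z(\hat f\eta_L)$. For $Y\in\Lie(\tilde G)$ one has $D_Y\hat f=\widehat{Y f}$, where $Yf$ is the derivative of $f$ along the corresponding affine vector field; on a fixed compact support this is bounded by $\|f\|_{C^1}$. Iterating gives $|D_Z\hat f|\ll \|f\|_{C^k}\alpha$ by Proposition~\ref{prop:hat_f_bound_alpha}. Together with $\|\eta_L\|_{C^k}\ll1$ and $\alpha\ll L$ on the support, this yields $\|\hat f^{(L)}\|_{C^k}\ll L\|f\|_{C^k}$.

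\textbf{Sobolev norm.} For $\mathcal S_k$, the integrand carries the weight $\mathrm{ht}^k$. Since $\mathrm{ht}\le\alpha$ (the inverse of the shortest vector is one of the quantities in the supremum defining $\alpha$), we get $\mathrm{ht}^k\ll L^k$ on the support. Hence the $L^2$ part is bounded by $L^k\cdot L\|f\|_{C^k}$, which gives $L^{k+1}$.

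\textbf{Truncation error.} This is the main step. We have $|\hat f-\hat f^{(L)}|\le|\hat f|\,\mathbf 1_{\{\alpha\ge c^{-1}L\}}\ll\|f\|_{C^0}\,\alpha\,\mathbf 1_{\{\alpha\ge c^{-1}L\}}$.
\begin{itemize}
\item For $L^1$: write $\int\alpha\,\mathbf 1_{\{\alpha\ge L'\}}\le \int_{L'}^\infty\mu(\alpha\ge u)\,du+L'\mu(\alpha\ge L')$. Using $\mu(\alpha\ge u)\ll u^{-p}$ with $p=\tau+1<d$ gives $\ll L^{-\tau}$.
\item For $L^2$: use Cauchy--Schwarz or Hölder. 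Choose $p<d$ with $p=\tau+1$ and estimate $\int\alpha^2\mathbf 1_{\{\alpha\ge L'\}}$. When $\tau+1>2$, the layer-cake formula gives $L^{-(\tau-1)}$. When $\tau$ is small, the claimed bound is weaker and follows from Hölder with the $L^p$ integrability of $\alpha$.
\end{itemize}
Taking square roots gives $L^{-(\tau-1)/2}$.

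Uniformity of all constants in $\supp f$ over a fixed compact set is clear, since every constant above depends only on a ball containing the support. The main obstacle I expect is the derivative step: checking that $D_Y$ acting on affine lattices corresponds to a bounded differential operator on $f$, so that Proposition~\ref{prop:hat_f_bound_alpha} applies to derivatives.
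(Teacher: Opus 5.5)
Your argument is correct and is essentially what the paper relies on. The paper gives no independent proof; it defers to \cite[Lemma~4.12]{BG} and \cite[Lemma~3.12]{AG23}, whose arguments run through the same ingredients you use: the bound $|\hat f|\ll\|f\|_{C^0}\,\alpha$ (also applied to $D_Y\hat f=\widehat{Yf}$), the $L^p$-integrability of $\alpha$ for $p<d$, the comparison $\mathrm{ht}\ll\alpha$, and the properties of $\eta_L$. One point to make explicit: for $\tau\le 1$ the $L^2$ truncation bound uses $\alpha\in L^2(\tmX)$, which needs $d\ge 3$. This holds here because $m\ge 2$; for $d=2$ you would instead need the affine second-moment formula of Proposition~\ref{prop:roger}.
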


\subsection{Cumulants}
 To prove the central limit theorem, we employ the method of cumulants. We begin by recalling the definition of cumulants.
\begin{definition}
    Let $(\mathcal{Z}, \nu)$ be a probability space. Given bounded measurable functions $\phi_1, \ldots , \phi_r$ on $\mathcal{Z}$, we define their \textit{joint cumulant} as $$\cum_{\nu}^{(r)}(\phi_1, \ldots, \phi_r) := \sum_{\Pp} (-1)^{|\Pp|-1} (|\Pp|-1)! \prod_{I \in \Pp} \int_{\mathcal{Z}} \left( \prod_{i \in I} \phi_i \right) \, d\nu, $$ where the sum is taken over all partitions $\Pp $ of the set $ \{1, \ldots , r\}.$ For a bounded measurable function $\phi$ on $\mathcal{Z}$, we also set $$\cum^{(r)}_\nu(\phi) := \cum^{(r)}_\nu (\phi, \ldots, \phi).$$
\end{definition}
We have the following classical CLT criterion (see \cite{FS}).
\begin{proposition}[{\cite[Prop.~3.4]{BG}}]\label{prop:Method of Cumulants}
 Let $(F_T)$ be a sequence of real-valued bounded measurable functions such that $$\int_{\mathcal{Z}} F_T \, d\nu=0, \quad  \sigma^2:= \lim_{T \rightarrow \infty} \int_{\mathcal Z} F_T^2 \, d\nu < \infty,$$ and $$\lim_{T \rightarrow \infty} \cum^{(r)}_\nu(F_T) = 0, \text{    for all } r \geq 3.$$ Then for every $\eta \in \R$, $$\nu(\{z \in \mathcal Z: F_T(z) < \eta \}) \rightarrow \mathcal{N}_{\sigma}(\eta) \quad\text{   as } T \rightarrow \infty,$$ 
 where \[
\mathcal N_{\sigma}(\eta)
:= \frac{1}{\sqrt{2\pi\sigma}}
\int_{-\infty}^{\eta} e^{-x^2/(2\sigma)}\,dx
\]
denotes the normal distribution with mean zero and variance $\sigma.$ 
\end{proposition}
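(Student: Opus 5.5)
This is the classical method-of-cumulants criterion (Fréchet–Shohat), and I would prove it by the method of moments. Throughout, write $\kappa_r(T):=\cum^{(r)}_\nu(F_T)$.

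\emph{Step 1: convert cumulants to moments.} Möbius inversion on the partition lattice inverts the defining formula of the joint cumulant:
\[
\int_{\mathcal Z} F_T^r\,d\nu=\sum_{\Pp}\prod_{I\in\Pp}\kappa_{|I|}(T),
\]
where the sum runs over all partitions $\Pp$ of $\{1,\dots,r\}$. By hypothesis, $\kappa_1(T)=\int F_T\,d\nu=0$ and $\kappa_2(T)=\int F_T^2\,d\nu-\kappa_1(T)^2\to\sigma^2$. Also $\kappa_j(T)\to0$ for $j\ge3$. Each moment is a fixed polynomial in finitely many cumulants, so letting $T\to\infty$ in each term gives
\[
\int_{\mathcal Z} F_T^r\,d\nu\longrightarrow \sum_{\Pp \text{ pair partitions}}(\sigma^2)^{r/2}.
\]
The right side vanishes for odd $r$ and equals $(r-1)!!\,\sigma^r$ for even $r$. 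These are exactly the moments of the centered Gaussian with variance $\sigma^2$. (The statement writes $\mathcal N_\sigma$ with variance $\sigma$; I would read $\sigma$ there as the limiting second moment and adjust notation accordingly.)

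\emph{Step 2: moments determine the law.} The Gaussian distribution is determined by its moments, since its moment generating function is entire; Carleman's condition also holds. Moreover, convergence of all moments makes the laws $\nu\circ F_T^{-1}$ tight, because the second moments are bounded. Take any subsequential weak limit. Uniform integrability of $F_T^r$ follows from boundedness of the $(r+1)$-st moments, so every moment passes to this limit. Hence every subsequential limit has Gaussian moments and is therefore the Gaussian. It follows that the whole family $\nu\circ F_T^{-1}$ converges weakly to $\mathcal N$.

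\emph{Step 3: pass to distribution functions.} The limiting distribution function is continuous. Weak convergence therefore gives convergence of $\nu(F_T<\eta)$ at every $\eta$.

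The main subtlety is Step 2, the identification of the limit through moment determinacy and uniform integrability. Since this is standard (see \cite{FS}), I would cite it rather than reprove it.
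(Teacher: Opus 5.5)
Your proposal is correct and follows the same route as the source the paper relies on: the paper gives no proof of its own, deferring to \cite[Prop.~3.4]{BG} and the Fr\'echet--Shohat theorem \cite{FS}, and that argument is exactly yours (moments written as sums of products of cumulants over partitions, Gaussian limit moments, then the method of moments). You are also right that the limit should be the Gaussian with variance $\sigma^2$, so the normalization of $\mathcal N_\sigma$ in the statement is a notational slip; note in addition that your Step~3 needs $\sigma>0$, since for $\sigma=0$ the formula for $\mathcal N_\sigma$ is undefined and the limit law is a point mass at $0$.
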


\section{Proof of Theorem~\ref{thm: main thm}}\label{sec:mainproof}

\subsection{The space $\Y_\xi$}
Throughout this section, fix $\xi \notin \Liou(\R^m)$ and define
\[
\Lambda_{\theta,\xi}
:=
\left\{
\left(
p_1+\sum_{j=1}^{n}\theta_{1j}q_j+\xi_1,\dots,
p_m+\sum_{j=1}^{n}\theta_{mj}q_j+\xi_m,
q_1,\dots,q_n
\right)
: p_i,q_i \in \Z
\right\}.
\]
On identifying $\tilde{\X}$ with the space of affine unimodular lattices in $\R^{m+n}$, we get
\[
\Lambda_{\theta,\xi}
=
\left[ u(\theta), 
\begin{pmatrix}
\xi \\
0
\end{pmatrix}
\right]\tilde{\Gamma}.
\]

Let $\Y_\xi$ denote the collection of affine lattices $\Lambda_{\theta,\xi}$ as $\theta$ ranges over $\Mat$. Geometrically, $\Y_\xi$ is an $mn$-dimensional torus inside $\tilde{\X}$. We denote by $\mu_{\Y_\xi}$ the probability measure on $\Y_\xi$ induced by Lebesgue measure on $\Mat$; equivalently, it is the pushforward of Lebesgue measure on $\mathrm{M}_{m\times n}([0,1])$.

As a consequence of Theorem~\ref{thm: main thm equidistribution}, we obtain the following.

\begin{corollary}\label{cor:Y_mul_mixing}
Fix $\xi \notin \Liou(\R^m)$. Then there exist $\delta = \delta(\xi) > 0$ and $k \in \N$ such that for every $F_0 \in C_c^\infty(\Y_\xi)$, $F_1,\dots,F_\ell \in C_c^\infty(\tilde{\X})$, and $t_1,\dots,t_\ell > 0$, one has
\begin{align*}
\int_{\Y_\xi} F_0(y)\prod_{i=1}^{\ell} F_i(a_{t_i} y)\, d\mu_{\Y_\xi}(y)
&=
\left(\int_{\Y_\xi} F_0 \, d\mu_{\Y_\xi}\right)
\left(\prod_{i=1}^{\ell} \int_{\tilde{\X}} F_i \, d\mu_{\tilde{\X}}\right) \\
&\quad + O_{\xi,\ell}\!\left(
e^{-\delta D(t_1,\dots,t_\ell)} \|F_0\|_{C^1}
\prod_{i=1}^{\ell} \mathcal{S}_k(F_i)
\right),
\end{align*}
where $D(t_1,\dots,t_\ell)$ is as in Theorem~\ref{thm: main thm equidistribution}.
\end{corollary}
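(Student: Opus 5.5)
The plan is to deduce the corollary directly from Theorem~\ref{thm: main thm equidistribution}, applied at the base point $y_\xi := [I_d, (\xi,0)^T]\tilde{\Gamma} = \Lambda_{0,\xi}$. We may write $y_\xi = [g, g v]\tilde{\Gamma}$ with $g = I_d$ and $v = (\xi,0) \in \R^d$.

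First we check that $v \notin \mathrm{Liou}(\R^d)$. If $\|v - \mathbf{p}/q\| < q^{-E}$ with $\mathbf{p}\in\Z^d$, then in particular the first $m$ coordinates give $\|\xi - \mathbf{p}'/q\| < q^{-E}$. So every Liouville approximation of $v$ yields one of $\xi$. Since $\xi \notin \mathrm{Liou}(\R^m)$, the same exponent $E$ fails for $v$.

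Next, since $u(\theta)$ acts trivially on the vector $(\xi,0)^T$ (it only adds $\theta\cdot 0$ to the first block), we have $u(\theta)y_\xi = [u(\theta), (\xi,0)^T]\tilde\Gamma = \Lambda_{\theta,\xi}$. Thus the map $\theta\mapsto u(\theta)y_\xi$ parametrizes $\Y_\xi$. It is $\Z^{mn}$-periodic, because integer translates of $\theta$ are absorbed into $\tilde\Gamma$, and $\mu_{\Y_\xi}$ is the pushforward of Lebesgue measure on $\mathrm{M}_{m\times n}([0,1])$.

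Given $F_0\in C_c^\infty(\Y_\xi)$, lift it to a periodic function $\tilde F_0(\theta) = F_0(u(\theta)y_\xi)$. We cannot plug $\tilde F_0$ directly into Theorem~\ref{thm: main thm equidistribution}, since it is not compactly supported. Instead, fix once and for all a smooth partition-of-unity bump $\rho\in C_c^\infty(\Mat)$ with $\supp\rho\subset \mathrm{M}_{m\times n}([-1,2])=:V$ and $\sum_{k\in\Z^{mn}}\rho(\theta+k)=1$. Periodicity then gives
\[
\int_{\Y_\xi} F_0(y)\prod_i F_i(a_{t_i}y)\,d\mu_{\Y_\xi}
=\int_{\Mat} \rho(\theta)\tilde F_0(\theta)\prod_i F_i(a_{t_i}u(\theta)y_\xi)\,d\theta .
\]

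We now apply Theorem~\ref{thm: main thm equidistribution} with the function $\rho\tilde F_0$, the compact set $V$, and the base point $y_\xi$. The main term is $\int\rho\tilde F_0\,d\theta=\int_{\Y_\xi}F_0\,d\mu_{\Y_\xi}$, again by periodicity. For the error term we use $\|\rho\tilde F_0\|_{C^1}\ll_\rho \|F_0\|_{C^1}$, where the $C^1$ norm on the torus is taken with respect to the $\theta$-coordinates. The constants $\delta$ and $k$ depend on $y_\xi$, hence only on $\xi$ and $\ell$.

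There is no serious obstacle. The only points needing care are that the non-Liouville property transfers from $\xi$ to $(\xi,0)$, and that the periodization step is legitimate.
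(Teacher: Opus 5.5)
Your proposal is correct and is essentially the paper's argument. The paper defers to \cite[Cor.~2.4]{BG}: lift $F_0$ to a $\Z^{mn}$-periodic function, cut it off with a smooth partition-of-unity bump on $\mathrm{M}_{m\times n}(\R)$, and apply Theorem~\ref{thm: main thm equidistribution} at the base point $\Lambda_{0,\xi}$, whose shift $(\xi,0)$ is non-Liouville because $\xi$ is. As with Theorem~\ref{thm: main thm equidistribution} as stated, the resulting $\delta$ a priori also depends on $\ell$; this is harmless because $\ell$ is fixed, and it matches what the paper's deferred argument would give.
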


\begin{proof}
The argument is analogous to that of \cite[Cor.~2.4]{BG}, with Theorem~\ref{thm: main thm equidistribution} in place of \cite[Thm.~2.2]{BG}, and is therefore skipped.
\end{proof}

\subsection{Reduction to a CLT for Birkhoff sums}

We now reduce the proof of Theorem~\ref{thm: main thm} to a central limit theorem for Birkhoff sums of certain Siegel transforms. To make this precise, define for $T \geq 1$
\[
\Omega_T
:=
\left\{
(\mathbf{x},\mathbf{y}) \in \R^{m+n} :
1 \le \|\mathbf{y}\| \le T,\ 
|x_i| < \vartheta_i \|\mathbf{y}\|^{-w_i},\ i=1,\dots,m
\right\}.
\]
Then
\begin{equation}
\label{eq:sec:clt 1}
\Delta_T^\xi(\theta)
=
\#\bigl(\Lambda_{\theta,\xi} \cap \Omega_T\bigr) + O(1),
\end{equation}
since the only integer points not captured by $\#(\Lambda_{\theta,\xi} \cap \Omega_T)$ correspond to integral vectors $(p,q)$ with $\|q\| \leq 1$, which contribute only a bounded error.

Now fix $T = e^N$ with $N \in \N$. Then
\[
\Omega_{e^N}
=
\bigsqcup_{s=0}^{N-1} b^{-s}\Omega_e,
\]
for $b = \left(\diag(e^{w_1},\dots,e^{w_m},e^{-1},\dots,e^{-1}),\,0\right),$ where $w_1,\dots ,w_m>0$ are weights as in Theorem \ref{thm: main thm}, i.e., $w_1+\dots+w_m=n.$

Hence
\begin{equation}
\label{eq:sec:clt 2}
\#\bigl(\Lambda_{\theta,\xi} \cap \Omega_{e^N}\bigr)
=
\sum_{s=0}^{N-1} \widehat{\ind}(b^s \Lambda_{\theta,\xi}),
\end{equation}
where $\ind$ denotes the indicator function of $\Omega_e$.

Combining \eqref{eq:sec:clt 1} and \eqref{eq:sec:clt 2}, we see that the proof of Theorem~\ref{thm: main thm} reduces to establishing a central limit theorem for the sums in \eqref{eq:sec:clt 2}. We formulate this precisely in the following proposition.

\begin{proposition}\label{thm:main_reduced}
Assume $m \ge 2$ and define
\begin{equation}\label{equ:F_N}
F_N
:=
\frac{1}{\sqrt{N}} \sum_{s=0}^{N-1}
\left(
\widehat{\ind} \circ b^s
-
\mu_{\Y_\xi}(\widehat{\ind} \circ b^s)
\right).
\end{equation}
Then for every $\eta \in \R$,
\[
\mu_{\Y_\xi}\bigl(\{y \in \Y_\xi : F_N(y) < \eta\}\bigr)
\longrightarrow
\mathcal{N}_{\sigma_{m,n}}(\eta)
\quad \text{as } N \to \infty,
\]
where $\sigma_{m,n}$ is as in Theorem~\ref{thm: main thm}.
\end{proposition}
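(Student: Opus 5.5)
We follow the method of cumulants of Bj\"orklund--Gorodnik~\cite{BG}: we verify the hypotheses of Proposition~\ref{prop:Method of Cumulants} for $F_N$ on $(\Y_\xi,\mu_{\Y_\xi})$, with the effective multi-equidistribution of Corollary~\ref{cor:Y_mul_mixing} replacing the multiple mixing used there. Since $\widehat{\ind}$ is neither smooth nor bounded, we first replace $\ind$ by a smooth approximation $f_\e\in C_c^\infty(\R^d)$ and $\widehat{f_\e}$ by the truncated Siegel transform $\widehat{f_\e}^{(L)}$, with $\e=N^{-a}$ and $L=N^{b}$ for small $a,b>0$. This gives
\[
\widetilde F_N:=\frac{1}{\sqrt N}\sum_{s=0}^{N-1}\Bigl(\widehat{f_\e}^{(L)}\circ b^s-\mu_{\Y_\xi}\bigl(\widehat{f_\e}^{(L)}\circ b^s\bigr)\Bigr).
\]
We then show $\|F_N-\widetilde F_N\|_{L^1(\mu_{\Y_\xi})}\to 0$, which suffices because convergence in distribution is stable under $L^1$-small perturbations.

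\textbf{Approximation errors.} The smoothing error is controlled by Proposition~\ref{prop:hat_f_bound_alpha}, Lemma~\ref{lem:Y_alpha_bound} (for $s\ge\kappa\log L$; the initial $O(\log N)$ terms contribute $O(\log N/\sqrt N)$), and single equidistribution, i.e.\ the case $\ell=1$ of Corollary~\ref{cor:Y_mul_mixing}. It is bounded by $\sqrt N(\e+\text{small})$. The truncation error uses $\mu_{\Y_\xi}(\{\alpha\circ b^s\ge L\})\ll L^{-p}$ from Lemma~\ref{lem:Y_alpha_bound} together with H\"older's inequality and an $L^2$ bound on $\widehat{f_\e}\circ b^s$ over $\Y_\xi$. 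That $L^2$ bound again comes from Lemma~\ref{lem:Y_alpha_bound}, since $\alpha\in L^p$ with $p<d$ and $d\ge 3$ allows $p>2$. The truncation error is therefore $\ll\sqrt N L^{-\tau}$ for some $\tau>1/2$, which tends to $0$ for suitable $b$. This step is where $m\ge 2$ (hence $d\ge 3$) is used.

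\textbf{Variance.} We expand $\|\widetilde F_N\|_2^2=\frac1N\sum_{s,t}\mathrm{Cov}(\phi\circ b^s,\phi\circ b^t)$ with $\phi=\widehat{f_\e}^{(L)}$, and apply Corollary~\ref{cor:Y_mul_mixing} with $\ell=2$ and $F_0=1$. For $s,t\ge c\log N$ the covariance equals $\mu_{\tmX}(\phi\circ b^{t-s}\cdot\phi)-\mu(\phi)^2+O(e^{-\delta\min(s,|s-t|)}L^{O(1)}\e^{-O(1)})$. Pairs with $|s-t|\le c\log N$ or $\min(s,t)\le c\log N$ contribute $o(1)$ after the $1/N$ normalisation. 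The main term involves correlations on $\tmX$. By Rogers' formula (Proposition~\ref{prop:roger}), for the affine space these are
\[
\mu_{\tmX}(\hat f\,\hat g)-\mu(\hat f)\mu(\hat g)=\int f g,
\]
with no extra lattice-sum terms because the shifts are generic. Since $b^{-r}\Omega_e\cap\Omega_e=\emptyset$ for $r\ne0$, only the diagonal survives, and the variance is $\int\ind^2=\mathrm{vol}(\Omega_e)=2^m\vartheta_1\cdots\vartheta_m\omega_n$. This agrees with the stated $\sigma_{m,n}^2=C_{m,n}$.

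\textbf{Higher cumulants (main obstacle).} For $r\ge3$ we bound $\cum^{(r)}(\widetilde F_N)=N^{-r/2}\sum_{s_1,\dots,s_r}\cum^{(r)}(\phi\circ b^{s_1},\dots,\phi\circ b^{s_r})$ using the \cite{BG} decomposition of $\{0,\dots,N-1\}^r$ into regions, according to gaps between the $s_i$ at scales $0=\beta_0<\beta_1<\dots<\beta_{r}$ with $\beta_j\asymp\log N$. In regions with a large separation, Corollary~\ref{cor:Y_mul_mixing} shows the cumulant splits as a sum of products over the clustered tuples up to $O(e^{-\delta\beta}L^{O(r)}\e^{-O(r)})$. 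A cumulant of separated blocks vanishes, so these regions are negligible. In the remaining region all $s_i$ lie within $O(\log N)$ of each other. That region has $O(N(\log N)^{r-1})$ points, and each cumulant there is bounded by moments $\|\phi\|_{L^r}$-type quantities, giving $\ll L^{r-2}$ via $\|\phi\|_{C^0}\ll L$ and $\|\phi\|_{L^2}\ll1$. The total is $N^{1-r/2}(\log N)^{r-1}L^{r-2}\to0$ for small $b$. The delicate point is balancing the polynomial losses $L^{O(1)}\e^{-O(1)}$ from $\mathcal S_k$ (Lemma~\ref{lem:pro_f_hat_L}) against the exponential gains $e^{-\delta\beta_j}$, by choosing the $\beta_j$ as large multiples of $\log N$ while keeping the region counts polylogarithmic. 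Once this is done, Proposition~\ref{prop:Method of Cumulants} gives the claim.
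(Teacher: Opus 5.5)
\textbf{The gap: small times.} Your $\widetilde F_N$ keeps every term $s=0,\dots,N-1$. The error in Corollary~\ref{cor:Y_mul_mixing} is $e^{-\delta D(t_1,\dots,t_\ell)}$ with $D=\min\{t_i,\,|t_i-t_j|\}$. So it gives no decay as soon as one of the times is small, however well separated the blocks are.

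In the cumulant step, the tuples with $\min_i s_i\le\beta$ (with $\beta\asymp\log N$) number about $\beta N^{r-1}$. For these you have only trivial bounds, and even an $O(1)$ bound per cumulant gives $N^{-r/2}\cdot\beta N^{r-1}\to\infty$ for $r\ge3$. So your claim that the separated regions are negligible is unproved exactly there.

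The variance has the same problem. With $F_0=1$ the corollary says nothing about pairs with $\min(s,t)\le c\log N$. Bounding their covariances by $O(1)$ gives $\frac1N\cdot O(N\log N)=O(\log N)$, not $o(1)$. Separately, pairs with $|s-t|\le c\log N$ carry the main term, so they cannot contribute $o(1)$. The error $e^{-\delta\min(s,|s-t|)}$ you wrote is useless for them; there you should apply the case $\ell=1$ to $\phi\cdot(\phi\circ b^{t-s})$ and pay $e^{O(|t-s|)}$ in the Sobolev norm.

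The paper avoids all this by starting the sum in \eqref{equ:F_N_L} at $s=M$, with $M=(\log N)(\log\log N)$ as in \eqref{eq:choice of M}. Then $M\gg\gamma$ and $M\gg\log L$ as required in \eqref{equ:M_logL}, so every time fed into the corollary is large. The discarded block costs only $\frac{2M}{\sqrt N}\sup_s\|\widehat{\ind}\circ b^s\|_{L^1(\Y_\xi)}\to0$, as in the proof of Lemma~\ref{lem:F_N_F_N_epsilon_L}. You already observe that the first $O(\log N)$ terms are $L^1$-negligible; you must actually remove them from $\widetilde F_N$. Alternatively, you could put the block of small times into $F_0$ and balance the growth of $\|F_0\|_{C^1}$ against the decay coming from the separation.

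\textbf{Two further repairs.} First, your truncation estimate and your clustered bound $L^{r-2}$ both rest on the uniform bound $\sup_{s}\|\widehat{f_\e}\circ b^s\|_{L^2(\Y_\xi)}\ll1$. This does not follow from Lemma~\ref{lem:Y_alpha_bound}. That lemma controls $\{\alpha\circ b^s\ge L\}$ only for $L\le e^{s/\kappa}$. Above that level you only get the tail $e^{-ps/\kappa}$, set against the a priori size $e^{O(s)}$ of $\alpha\circ b^s$ on $\Y_\xi$, and this does not bound the second moment. The paper proves the $L^1$ and $L^2$ bounds by a direct count over pairs $(\bar q,\bar\ell)$ in Lemma~\ref{prop:hatf_L_1_2}, which is stated under $m\ge2$.

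Second, your parameters are not small.
\begin{itemize}
\item The smoothing error $\sqrt N\e$ forces $\e=o(N^{-1/2})$.
\item Your clustered contribution $N^{1-r/2}(\log N)^{r-1}L^{r-2}$ forces $b<1/2$.
\item The truncation error $\sqrt N L^{-p/2}$ forces $b>1/p$.
\end{itemize}
The last two are compatible only because $d\ge3$ lets you take $p\in(2,d)$, i.e.\ you need $\tau=p/2>1$, not merely $\tau>1/2$.

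With these corrections, your treatment of the clustered region is a valid and simpler substitute for the paper's bound $L^{(r-(m+n-1))^+}$ in \eqref{eq:cum evaluation}. You bound moments directly on $\Y_\xi$ by $\|\phi\|_{C^0}^{|I|-2}\|\phi\circ b^{s}\|^2_{L^2(\Y_\xi)}$, whereas the paper's bound requires transferring the moments to $\tmX$.
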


\begin{proof}[Proof of Theorem~\ref{thm: main thm} assuming Proposition~\ref{thm:main_reduced}]
The argument is analogous to that of \cite[Thm.~1.2]{BG}, with \cite[Thm.~6.1]{BG} replaced by Proposition~\ref{thm:main_reduced} and \cite[Lem.~6.3]{BG} replaced by Lemma~\ref{lem:final}, and is therefore skipped.
\end{proof}
The remainder of this section is devoted to the proof of Proposition~\ref{thm:main_reduced}.

\subsection{Approximation by smooth compactly supported functions}

To prove Proposition~\ref{thm:main_reduced}, we employ the method of cumulants (Proposition~\ref{prop:Method of Cumulants}) together with the mixing estimate in Corollary~\ref{cor:Y_mul_mixing}. In order to apply Corollary~\ref{cor:Y_mul_mixing}, we first approximate $F_N$ by smooth, compactly supported functions. This is the goal of the present subsection.

To make this precise, we fix a family of non-negative functions $f_{\varepsilon} \in C_c^{\infty}(\R^{m+n})$ such that $\supp(f_\varepsilon)$ is contained in an $\varepsilon$-neighbourhood of $\Omega_e$ and
\begin{equation}\label{equ:chi_appro_f_epsilon}
    \ind \leq f_{\varepsilon} \leq 1, \,\, \|f_{\varepsilon}-\ind\|_{L^1(\mathbb R^{m+n})} \ll \varepsilon,\,\,\|f_{\varepsilon}-\ind\|_{L^2(\mathbb R^{m+n})} \ll \varepsilon^{1/2},\,\, \|f_{\varepsilon}\|_{C^k} \ll \varepsilon^{-k}.
\end{equation}

We then define
\begin{equation}\label{equ:F_N_L}
\tilde{F}_N^{(\varepsilon,L)}
:=
\frac{1}{\sqrt{N}} \sum_{s=M}^{N-1}
\left(
\widehat{f}_{\varepsilon}^{(L)} \circ b^s
-
\mu_{\Y_{\xi}}(\widehat{f}_{\varepsilon}^{(L)} \circ b^s)
\right),
\end{equation}
where the parameters $M = M(N) \to \infty$, $\varepsilon = \varepsilon(N) \to 0$, and $L = L(N) \to \infty$.

The following lemma shows that $\tilde{F}_N^{(\varepsilon,L)}$ and $F_N$ have the same limiting distribution as $N \to \infty$. 

\begin{lemma}\label{lem:F_N_F_N_epsilon_L} Let $F_N$ and $\tilde{F}_N^{(\varepsilon,L)}$ be defined as in \eqref{equ:F_N} and \eqref{equ:F_N_L}. Assume that the parameters $M,\varepsilon,L$ satisfy 
\begin{equation}\label{equ:M_epsilon_L}
    M=o(\log N), \,\, \varepsilon=o(N^{-1/2}), \text{ and } N=o(L^p) \text{ for some } p<m+n.
\end{equation}
Then
\[ \|F_N-\tilde{F}_N^{(\varepsilon,L)}\|_{L^1(\mathcal{Y}_{\xi})} \rightarrow 0 \text{ as } N \rightarrow \infty.\]
\end{lemma}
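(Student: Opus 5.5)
We split the difference into three pieces via the triangle inequality and control each in $L^1(\mathcal{Y}_\xi)$:
\[
F_N-\tilde F_N^{(\varepsilon,L)} = \underbrace{\tfrac{1}{\sqrt N}\sum_{s=0}^{M-1}\bigl(\widehat{\ind}\circ b^s-\mu_{\Y_\xi}(\cdot)\bigr)}_{(I)}
+\underbrace{\tfrac{1}{\sqrt N}\sum_{s=M}^{N-1}\bigl((\widehat{\ind}-\widehat{f}_\varepsilon)\circ b^s-\mu_{\Y_\xi}(\cdot)\bigr)}_{(II)}
+\underbrace{\tfrac{1}{\sqrt N}\sum_{s=M}^{N-1}\bigl((\widehat{f}_\varepsilon-\widehat{f}_\varepsilon^{(L)})\circ b^s-\mu_{\Y_\xi}(\cdot)\bigr)}_{(III)}.
\]
Each term is bounded by twice the $L^1(\mu_{\Y_\xi})$ norm of the uncentered sum, so it suffices to bound $\int_{\Y_\xi}|\phi\circ b^s|\,d\mu_{\Y_\xi}$ uniformly in $s$ for the relevant functions $\phi$.

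\textbf{Term (I).} We need $\sup_s \|\widehat{\ind}\circ b^s\|_{L^1(\Y_\xi)}<\infty$. Then $\|(I)\|_{L^1}\ll M/\sqrt N\to 0$, since $M=o(\log N)$. By Proposition~\ref{prop:hat_f_bound_alpha}, $|\widehat{\ind}|\ll\alpha$. The bound $\int_{\Y_\xi}\alpha(b^s y)\,d\mu_{\Y_\xi}\ll 1$ follows from the layer-cake formula combined with Lemma~\ref{lem:Y_alpha_bound}, which gives the tail bound $L^{-p}$ with $p>1$ as soon as $s\ge\kappa\log L$. For the levels $L>e^{s/\kappa}$ we fall back on the trivial estimate $\alpha(b^s y)\ll e^{cs}$, valid on the torus $\Y_\xi$ (its projection is compact). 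Since $\alpha$ depends only on the linear part, these are exactly the homogeneous estimates of \cite{BG}.

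\textbf{Term (II).} Since $0\le f_\varepsilon-\ind$, we have $0\le \widehat{f}_\varepsilon-\widehat{\ind}=\widehat{(f_\varepsilon-\ind)}$, and $f_\varepsilon-\ind$ is supported on a fixed compact set. We need
\[
\int_{\Y_\xi}\widehat{(f_\varepsilon-\ind)}(b^s y)\,d\mu_{\Y_\xi}\ll \varepsilon^{c}
\]
uniformly for $s\ge M$; then $\|(II)\|_{L^1}\ll \sqrt N\,\varepsilon^{c}$. This is the main obstacle. The Siegel mean value theorem (Proposition~\ref{prop:roger}) gives $\|f_\varepsilon-\ind\|_{L^1}\ll\varepsilon$ only for the Haar average, so we must pass from $\mu_{\Y_\xi}\circ b^{-s}$ to $\mu_{\tmX}$. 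We would truncate at height $L'$ and apply Corollary~\ref{cor:Y_mul_mixing} with $\ell=1$ to the smooth function $\widehat{f_{\varepsilon}-\ind}$, smoothed to $\widehat{g}^{(L')}$ with $g\ge f_\varepsilon-\ind$ supported in a $2\varepsilon$-neighbourhood of $\partial\Omega_e$ and $\|g\|_{C^k}\ll\varepsilon^{-k}$. This yields
\[
\varepsilon + e^{-\delta s}(L')^{k+1}\varepsilon^{-k} + (L')^{-(p-1)}.
\]
We then choose $L'$ as a small power of $\varepsilon^{-1}$. Since $M\to\infty$ only slowly, the mixing error $e^{-\delta M}$ may not beat powers of $\varepsilon^{-1}$. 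The fallback is the one in \cite{BG}: the bound $\int \widehat{h}(b^s y)\,d\mu_{\Y_\xi}\ll \|h\|_{L^1}$ + (boundary-regularity terms) holds uniformly in $s$ by a direct lattice-point count on the torus, using that the fibre over $\theta$ is a translate of $u(\theta)\Z^d$ and that the shift $\xi$ does not affect volume counts. We expect this to give $\varepsilon^c$ with $\varepsilon=o(N^{-1/2})$ arranged (possibly after shrinking $\varepsilon$ by a power) so that $\sqrt N\varepsilon^c\to 0$.

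\textbf{Term (III).} We have $0\le \widehat{f}_\varepsilon-\widehat{f}_\varepsilon^{(L)}\le \widehat{f}_\varepsilon\,\mathbf 1_{\{\alpha\ge c^{-1}L\}}\ll \alpha\,\mathbf 1_{\{\alpha\ge c^{-1}L\}}$. Its $\mu_{\Y_\xi}\circ b^{-s}$-integral is $\ll L^{1-p}$ by Lemma~\ref{lem:Y_alpha_bound}, for all $s\ge \kappa\log L$. For $s\ge M$ this requires $M\ge\kappa\log L$. If $M$ is too small for that, we split off the range $M\le s<\kappa\log L$ and treat it as in Term (I), at a cost $\ll \log L/\sqrt N$, which tends to $0$ since $\log L\ll\log N$. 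Summing over the remaining $s$ gives $\sqrt N\,L^{1-p}$ for any $p<m+n$. The hypothesis $N=o(L^{p})$ with $p$ close to $m+n$ makes $\sqrt N L^{1-p}\to0$, since $m+n\ge 3$ ensures $p-1>p/2$.
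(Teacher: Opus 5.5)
Your three-term decomposition matches the paper's. The problem is that none of the three estimates is actually established: Term (II) is only announced, and the bounds for Terms (I) and (III) do not close with the tools you cite.

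\textbf{Term (II).} This is the decisive estimate, and it is not proved. The mixing route cannot work, as you note: $e^{-\delta M}=N^{-o(1)}$ while $\varepsilon^{-k}\ge N^{k/2}$. The fallback is only announced, with an unspecified power $\varepsilon^{c}$. ``Shrinking $\varepsilon$ by a power'' is not permitted, because $\varepsilon=o(N^{-1/2})$ is a hypothesis of the lemma. You therefore need a bound linear in $\varepsilon$, plus an error summable in $s$.

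The missing computation (the paper's Lemma~\ref{prop:f_epsilon_chi_hat}) is an exact unfolding of the torus average, so no passage to $\mu_{\tmX}$ is needed. Take $h\ge 0$ with $h(\bar x,0)=0$. For each $\bar q\neq 0$, the map $\theta\mapsto\theta\bar q+\xi$ pushes Lebesgue measure on $[0,1]^{mn}$ to Haar measure on $\R^m/\Z^m$. Summing over $\bar p$ therefore gives
\[
\int_{\Y_\xi}\hat h(b^s y)\,d\mu_{\Y_\xi}(y)=e^{-ns}\sum_{\bar q\in\Z^n\setminus\{0\}}H(e^{-s}\bar q),\qquad H(\bar y):=\int_{\R^m}h(\bar x,\bar y)\,d\bar x,
\]
independently of $\xi$. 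Now take $h=f_\varepsilon-\chi$. When $1\le\|\bar y\|\le e$, only an $O(\varepsilon)$-collar of the box in $\bar x$ contributes, so $H\ll\varepsilon$. On the two shells $1-\varepsilon\le\|\bar y\|<1$ and $e<\|\bar y\|\le e+\varepsilon$ one has $H\ll 1$, and these shells contain $\ll e^{ns}(\varepsilon+e^{-s})$ points of $e^{-s}\Z^n$. Hence the average is $\ll\varepsilon+e^{-s}$, and $\|(II)\|_{L^1}\ll\sqrt N\,\varepsilon+N^{-1/2}\to 0$.

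\textbf{Terms (I) and (III).} Lemma~\ref{lem:Y_alpha_bound} controls $\mu_{\Y_\xi}(\alpha\circ b^s\ge t)$ only for $t\le e^{s/\kappa}$. The trivial bound $\alpha(b^sy)\ll e^{Cs}$ leaves the levels $e^{s/\kappa}<t\le e^{Cs}$ uncovered, and there you only get $e^{Cs}e^{-ps/\kappa}$. Nothing guarantees $C\kappa<p$; $\kappa$ comes from an equidistribution rate and is typically large. Your bound $\int_{\Y_\xi}(\alpha\mathbf{1}_{\{\alpha\ge c^{-1}L\}})(b^sy)\,d\mu_{\Y_\xi}\ll L^{1-p}$ in (III) has the same defect, since it needs tails at every level $t\ge c^{-1}L$.

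The paper avoids moments of $\alpha$ altogether:
\begin{itemize}
\item For (I) it uses the uniform bound of Lemma~\ref{prop:hatf_L_1_2}. Since $\hat\chi\ge0$, the unfolding above even gives it directly: $\|\hat\chi\circ b^s\|_{L^1(\Y_\xi)}=2^m\vartheta_1\cdots\vartheta_m\sum_{e^s\le\|\bar q\|\le e^{s+1}}\|\bar q\|^{-n}\ll1$.
\item For (III) it applies Cauchy--Schwarz with $\sup_s\|\hat f_\varepsilon\circ b^s\|_{L^2(\Y_\xi)}\ll1$ from Lemma~\ref{prop:hatf_L_1_2}. The tail is then needed only at the single level $c^{-1}L$, giving $\sqrt N L^{-p/2}$, which is exactly what $N=o(L^p)$ is designed for.
\end{itemize}
That uniform $L^2$ bound is a genuine second-moment lattice-point computation over the torus. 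It is the input your argument is missing.

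On the other hand, you are right that Lemma~\ref{lem:Y_alpha_bound} needs $s\ge\kappa\log(c^{-1}L)$, which $s\ge M$ with $M=o(\log N)$ does not ensure. The paper's proof does not address this. Splitting off the range $M\le s<\kappa\log(c^{-1}L)$ and using a uniform $L^1$ bound there costs $O(\log L/\sqrt N)$. This is a valid repair for $L=N^q$.
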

\begin{proof} Note that
\begin{align}\label{equ:tri}
   & \quad  \|F_N-\tilde{F}_N^{(\varepsilon,L)}\|_{L^1(\mathcal{Y}_{\xi})}  \nonumber \\ & \leq \frac{2}{\sqrt{N}} \sum_{s=0}^{M-1}\|\hat{\chi}\circ b^s \|_{L^1(\Y_{\xi})} + \frac{2}{\sqrt{N}} \sum_{s=M}^{N-1}\|(\hat{\chi}-\hat{f}_{\varepsilon}^{(L)})\circ b^s \|_{L^1(\Y_{\xi})} \nonumber \\ & \leq \frac{2M}{\sqrt{N}} \sup_{s\geq 0}\int_{\Y_{\xi}}|\hat{\chi}\circ b^s|\, d\mu_{\Y_{\xi}}+ \frac{2}{\sqrt{N}} \sum_{s=M}^{N-1}\|(\hat{\chi}-\hat{f}_{\varepsilon} )\circ b^s \|_{L^1(\Y_{\xi})} +\frac{2}{\sqrt{N}} \sum_{s=M}^{N-1}\|(\hat{f}_{\varepsilon}-\hat{f}_{\varepsilon}^{(L)})\circ b^s \|_{L^1(\Y_{\xi})}  
\end{align}
In view of Lemma~\ref{prop:hatf_L_1_2} and \eqref{equ:M_epsilon_L}, the first term of \eqref{equ:tri} goes to zero as $N\rightarrow \infty.$ For the second term, we use Lemma~\ref{prop:f_epsilon_chi_hat} and \eqref{equ:M_epsilon_L}, to get that 
\[\frac{2}{\sqrt{N}} \sum_{s=M}^{N-1}\|(\hat{\chi}-\hat{f}_{\varepsilon} )\circ b^s \|_{L^1(\Y_{\xi})} \ll N^{1/2}(\varepsilon+e^{-M}) \rightarrow 0\text{ as } N \rightarrow \infty.\]

For third term, observe that
 \begin{align}
& \quad \frac{2}{\sqrt{N}} \sum_{s=M}^{N-1}   \|(\hat{f}_{\varepsilon} -\hat{f}_{\varepsilon}^{(L)}) \circ b^s\|_{L^1(\Y_{\xi})} \nonumber \\ &\leq  \frac{2}{\sqrt{N}} \sum_{s=M}^{N-1}\int_{\{\alpha(b^s y) \geq c^{-1}L\}} |\hat{f_{\varepsilon}}(b^s y)| \, d\mu_{\Y_{\xi}}(y) \nonumber \\ &\leq  \frac{2}{\sqrt{N}} \sum_{s=M}^{N-1}\|\hat{f_{\varepsilon}}\circ b^s\|_{L^2(\Y_{\xi})} \,\,\mu_{\Y_{\xi}}(\{y \in \Y_{\xi}:\alpha(b^s y)\geq c^{-1}L\})^{1/2}, \,\, \text{by Cauchy-Schwarz inequaity} \nonumber \\ &\ll_p N^{1/2}L^{-p/2} \,\, \text{for some } p<m+n,   
 \end{align}
which tends to zero as $N\rightarrow \infty$, where in the last inequality we use Lemma~\ref{prop:f_epsilon_chi_hat} and~\ref{lem:Y_alpha_bound}.

\noindent  The desired conclusion now follows by combining the estimates established above.
\end{proof}

\medskip

\subsection{Estimating Cumulants}
\label{subsec:Estimate cum}
In this subsection, we compute the cumulants of $\tilde{F}_N^{(\varepsilon,L)}$ defined as in~\eqref{equ:F_N_L}. To do the same, fix $r \geq 3$. 

Using the computation in \cite[\S5.1 and \S6]{BG}, replacing Corollary \ref{cor:Y_mul_mixing} in place of \cite[Corollary~3.3]{BG} and using Remark \ref{rem:sobnorm_pro} in the relevant steps, the line below equation (6.10) of \cite{BG} yields
\begin{align}
\label{eq:cum evaluation}
    \left|\cum^{(r)}_{\mu_{{\Y}_\xi}}\left(\tilde{F}_N^{(\varepsilon,L)}\right) \right| &\ll N^{r/2} e^{-\delta \gamma} \mathcal{S}_k\left( \hat{f}_{\varepsilon}^{(L)}  \right)^r + N^{1-r/2} \gamma^{r-1} L^{(r-(m+n-1))^+} \|f_{\varepsilon}\|_{C^0}^{r} \nonumber\\
    &\ll N^{r/2} e^{-\delta \gamma} L^{r(k+1)} \|f_{\varepsilon}\|_{C^k}^r+ N^{1-r/2}\gamma^{r-1} L^{(r-(m+n-1))^+} \|f_{\varepsilon}\|_{C^0}^{r} \nonumber\\ & \ll  N^{r/2} e^{-\delta \gamma} L^{r(k+1)} \varepsilon^{-rk}+ N^{1-r/2}\gamma^{r-1} L^{(r-(m+n-1))^+},
\end{align}
where we use the notation $x^+=\max \{0,x\}$. The second and third inequalities above follow from Lemma \ref{lem:pro_f_hat_L} and \eqref{equ:chi_appro_f_epsilon}, respectively, provided that
\begin{equation}\label{equ:M_logL}
M \gg \log L \quad \text{and} \quad M \gg \gamma.
\end{equation}
Since $m\geq 2,$ we have
\[\frac{(r-(m+n-1))^+}{m+n}<r/2-1 \quad \forall \, r \geq 3.\]
Hence one can choose $q>\frac{1}{m+n}$ such that
\[q(r-(m+n-1))^+<r/2-1 \quad \forall \, r\geq 3.\]
Fix such a $q$, and let
\begin{align}
    \label{eq:choice of N}
    L=N^q.
\end{align}
Then we have
\begin{align}
\left|\cum^{(r)}_{\mu_{{\Y}_\xi}}\left(\tilde{F}_N^{(\varepsilon,L)}\right) \right|  & \ll  N^{r/2+qr(k+1)} e^{-\delta \gamma}  \varepsilon^{-rk}+ N^{q(r-(m+n-1))^+-(r/2-1)}\gamma^{r-1} . 
\end{align}
Further choosing $\gamma=c_r (\log N)$, where $c_r $ is sufficiently large, we get that
\[N^{q(r-(m+n-1))^+-(r/2-1)}\gamma^{r-1}  \rightarrow 0.\]
Hence 
\[\cum_{\mu_{\Y_{\xi}}}^{(r)} \left(\tilde{F}_N^{(\varepsilon,L)}\right) \rightarrow 0 \,\, \text{as } N \rightarrow \infty \text{ for } r\geq 3,\]
provided we have
\begin{equation}\label{equ:N_epsi_e_del}
   N^{r/2+qr(k+1)}    \varepsilon^{-rk}=o(e^{\delta \gamma}) .
\end{equation}
To make sure \eqref{equ:N_epsi_e_del} holds, we choose
\begin{align}
    \label{eq:choice of e}
    \varepsilon(N)=1/N^{\beta} \quad \text{with }  \beta<\frac{1}{k}\left(\frac{\delta c_r}{r}-\frac{1}{2}-q(k+1)\right), 
    \end{align}
 provided $c_r>\frac{r}{\delta}(k+1)(q+1/2)$  is sufficiently large.
Finally choose 
\begin{align}
    \label{eq:choice of M}
    M(N)=(\log N) (\log \log N)
\end{align}
so that \eqref{equ:M_logL} holds.
With the above choice of parameters $\varepsilon(N),L(N),\text{ and }M(N)$, we have proved that the $r$-th cumulants vanish as $N \rightarrow \infty$ for $r \geq 3$.

\subsection{Estimating variance}
\label{subsec:Estimate var}
In this section, we compute the variance of $\tilde{F}_N^{(\varepsilon,L)}$ with $L(N)$, $\varepsilon(N)$, and $M(N)$ chosen as in~\eqref{eq:choice of N}, \eqref{eq:choice of e}, and \eqref{eq:choice of M} respectively. The computation of variance is also similar to \cite[\S6]{BG}, we point out the differences. 

By analogous computation to \cite[Theorem 6.1]{BG} (look at line after $(6.16)$ of \cite{BG}), we get
\begin{equation}
\label{aff eq: main variance}
\left\|\Tilde{F}^{(\e,L)}_N\right\|_{L^2(\Y_{\xi})}^2 =\Theta^{(\e)}_\infty(0)+2\sum_{s=1}^{K-1}\Theta^{(\e)}_\infty (s)+o(1),\end{equation}
where $$\Theta^{(\e)}_\infty(s):=\int_{\tilde{\X}} (\hat f_\e\circ b^s)\hat f_\e\, d\mu_{\tilde{\X}} -\mu_{\tilde{\X}}(\hat f_\e)^2,$$
and $K=K(N):=c_1 (\log N)$ is a parameter ($c_1$ is sufficiently large).

\noindent In view of Proposition \ref{prop:roger}, we have
\begin{align*}
\Theta^{(\e)}_\infty (s) &= \int_{\tilde{\X}} (\hat f_\e\circ b^s)\hat f_\e\, d\mu_{\tilde{\X}} -\mu_{\tilde{\X}}(\hat f_\e)^2 \\
&=  \frac{1}{2} \left(\int_{\tilde{\X}}(\hat{f}_\e \circ b^s + \hat{f}_\e)^2\, d\mu_{\tilde{\X}} - 2\int_{\tilde{\X}}( \hat{f}_\e)^2\, d\mu_{\tilde{\X}}\right)  -\mu_{\tilde{\X}}(\hat f_\e)^2  \\
    &= \frac{1}{2} \left(\left( \int_{\R^d}(f_\e \circ b^s + f_\e)\,\, dx \right)^2 + \int_{\R^d}(f_\e \circ b^s + f_\e )^2 \,\, dx \right)\\&- \left( \int_{\R^d} f_\e \,\,  dx \right)^2 - \int_{\R^d}( f_\e )^2 \,\,  dx - \left( \int_{\R^d} f_\e \,\,  dx \right)^2 \\
    &= \int_{\R^{m+n}} f_\e(b^s x) f_\e(x)\,  dx.
\end{align*}
We claim that
\begin{equation}\label{equ:Theta_infty_epsilon_minus_Theta}
    | \Theta^{(\e)}_\infty (s) - \Theta_{\infty}(s)| \ll \varepsilon^{1/2},
\end{equation}
where \[\Theta_{\infty}(s)= \int_{\R^{m+n}} \ind(b^s x) \ind(x) \,  dx. \]

\noindent Indeed, by using the Cauchy–Schwarz inequality and \eqref{equ:chi_appro_f_epsilon}, we get
\begin{align*}
    | \Theta^{(\e)}_\infty (s) - \Theta_{\infty}(s)| &\leq \int_{\R^{m+n}} |f_\e(b^s x)- \ind(b^s x )| |f_\e(x)|\,  dx + \int_{\R^{m+n}} |\ind(b^s x)| |f_\e(x)- \ind(x)|\,  dx \\
    &\leq \|f_\e - \ind\|_{L^2(\R^{m+n})} \|f_\e\|_{L^2(\R^{m+n})} + \|\ind\|_{L^2(\R^{m+n})} \|f_\e- \ind\|_{L^2(\R^{m+n})} \\
    &\ll \e^{1/2}.
\end{align*}
Since $\varepsilon=\varepsilon(N) \rightarrow 0$ and $K=K(N) \rightarrow \infty$ as $N \rightarrow \infty,$ the estimate \eqref{equ:Theta_infty_epsilon_minus_Theta} and \eqref{aff eq: main variance} gives
 \[\left\|\Tilde{F}^{(\e,L)}_N\right\|_{L^2(\Y_{\xi})}^2 \rightarrow \Theta_\infty(0)+2\sum_{s=1}^{\infty}\Theta_\infty (s) \quad \text{as }N \rightarrow \infty.\]
Now the direct computation shows that 
\[
\Theta_{\infty}(s)=0 \quad \text{for all } s \in \mathbb N,
\]
and  $ $
   \[\Theta_{\infty}(0)= \sigma_{m,n}^2,\]
   where $ \sigma$ is as in Theorem~\ref{thm: main thm}. Hence, we get that
   \begin{align}
       \label{eq;var comp}
       \left\|\Tilde{F}^{(\e,L)}_N\right\|_{L^2(\Y_{\xi})}^2 \rightarrow \sigma_{m,n}^2.
   \end{align}

   \subsection{Proof of Proposition~\ref{thm:main_reduced}}
   \begin{proof}
       The proposition follows directly using the criteria given in Proposition~\ref{prop:Method of Cumulants} along with Lemma~\ref{lem:F_N_F_N_epsilon_L} and the calculations of  \S~\ref{subsec:Estimate cum} and \S~\ref{subsec:Estimate var}.
   \end{proof}
   
   %equals the measure of the set $\Omega_e$, defined as in \eqref{eq:def Omega}. Hence, $\Theta_{\infty}(0)= 2^{m+1} \vartheta_1\vartheta_2 \ldots \vartheta_m \omega_n \text{ with } \omega_n := \int_{S^{n-1}}  d\bar{z}$. 

 %Combining the above statements, we get that since $\e(M) \rightarrow 0$ as $M \rightarrow \infty$, so \begin{align*}\Theta^{(\e)}_\infty(0)+2\sum_{s=1}^{K(M)-1}\Theta^{(\e)}_\infty (s) \rightarrow 2^m \vartheta_1\vartheta_2 \ldots \vartheta_m \omega_n,\end{align*}which proves that \begin{equation}\label{aff variance cal 1}\Theta^{(\e)}_\infty(0)+2\sum_{s=1}^{K-1}\Theta^{(\e)}_\infty (s) \rightarrow 2^m \vartheta_1\vartheta_2 \ldots \vartheta_m \omega_n\end{equation} as $M \rightarrow \infty$.Now, resuming the steps in proof of Theorem 6.1 of \cite{BG} will give us the proof.

\appendix

\section{Auxiliary bounds and estimates}

In this section, we prove three lemmas used in the proof of Theorem~\ref{thm: main thm}. The arguments follow the same general strategy as in \cite{BG}, with minor modifications to accommodate our setting. For completeness, we include the proofs here.

\begin{lemma}\label{lem:final}
    \[\sum_{s=0}^{N-1}\int_{\Y_{\xi}} \hat{\ind}(b^sy)\,\, d\mu_{\Y_{\xi}}(y)=C_{m,n} N+O(1),\]
    where $C_{m,n}=2^m \vartheta_1\dots\vartheta_m \omega_m$ with $\omega_n=\int_{\mathbb S^{n-1}}\|\bar{z}\|^{-n}\,\,d\bar{z}$
\end{lemma}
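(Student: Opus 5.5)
The plan is to compute each summand exactly using Fubini over the torus $\Y_\xi$, rather than through equidistribution. For fixed $s$,
\[
\int_{\Y_\xi}\hat\ind(b^s y)\,d\mu_{\Y_\xi}(y)=\int_{\Matt}\#\bigl(\Lambda_{\theta,\xi}\cap b^{-s}\Omega_e\bigr)\,d\theta ,
\]
and $b^{-s}\Omega_e$ is exactly the piece of $\Omega_{e^{N}}$ with $e^{s}\le\|\mathbf y\|\le e^{s+1}$. Summing over $s$ therefore reduces the left-hand side to
\[
\int_{\Matt}\#\{(\mathbf p,\mathbf q)\in\Z^m\times\Z^n: 1\le\|\mathbf q\|\le e^N,\ |\theta_i\mathbf q+\xi_i+p_i|<\vartheta_i\|\mathbf q\|^{-w_i}\ \forall i\}\,d\theta .
\]

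Next I fix $\mathbf q\neq0$ and sum over $\mathbf p\in\Z^m$. The condition on each row $\theta_i$ is a condition on $\theta_i\cdot\mathbf q\bmod 1$. Since $\mathbf q\neq0$ is an integer vector, the map $\theta_i\mapsto\theta_i\cdot\mathbf q \bmod 1$ pushes Lebesgue measure on $[0,1]^n$ forward to Lebesgue measure on $\T$. This holds because some coordinate $q_j\neq0$, and $x\mapsto q_jx$ preserves Lebesgue measure on $\T$. So the shift $\xi_i$ drops out entirely, and, provided $2\vartheta_i\|\mathbf q\|^{-w_i}\le 1$,
\[
\int_{[0,1]^n}\sum_{p_i\in\Z}\ind\bigl(|\theta_i\cdot\mathbf q+\xi_i+p_i|<\vartheta_i\|\mathbf q\|^{-w_i}\bigr)\,d\theta_i = 2\vartheta_i\|\mathbf q\|^{-w_i}.
\]
The rows are independent, so the integrated count for a fixed $\mathbf q$ is $2^m\vartheta_1\cdots\vartheta_m\|\mathbf q\|^{-n}$, using $\sum w_i=n$. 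Only the finitely many $\mathbf q$ with $2\vartheta_i\|\mathbf q\|^{-w_i}>1$ escape this formula, and they contribute $O(1)$. Boundary cases such as $\|\mathbf q\|=1$ or equality in the inequalities are measure zero or finitely many terms, hence also $O(1)$.

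It remains to show
\[
\sum_{1\le\|\mathbf q\|\le e^N}\|\mathbf q\|^{-n}=\omega_n N+O(1)
\]
with $\omega_n=\int_{\mathbb S^{n-1}}\|\bar z\|^{-n}d\bar z$, where $\mathbb S^{n-1}$ is the Euclidean sphere and $\|\cdot\|$ the sup norm. This is the main technical step. I would compare the sum with $\int_{1\le\|x\|\le e^N}\|x\|^{-n}\,dx$. In polar coordinates $x=\rho\bar z$ this integral equals $\int_{\mathbb S^{n-1}}\int \|\bar z\|^{-n}\rho^{-1}\,d\rho\,d\bar z=\omega_n N$, because the radial range, for each direction $\bar z$, has logarithmic length exactly $N$.

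For the lattice-point error, I would sum over dyadic shells $e^k\le\|\mathbf q\|<e^{k+1}$. The number of integer points in $\{\|\mathbf q\|\le R\}$ is $(2R+1)^n$ up to the boundary, namely $(2R)^n+O(R^{n-1})$. By partial summation against $R^{-n}$, the error is $\sum_R O(R^{n-1})\cdot R^{-n-1}$, which converges. Hence the discrepancy is $O(1)$.

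Combining the three steps gives $C_{m,n}N+O(1)$ with $C_{m,n}=2^m\vartheta_1\cdots\vartheta_m\omega_n$. The expected obstacles are only bookkeeping: making the partial summation rigorous for the sup-norm count, and handling the finitely many small $\mathbf q$.
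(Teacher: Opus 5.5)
Your proposal is correct and follows essentially the same route as the paper: you unfold via Fubini into a sum over $\mathbf q$, use that $\theta_i\mapsto\langle\theta_i,\mathbf q\rangle \bmod 1$ preserves Lebesgue measure to get $2^m\vartheta_1\cdots\vartheta_m\|\mathbf q\|^{-n}$ for each $\mathbf q$, and then compare $\sum_{1\le\|\mathbf q\|\le e^N}\|\mathbf q\|^{-n}$ with $\int_{1\le\|x\|\le e^N}\|x\|^{-n}\,dx=\omega_n N$ in polar coordinates. Two minor remarks. First, your proviso $2\vartheta_i\|\mathbf q\|^{-w_i}\le 1$ is unnecessary: the sum over all $p_i\in\Z$ is a periodization, so it integrates to exactly $2\vartheta_i\|\mathbf q\|^{-w_i}$ for every $\mathbf q\neq 0$. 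Second, your partial-summation treatment of the lattice-point error is more careful than the paper's one-line justification, which really needs a per-point error of $O(\|\mathbf q\|^{-n-1})$ rather than the stated $O(1)$ in order to sum to $O(1)$.
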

\begin{proof}
    First note that
    \[\sum_{s=0}^{N-1}\int_{\Y_{\xi}} \hat{\ind}(b^sy)\,\, d\mu_{\Y_{\xi}}(y)=\int_{\Y_{\xi}}\hat{\phi}_N(y) \,\, d\mu_{\Y_{\xi}}(y),\]
 where $\phi_N$ denotes the characteristic function of the set
 \[\left\{ (\bar{x},\bar{y})\in \mathbb R^{m+n}:1\leq \|\bar{y}\|<e^N,\,\, |x_i|<\vartheta_i \|\bar{y}\|^{-w_i},\,\,i=1,\dots,m\right\}.\]
Now
 \begin{eqnarray*}
     \int_{\Y_{\xi}}\hat{\phi}_N(y) \,\, d\mu_{\Y_{\xi}}(y) &=& \sum_{1 \leq \|\bar{q}\|<e^N} \prod_{i=1}^m \sum_{p_i \in \Z} \int_{[0,1]^n} \ind'_{\vartheta \|\bar{q}\|^{-w_i}}(p_i+\langle \bar{\theta}_i,\bar{q} \rangle +\xi_i)\,\, d\bar{\theta}_i,
 \end{eqnarray*}
 where $\ind'_{M}$ denotes the characteristic function of $[-M,M]$.
 
We claim that for fixed $\xi_0 \in \mathbb R$
\begin{equation}\label{equ:sum_p_int_chi}
    \sum_{p \in \Z} \int_{[0,1]^n} \ind'_{\vartheta \|\bar{q}\|^{-w_i}}(p+\langle \bar{\theta},\bar{q} \rangle +\xi_0)\,\, d\bar{\theta}= 2 \vartheta_i \|\bar{q}\|^{-w_i}.
\end{equation}
For proving this, we consider a general compactly supported bounded measurable function $\varphi$ on $\mathbb R$, the function $\phi(x)=\varphi(x_1+\xi_0)$ on $\mathbb R^m,$ and the function $\tilde{\phi}(x)=\sum_{p\in \mathbb Z}\varphi(p+x_1+\xi_0)$ on the torus $\mathbb  R^m/\mathbb Z^m.$ Without loss of generality we can assume $q_1 \neq 0$ and hence the following linear map $S: \mathbb R^m \rightarrow \mathbb R^m$ given by 
\[S(\bar{\theta})=(\langle \bar{\theta},\bar{q}\ \rangle,\theta_2,\dots,\theta_m)\,\, \text{ for } \bar{\theta} \in \R^m \]
is non-degenerate and induces a linear epimorphism of the torus $\mathbb R^m/\mathbb Z^m.$ Now, since $S$ preserves the Lebesgue probability measure $\lambda$ on $\R^m/\Z^m$ and the Lebesgue measure on $\R/\Z$ is translation invariant, we have 
\begin{eqnarray*}
    \sum_{p \in \Z} \int_{[0,1]^m} \varphi (p+\langle \bar{\theta},\bar{q}\rangle +\xi_0)= \int_{\R^m/\Z^m} \tilde{\phi}(Sx) d\lambda(x)  =    \int_{\R^m/\Z^m} \tilde{\phi}(x) d\lambda(x)  = \int_{\R} \varphi(x_1)\,\, dx_1.
\end{eqnarray*}
By taking, $\phi=\ind'_{\vartheta \|\bar{q}\|^{-w_i}},$ we have our desired claim \eqref{equ:sum_p_int_chi}. Now using \eqref{equ:sum_p_int_chi}, we get 
\[\int_{\Y_{\xi}}\hat{\phi}_N(y) \,\, d\mu_{\Y}(y) = 2^m \left(\prod_{i=1}^m \vartheta \right)\sum_{1 \leq \|\bar{q}\|<e^N} \|\bar{q}\|^{-n}.  \]
Since $\|\bar{y}_2\|^{-n}=\|\bar{y}_1\|^{-n}+O(1)$ whenever $\|\bar{y}_1-\bar{y}_2\| \ll 1,$ we get that
\[\sum_{1 \leq \|\bar{q}\|<e^N} \|\bar{q}\|^{-n}=\int_{1 \leq \|\bar{y}\|<e^N} \|\bar{y}\|^{-n} \, d\bar{y} +O(1),\]
and further expressing the integral in polar coordinates, we have
\[\int_{1 \leq \|\bar{y}\|<e^N} \|\bar{y}\|^{-n} \, d\bar{y}=\int_{\mathbb S^{n-1}}\int_{\|\bar{z}\|^{-1}}^{e^N\|\bar{z}\|^{-1}} \|r\bar{z}\|^{-n}r^{n-1}\,\,dr\,d\bar{z}=\omega_n N +O(1).\]
\end{proof}

\begin{lemma}\label{prop:hatf_L_1_2}
   Let $m \geq 2.$ Suppose $f:\mathbb R^{m+n} \rightarrow \mathbb R$ is a bounded measurable function with compact support contained in the open set $\{(x_1,\dots,x_{m+n}) \in \mathbb R^{m+n}: (x_{m+1},\dots,x_{m+n}) \neq 0\}.$ Then for $i=1,2$, we have
    \[\sup_{s\geq 0}\|\hat{f} \circ b^s\|_{L^{i}(\Y_{\xi})} < \infty.\]
\end{lemma}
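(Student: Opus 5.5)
We need uniform bounds for $\|\hat f\circ b^s\|_{L^i(\Y_\xi)}$, $i=1,2$. Since $L^1\le L^2$ on a probability space, it suffices to treat $i=2$, i.e.\ to bound $\int_{\Y_\xi}\hat f(b^s y)^2\,d\mu_{\Y_\xi}(y)$ uniformly in $s$. We may also replace $f$ by $\|f\|_{C^0}$ times the indicator of a box containing $\supp f$, of the form
\[
\{|x_i|\le c,\ i\le m;\ a\le \|\mathbf y\|\le A\}, \qquad a>0.
\]
This reduces everything to counting lattice points directly.

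We write
\[
\hat f(b^s\Lambda_{\theta,\xi})=\sum_{(\mathbf p,\mathbf q)} f\bigl(b^s(\mathbf p+\theta\mathbf q+\xi,\mathbf q)\bigr).
\]
Here $b^s$ expands the first $m$ coordinates by $e^{w_is}$ and contracts $\mathbf q$ by $e^{-s}$. The support condition on $f$ therefore forces $\|\mathbf q\|\asymp e^{s}$ and $|p_i+\langle\theta_i,\mathbf q\rangle+\xi_i|\le c e^{-w_is}$. Expanding the square gives the diagonal terms plus the off-diagonal pairs $(\mathbf p,\mathbf q)\ne(\mathbf p',\mathbf q')$:
\[
\int\hat f(b^s\cdot)^2\,d\mu_{\Y_\xi}\ \ll\ \sum_{\mathbf q,\mathbf q'}\ \prod_{i=1}^m \sum_{p_i,p_i'}\int_{[0,1]^n}\mathbf 1\bigl[|p_i+\langle\theta_i,\mathbf q\rangle+\xi_i|\le ce^{-w_is},\ |p_i'+\langle\theta_i,\mathbf q'\rangle+\xi_i|\le ce^{-w_is}\bigr]\,d\theta_i .
\]
This uses that the rows $\theta_i$ are independent.

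For each fixed $i$ and $\mathbf q,\mathbf q'$ we apply the torus argument of Lemma~\ref{lem:final}, now for the map $\theta_i\mapsto(\langle\theta_i,\mathbf q\rangle,\langle\theta_i,\mathbf q'\rangle)$ on $\mathbb T^n$. There are two cases.

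\emph{Linearly independent $\mathbf q,\mathbf q'$.} The map is onto a $2$-torus and pushes Haar measure to Haar measure on $\mathbb T^2$. The inner quantity is then $\ll e^{-2w_is}$, independently of $\xi$, so the shift causes no difficulty here. Taking the product over $i$ gives $\ll e^{-2ns}$, and the number of such pairs is $\ll e^{2ns}$, so these terms contribute $O(1)$.

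\emph{Diagonal terms.} These give exactly $\int f^2$ (up to the box approximation) by Lemma~\ref{lem:final}'s computation, which is $O(1)$.

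\emph{Dependent pairs $\mathbf q'=\tfrac{k}{l}\mathbf q$.} This is the main obstacle. Write $\mathbf q=l\mathbf r$ and $\mathbf q'=k\mathbf r$ with $\mathbf r$ primitive, and set $u_i=\langle\theta_i,\mathbf r\rangle$, which is uniform on $\mathbb T$. The condition becomes that both $lu_i+\xi_i$ and $ku_i+\xi_i$ lie within $ce^{-w_is}$ of $\mathbb Z$. Eliminating $u_i$, this forces $(k-l)\xi_i$ to lie within $\ll (|k|+|l|)e^{-w_is}$ of $\mathbb Z$ when $\mathbf q\neq\mathbf q'$ in the same direction. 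Heuristically the measure of the intersection is $\ll e^{-w_is}\cdot\min(1,\ldots)$, and the count should be $\ll e^{-w_is}$ times $\#\{$common solutions$\}$. The $p_i$-sum contributes about $\max(|k|,|l|)$ copies.

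Bounding the measure crudely by the single-constraint bound $e^{-w_is}\max(|k|,|l|)/|k|$ gives a per-pair bound $\prod_i e^{-w_is}=e^{-ns}$. Summing over primitive $\mathbf r$ and integers $k,l$ with $\|l\mathbf r\|,\|k\mathbf r\|\asymp e^s$ then gives
\[
\sum_{\mathbf r}\bigl(e^s/\|\mathbf r\|\bigr)^2 e^{-ns}\ \ll\ e^{(2-n)s}\sum_{\|\mathbf r\|\le e^s}\|\mathbf r\|^{-2}.
\]
This is bounded for $n\ge 3$ but only logarithmically bounded for $n=2$, and it grows for $n=1$. So the crude bound must be improved by exploiting that, for $k\neq l$, the two constraints on $u_i$ are genuinely independent unless $(k-l)\xi_i$ is nearly an integer, and this must hold simultaneously in all $m\ge 2$ coordinates. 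When there is no such coincidence the overlap measure drops to $\ll e^{-2w_is}\cdot\max(|k|,|l|)$, which yields the product $e^{-2ns}$ as in the independent case. When there is a coincidence, i.e.\ $\|(k-l)\xi\|\ll e^s e^{-\min w_i s}$, the non-Liouville hypothesis on $\xi\in\R^m$ (or the requirement that this hold in $m\ge 2$ coordinates simultaneously, as in \cite{BG}'s $\xi=0$ treatment where $m\ge2$ is used) makes such $k-l$ rare, and summing their count gives a bounded total.

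I expect this dependent-pair analysis, and in particular the case $n=1$, to be the real work. I would first try to mirror the corresponding estimate of \cite[Prop.~4.5 and \S4]{BG}, inserting the shift $\xi$ and using $m\ge2$.
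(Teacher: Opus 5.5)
\textbf{There is a genuine gap: the dependent-pair case is not proved for $n\in\{1,2\}$.} Your setup is essentially the paper's: expanding the square, factoring over the rows $\theta_i$, and reducing to a one-variable torus integral. Your treatment of independent pairs and of the diagonal is fine. Your crude estimate also settles dependent pairs when $n\ge 3$. But for $n\in\{1,2\}$ the dependent-pair estimate is the whole content of the lemma, and there you give only a heuristic that does not work as written.

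\emph{What fails in the proposed refinement.}
Your ``no coincidence'' bound $e^{-2w_is}\max(|k|,|l|)$ multiplies over $i$ to $e^{-2ns}\max(|k|,|l|)^m$, not to $e^{-2ns}$. In fact, if no compatible $(p,p')$ exists, the overlap is simply $0$.

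Your coincidence criterion also tracks the wrong quantity. Eliminating $u$ from $lu+\xi_i=p+x$ and $ku+\xi_i=p'+y$, with $|x|,|y|\le\epsilon_i\asymp e^{-w_is}$, gives
$kp-lp'\in(k-l)\xi_i+[-(|k|+|l|)\epsilon_i,(|k|+|l|)\epsilon_i]$. Here $kp-lp'$ runs over $\gcd(k,l)\Z$. So what matters is how many admissible values there are and how many $(p,p')$ each carries, not the distance of $(k-l)\xi_i$ to $\Z$.

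In the case $n=1$ that you single out, $|k|,|l|\asymp e^s$ and every $w_i<1$ because $m\ge 2$. So your threshold $e^{(1-\min_i w_i)s}$ exceeds $1$, and the coincidence condition is vacuous. For $\xi=0$, the case treated in \cite{BG}, every dependent pair is a coincidence in your sense, yet the sum is bounded. So rarity of coincidences cannot be the mechanism. Moreover, non-Liouville only gives $\|q\xi\|\ge q^{-E}$ for some possibly huge $E$, which yields no usable count of near-integer multiples at these scales.

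\emph{The missing idea.} The dependent-pair overlap can be bounded uniformly in $\xi_i$ by counting with gcds; the paper's proof never uses any Diophantine property of $\xi$. Put $g=\gcd(k,l)$. Then:
\begin{itemize}
\item there are at most $2(|k|+|l|)\epsilon_i/g+1$ admissible values $j\in g\Z$ of $kp-lp'$;
\item each such $j$ has $O(g)$ solutions $(p,p')$ in the relevant range;
\item each solution contributes a $u$-interval of length at most $2\epsilon_i/\max(|k|,|l|)$.
\end{itemize}
Hence
\[
\mu_{\mathbb T}\bigl(\{u:\|lu+\xi_i\|_{\Z}\le\epsilon_i,\ \|ku+\xi_i\|_{\Z}\le\epsilon_i\}\bigr)\ll\epsilon_i^2+\frac{\gcd(k,l)\,\epsilon_i}{\max(|k|,|l|)}.
\]
This is exactly the paper's bound
$J_i\ll\|\bar q\|^{-w_i}\|\bar\ell\|^{-w_i}+N(q_1,\ell_1)\max(\|\bar q\|,\|\bar\ell\|)^{-(1+w_i)}$, with $N(q_1,\ell_1)\ll\gcd(q_1,\ell_1)$. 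The paper obtains it by splitting into $J_i^{(1)}$ (where $q_1r-\ell_1p\ne0$) and $J_i^{(2)}$ (where $q_1r=\ell_1p$).

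The product over $i$ is then summed using \cite[Lemma~4.9]{BG}. The decisive input is $\sum_{|k|,|l|\le X}\gcd(k,l)^m\ll X^{m+1}$ for $m\ge2$; for $m=1$ one only gets $X^2\log X$. This makes the pure gcd term $\ll\sum_{\mathbf r}(e^s/\|\mathbf r\|)e^{-ns}\ll1$ even for $n=1,2$, and the mixed terms are bounded as well. This, not the Diophantine condition on $\xi$, is where $m\ge 2$ is used.
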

\begin{proof}
First, observe that using Cauchy--Schwarz inequality, it suffices to prove the $L^2$ boundedness. To this end, we closely follow the strategy of \cite[Proposition 4.8]{BG}.
From the assumption on the function $f$, there exist constants $0<v_1 <v_2$ and $\vartheta >0$ such that  the support of $f$ is contained in the set 
\begin{equation}
  \Omega_{v_1,v_2,\vartheta}:= \{(\bar{x},\bar{y}) \in \mathbb R^{m+n}:v_1 \leq\|\bar{y}\|\leq v_2,\,\, |x_i|\leq \vartheta  \|\bar{y}\|^{-w_i},\,\, i=1,\dots,m\},
\end{equation}
and without loss of generality, we may assume that $f$ is the characteristic function of $\Omega_{v_1,v_2,\vartheta}.$ Recall that $\xi \in \R^m$ is fixed, and
 \[ \mathcal{Y}_{\xi}=\left\{\Lambda_{\theta,\xi}:\theta=(\theta_{ij})_{m \times n} \in  \Mat \right\}=\left\{\Lambda_{\theta,\xi}:\theta  \in  \mathrm{M}_{m \times n}([0,1]) \right\},\]
where  
\[\Lambda_{\theta,\xi}=\left\{\left(p_1+\sum_{j=1}^{n}\theta_{1j}q_j+\xi_1,\dots,p_m+\sum_{j=1}^{n}\theta_{mj}q_j+\xi_m,q_1,\dots,q_n\right): p_1,\dots,p_m,q_1,\dots,q_n \in \mathbb Z\right\}.\]
We set $\bar{\theta_i}:=(\theta_{i1},\dots,\theta_{in})$ and denote the characteristic function of $[-M,M]$ by $\ind'_{M}.$ Then 
\begin{eqnarray}
    \hat{f}(b^s \Lambda_{\theta,\xi}) &=&\sum_{(\bar{p},\bar{q}) \in \mathbb Z^{m+n} \setminus \{0\}} f(e^{w_1 s}(p_1+\langle \bar{\theta_1},\bar{q}\rangle+\xi_1),\dots,e^{w_m s}(p_m+\langle \bar{\theta_m},\bar{q}\rangle+\xi_m),e^{-s}\bar{q}) \nonumber\\ &=& \sum_{v_1 e^s \leq \|\bar{q}\|\leq v_2 e^s} \sum_{\bar{p}\in \mathbb Z^{m}} \prod_{i=1}^{m}\ind'_{\vartheta \|\bar{q}\|^{-w_i}}(p_i+\langle \bar{\theta_i},\bar{q}\rangle+\xi_i) \nonumber \\ & =& \sum_{v_1 e^s \leq \|\bar{q}\|\leq v_2 e^s} \prod_{i=1}^{m} \sum_{p_i \in \mathbb Z}  \ind'_{\vartheta \|\bar{q}\|^{-w_i}}(p_i+\langle \bar{\theta_i},\bar{q}\rangle+\xi_i), \nonumber
\end{eqnarray}
which implies that
\begin{eqnarray}
\|f \circ b^s\|_{L^2(\mathcal{Y}_{\xi})}^2  & =&  \sum_{v_1 e^s \leq \|\bar{q}\|,\|\ell\| \leq v_2 e^s} \prod_{i=1}^{m}\left( \sum_{p_i,r_i \in \mathbb Z}   \int_{[0,1]^n} \,\ind'_{\vartheta \|\bar{q}\|^{-w_i}}(p_i+\langle \bar{\theta_i},\bar{q}\rangle+\xi_i) \ind'_{\vartheta \|\bar{\ell}\|^{-w_i}}(r_i+\langle \bar{\theta_i},\bar{\ell}\rangle+\xi_i) d\bar{\theta_i} \right).\nonumber
\end{eqnarray}
Hence for fixed $\bar{q},\bar{\ell} \in \mathbb Z^n,$ and $v_0 \in [0,1],$ it is enough to estimate the following integral:
\[I_i(\bar{q},\bar{\ell})=\sum_{p,r \in \mathbb Z}   \int_{[0,1]^n} \ind'_{\vartheta \|\bar{q}\|^{-w_i}}(p+\langle \bar{u},\bar{q}\rangle+v_0) \, \ind'_{\vartheta \|\bar{\ell}\|^{-w_i}}(r+\langle \bar{u},\bar{\ell}\rangle+v_0) \,d\bar{u}. \]
Estimating $I_i(\bar{q},\bar{\ell})$ is divided into two cases based on whether $\bar{q}$ and $\bar{\ell}$ are linearly independent. First, we consider the case when $\overline{q}$ and $\overline{\ell}$ are linearly independent. Then there exist indices $j,k=1,\ldots,n$ such that $q_j\ell_k-q_k\ell_j\ne 0.$ Let us consider the function $\phi$ on $\mathbb R^2$ defined by
$\phi(x_1,x_2)=\chi^{(i)}_{\overline{q}}(x_1)\chi^{(i)}_{\overline{\ell}}(x_2)$
and the periodized function $\bar\phi$ on $\mathbb R^2/\mathbb Z^2$ defined by 
$\bar\phi(x)=\sum_{z\in \mathbb Z^2} \phi(z+x)$.
 Set,
\[
\omega:=\sum_{\zeta\ne j,k} q_\zeta u_\zeta +v_0 \quad \text{and} \quad \rho:=\sum_{\zeta\ne j,k} \ell_\zeta u_\zeta +v_0.
\] 
Let $S$ the affine map given by
$$
S:(x_1,x_2)\mapsto (q_jx_1 +q_kx_2+\omega, \ell_j x_1+\ell_k x_2+\rho),
$$
which induces an affine endomorphism of the torus $\mathbb R^2/\mathbb Z^2$. 

Suppose $\mu$ be the Lebesgue probability measure on the torus. Then
$$
\sum_{p,r\in\mathbb Z}\int_{[0,1]^2}\chi^{(i)}_{\overline{q}}\left(p+\left<\overline{u},\overline{q}\right>+v_0\right)\chi^{(i)}_{\overline{\ell}}\left(r+\left<\overline{u},\overline{\ell}\right>+v_0\right)\, du_jdu_k=\int_{\mathbb R^2/\mathbb Z^2} \bar{\phi}(Sx)\, d\mu(x).
$$
Since  $S$ preserves $\mu$, we have
$$
\int_{\mathbb R^2/\mathbb Z^2} \bar{\phi}(Sx)\, d\mu(x)=\int_{\mathbb R^2/\mathbb Z^2} \bar{\phi}(x)\, d\mu(x)
=\int_{\mathbb R^2} \phi(x)\, dx=4\vartheta^2\, \|\overline{q}\|^{-w_i} \|\overline{\ell}\|^{-w_i}.
$$
Therefore, in this case, we get that
\begin{equation}
\label{eq:II1}
I_i(\overline{q},\overline{\ell})\ll \|\overline{q}\|^{-w_i} \|\overline{\ell}\|^{-w_i}.
\end{equation}

Let us now consider the second case when $\overline{q}$ and $\overline{\ell}$ are linearly dependent. After possibly reordering the indices, we may assume that
\begin{equation}\label{eq:q}
|q_1|=\max(|q_1|,\ldots,|q_n|,|\ell_1|,\ldots, |\ell_n|).
\end{equation}
This implies that $q_1 \neq 0$. Since $\bar{q}$ and $\bar{\ell}$ are linearly dependent, it follows that $\ell_1 \neq 0$ as well. Hence, we may introduce new variables as follows:
 $$
v_1=\sum_{j=1}^n (q_j/q_1)u_j = \sum_{j=1}^n (\ell_j/\ell_1)u_j \quad v_2=u_2,\ldots, v_n=u_n,
$$
and so
\begin{equation}\label{eq:I_i_less_J_i}
    I_i(\overline{q},\overline{\ell})\le J_i(\overline{q},\overline{\ell}),
\end{equation}
where
$$
J_i(\overline{q},\overline{\ell}):=
\sum_{p,r\in\mathbb{Z}}
\int_{-n}^n\chi^{(i)}_{\overline{q}}\left(p+q_1 v_1+v_0\right)\,\chi^{(i)}_{\overline{\ell}}\left(r+\ell_1v_1+v_0\right)\, dv_1.
$$
Note that the last integral is non-zero only when $|p|\ll |q_1|$ and $|r|\ll |\ell_1|$, where the implied constants depend on $n$ and $v_0$.
Write $q_1=q'd$ and $\ell_1=\ell'd$ where $d=\gcd(q_1,\ell_1)$.
Then $q_1r-\ell_1 p=j d$ for some $j\in \mathbb{Z}$. Observe that when $j$ is fixed, then the integers $p$ and $r$ satisfy the 
equation $q'r-\ell'p=j$. Since $\gcd(q',\ell')=1$,
all the solutions of the above equation are given by $p=p_0+kq'$, $r=r_0+k\ell'$
for $k\in \mathbb{Z}$. It follows that the number of such solutions
satisfying $|p|\ll |q_1|$ and $|r|\ll |\ell_1|$ is at most $O(d)$.
 Accordingly, we decompose
$$
J_i(\overline{q},\overline{\ell})=J^{(1)}_i(\overline{q},\overline{\ell})+J^{(2)}_i(\overline{q},\overline{\ell}),
$$
where $J_i^{(1)}$ is the sum   over those $p,r$ with $q_1r-\ell_1 p\ne 0$,
and $J_i^{(2)}$ is the sum over those $p,r$ with $q_1r-\ell_1 p=0$.\\

Applying the linear change of variables $v_1'=v_1+\frac{p+v_0}{q_1}$ (and relabeling $v_1'$ again by $v_1$), we obtain  
\begin{align*}
J^{(1)}_i(\overline{q},\overline{\ell})
&=\sum_{p,r:\, q_1r-\ell_1 p\ne 0}
\int_{-n+(p+v_0)/q_1}^{n+(p+v_0)/q_1}\chi^{(i)}_{\overline{q}}(q_1v_1)\,\chi^{(i)}_{\overline{\ell}}\left((q_1r-\ell_1 p)/q_1+\ell_1v_1+v_0(1-\frac{\ell_1}{q_1})\right)\, dv_1\\
&\ll d \sum_{j\in \mathbb{Z}\backslash\{0\}} 
\int_{-\infty}^{\infty}\chi^{(i)}_{\overline{q}}(q_1 v_1)\,\chi^{(i)}_{\overline{\ell}}\left(jd/q_1+\ell_1 v_1+v_0(1-\ell_1/q_1)\right)\, dv_1.
\end{align*}
Define
$$
\rho_i(x):=  \int_{-\infty}^{\infty} \chi^{(i)}_{\overline{q}}(q_1 v_1)\,\chi^{(i)}_{\overline{\ell}}\left(xd/q_1+\ell_1 v_1+v_0(1-\ell_1/q_1)\right)\, dv_1.
$$
The integrand is the indicator function of the intersection of the intervals
$$
\left[-\vartheta\,|q_1|^{-1}\|\overline{q}\|^{-w_i},\vartheta\, |q_1|^{-1}\|\overline{q}\|^{-w_i}\right]
$$
and 
$$
\left[-xd/(q_1\ell_1)-\vartheta\,|\ell_1|^{-1}\|\overline{\ell}\|^{-w_i}-\frac{v_0}{\ell_1}(1-\ell_1/q_1),-xd/(q_1\ell_1)+\vartheta\,|\ell_1|^{-1}\|\overline{\ell}\|^{-w_i}-\frac{v_0}{\ell_1}(1-\ell_1/q_1)\right].
$$
%Suppose, for instance, that $q_1\ell_1>0$. 
It follows that $\rho_i$ is non-increasing when $x\ge 0$, and non-decreasing when $x\le 0$.
 Consequently,
$$
\sum_{j\in \mathbb{Z}\backslash \{0\}} \rho_i(j)
\le \int_{-\infty}^{\infty} \rho_i(x)\, dx.
$$
Moreover,
 \begin{eqnarray}
    \int_{-\infty}^{\infty} \rho_i(x)\, dx &=&
\int_{-\infty}^{\infty} \chi^{(i)}_{\overline{q}}(q_1 v_1) \left(\int_{-\infty}^{\infty}\chi^{(i)}_{\overline{\ell}}\left(xd/q_1+\ell_1 v_1+v_0(1-\ell_1/q_1)\right)\, dx\right)\,dv_1 \nonumber \\ &=& \int_{-\infty}^{\infty} \chi^{(i)}_{\overline{q}}(q_1 v_1) \left(\frac{2q_1\vartheta \|\ell\|^{-w_i}}{d  }\right) \, dv_1
\ll d^{-1} \|\overline{q}\|^{-w_i} \|\overline{\ell}\|^{-w_i}.
 \end{eqnarray}
Therefore,
\begin{align*}
J_i^{(1)}(\overline{q},\overline{\ell})\ll d \sum_{j\in \mathbb{Z}\backslash \{0\}} \rho_i(j)
\ll \|\overline{q}\|^{-w_i}\|\overline{\ell}\|^{-w_i}.
\end{align*}
%This argument also applies when $q_1\ell_1<0$ and implies the same bound.
We now estimate $J^{(2)}_i(\overline{q},\overline{\ell})$.
Let $c_0:=\min\{\|\overline{q}\|: \overline{q} \in \mathbb Z^{n}\backslash \{0\}\}$.
Denote by $N(q_1,\ell_1)$ the number of solutions $(p,r)$ of the equation
$$
q_1r-\ell_1 p= 0\quad\hbox{ with $|p|\le (c_0^{-n}\vartheta+n+|v_0|)|q_1|$,} % and $|r|\le (\vartheta+n)|\ell_1|$}.
$$
By change of variable $v_1'=v_1+p/q_1=v_1+r/\ell_1$, we obtain
\begin{align*}
	J^{(2)}_i(\overline{q},\overline{\ell})
	&=\sum_{p,r:\, q_1r-\ell_1 p= 0}
	\int_{-n+p/q_1}^{n+p/q_1}\chi^{(i)}_{\overline{q}}(q_1v_1+v_0)\,\chi^{(i)}_{\overline{\ell}}\left(\ell_1v_1+v_0\right)\, dv_1\\
	&\le N(q_1,\ell_1) \int_{-\infty}^{\infty}\chi^{(i)}_{\overline{q}}(q_1v_1+v_0)\,\chi^{(i)}_{\overline{\ell}}\left(\ell_1v_1+v_0\right)\, dv_1\\
	&\ll N(q_1,\ell_1)|q_1|^{-1}\|\overline{q}\|^{-w_i}\ll N(q_1,\ell_1)\max\left(\|\overline{q}\|,\|\overline{\ell}\|\right)^{-(1+w_i)},
\end{align*}
where we used that $q_1$ is chosen according to \eqref{eq:q}.
Combining the bounds for $J_i^{(1)}(\overline{q},\overline{\ell})$
and $J^{(2)}_i(\overline{q},\overline{\ell})$, we conclude that when 
$\overline{q}$ and $\overline{\ell}$ are 
linearly dependent,
\begin{equation}
\label{eq:II2}
J_i(\overline{q},\overline{\ell})\ll \|\overline{q}\|^{-w_i}\|\overline{\ell}\|^{-w_i}+
N(q_1,\ell_1)\max\left(\|\overline{q}\|,\|\overline{\ell}\|\right)^{-(1+w_i)},
\end{equation}
where $q_1$ is chosen according to \eqref{eq:q}.

 Now combining \eqref{eq:II1}, \eqref{eq:I_i_less_J_i} and \eqref{eq:II2} with \cite[Lemma 4.9]{BG} gives our desired $L^2$ bound.
\end{proof}

\begin{lemma}\label{prop:f_epsilon_chi_hat}
    Given any $s\geq 0,$
    \[\int_{\Y_{\xi}} \left|\hat{f_{\varepsilon}} \circ b^s -\hat{\ind} \circ b^s\right| d\mu_{\Y_{\xi}} \ll \varepsilon + e^{-s}.\]
\end{lemma}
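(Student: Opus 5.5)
Since $\ind \le f_\varepsilon$ pointwise, we have $\hat{f_\varepsilon}-\hat\ind=\widehat{(f_\varepsilon-\ind)}\ge 0$. Hence the absolute value can be dropped and the integral equals $\int_{\Y_\xi}\widehat{g_\varepsilon}(b^s y)\,d\mu_{\Y_\xi}(y)$ with $g_\varepsilon:=f_\varepsilon-\ind\ge 0$. The function $g_\varepsilon$ is supported in the $\varepsilon$-neighbourhood of $\Omega_e$, but outside $\Omega_e$ itself. The plan is to compute this mean exactly, as in the proof of Lemma~\ref{lem:final}, by unfolding the Siegel transform over $\Y_\xi$.

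\textbf{Step 1: unfolding.} Write $\widehat{g_\varepsilon}(b^s\Lambda_{\theta,\xi})=\sum_{(\bar p,\bar q)\in\Z^{m+n}} g_\varepsilon\bigl(e^{w_i s}(p_i+\langle\bar\theta_i,\bar q\rangle+\xi_i)_i,\,e^{-s}\bar q\bigr)$ and separate the terms with $\bar q\neq 0$ from those with $\bar q=0$.

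For $\bar q\neq 0$, bound $g_\varepsilon$ by the indicator of a box $\{\|\bar x\|\le c,\ \bar y\in Y_\varepsilon\}$, where $Y_\varepsilon$ is the $\varepsilon$-shell $\{1-\varepsilon\le\|\bar y\|\le 1+\varepsilon\}\cup\{e-\varepsilon\le\|\bar y\|\le e+\varepsilon\}$. There is one further piece of the $\varepsilon$-neighbourhood, coming from the $x_i$ boundaries; it is handled separately below. For the main part, apply identity \eqref{equ:sum_p_int_chi} to each coordinate $i$: summing over $p_i$ and integrating over $\bar\theta_i$ turns each factor into the one-dimensional Lebesgue integral in $x_i$, for any fixed shift $\xi_i$. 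This gives
\[
\sum_{\bar q\ne 0}\int_{\R^m} g_\varepsilon(e^{w s}\bar x, e^{-s}\bar q)\,d\bar x\;=\;e^{-ns}\sum_{\bar q\neq0}G_\varepsilon(e^{-s}\bar q),\qquad G_\varepsilon(\bar y):=\int_{\R^m}g_\varepsilon(\bar x,\bar y)\,d\bar x .
\]
Here we use $\sum_i w_i=n$.

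\textbf{Step 2: comparing the lattice sum with an integral.} The right-hand side above is a Riemann sum, at mesh $e^{-s}$, of the compactly supported bounded function $G_\varepsilon$. We therefore expect
\[
e^{-ns}\sum_{\bar q} G_\varepsilon(e^{-s}\bar q)=\int_{\R^n}G_\varepsilon + O(e^{-s}) = \|g_\varepsilon\|_{L^1}+O(e^{-s})\ll \varepsilon+e^{-s},
\]
using $\|f_\varepsilon-\ind\|_{L^1}\ll\varepsilon$ from \eqref{equ:chi_appro_f_epsilon}.

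This step is the main obstacle. The Riemann-sum error for a function whose support sits near the spheres $\|\bar y\|=1$ and $\|\bar y\|=e$ is not automatically $O(e^{-s})$ uniformly in $\varepsilon$: when $\varepsilon<e^{-s}$ the shell is thinner than the mesh. I would avoid comparing $G_\varepsilon$ itself with its integral. Instead, dominate $G_\varepsilon$ by $C\cdot\ind_{Y_\varepsilon}(\bar y)$; this is valid because $G_\varepsilon$ is bounded and the relevant $x$-width is bounded. Then count lattice points directly:
\[
\#\{\bar q:\ e^{-s}\bar q\in Y_\varepsilon\}\ll e^{ns}\varepsilon+e^{(n-1)s}.
\]
This holds since a sup-norm shell of radius $R\asymp e^s$ and thickness $\varepsilon e^s$ contains $\ll R^{n-1}(\varepsilon e^s+1)$ integer points. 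Multiplying by $e^{-ns}$ gives $\ll\varepsilon+e^{-s}$.

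The remaining part of the $\varepsilon$-neighbourhood, where $\bar y$ lies inside the annulus but some $x_i$ lies within $\varepsilon$ of $\pm\vartheta_i\|\bar y\|^{-w_i}$, has $G_\varepsilon\ll\varepsilon$ uniformly. By \eqref{equ:sum_p_int_chi} its contribution is therefore $\ll\varepsilon\, e^{-ns}\#\{\bar q: 1-\varepsilon\le e^{-s}\|\bar q\|\le e+\varepsilon\}\ll\varepsilon$.

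\textbf{Step 3: the term $\bar q=0$.} This term is $\sum_{\bar p}g_\varepsilon(\ldots,0)$, which vanishes for $\varepsilon<1$, because the support of $f_\varepsilon$ avoids $\{\bar y=0\}$. Combining Steps 1--3 gives the bound $\ll\varepsilon+e^{-s}$, with the implied constant uniform in $s$ and $\xi$.
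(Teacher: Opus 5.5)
Your proposal is correct and follows essentially the same route as the paper. Both arguments drop the absolute value using $\ind\le f_\varepsilon$, split the excess into the two thin $\bar y$-shells near $\|\bar y\|=1,e$ and a thin $x$-boundary layer, unfold each piece over $\Y_\xi$ via \eqref{equ:sum_p_int_chi}, and count integer points in a shell of radius $\asymp e^s$ and thickness $\asymp\varepsilon e^s$ to get $\varepsilon+e^{-s}$, with the boundary layer contributing $O(\varepsilon)$. The only cosmetic difference is that the paper first dominates $f_\varepsilon$ by the indicator $\ind_\varepsilon$ of an enlarged region, so every piece is a (difference of) product indicators, whereas you work with $f_\varepsilon-\ind$ directly; for the non-product pieces this uses the multivariate form of the unfolding identity, which follows from the same torus-epimorphism argument.
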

\begin{proof}
It is easy to see that there exists $\vartheta_{i}(\varepsilon)> \vartheta_i$ such that $\vartheta_i(\varepsilon)= \vartheta_i + O(\varepsilon)$ and $f_{\varepsilon} \leq \ind_{\varepsilon}$, where $\ind_{\varepsilon}$ denotes the characteristic function of the set 
        \begin{equation}
            \{(\bar{x},\bar{y}) \in \R^{m+n}: 1- \varepsilon \leq \|\bar{y}\| \leq e +\varepsilon, \,\,|x_i| < \vartheta_{i}(\varepsilon) \|\bar{y}\|^{-w_i} \text{  for } i= 1,\ldots, m\}.
        \end{equation}
        Now we have
        \begin{equation*}
            |\hat{f_{\varepsilon}}(b^s \Lambda) - \hat{\ind}(b^s\Lambda)| = \sum_{v \in \Lambda \setminus \{0\}} (f_{\varepsilon}(b^s v)- \ind(b^s v)) \leq \sum_{v \in \Lambda \setminus \{0\}} (\ind_{\varepsilon}(b^s v)- \ind(b^s v)) .
        \end{equation*}
        It is easy to note that $\ind_{\varepsilon}- \ind$ is bounded by the sum $\ind_{1,\varepsilon} + \ind_{2,\varepsilon} + \ind_{3,\varepsilon}$ of the characteristic functions of the sets
        \begin{eqnarray*}
        &\{(\bar{x},\bar{y}) \in \R^{m+n} : 1-\varepsilon \leq \|\bar{y}\| \leq 1,\,\,  |x_i| < \vartheta_{i}(\varepsilon) \|\bar{y}\|^{-w_i} \text{  for } i= 1,\ldots, m\}, \\
        &\{(\bar{x},\bar{y}) \in \R^{m+n} : e \leq \|\bar{y}\| \leq  e+ \varepsilon,\,\,  |x_i| < \vartheta_{i}(\varepsilon) \|\bar{y}\|^{-w_i} \text{  for } i= 1,\ldots, m\}, \\
        &\{(\bar{x},\bar{y}) \in \R^{m+n} : 1 \leq \|\bar{y}\| \leq  e, \,\, |x_i| < \vartheta_{i}(\varepsilon) \|\bar{y}\|^{-w_i} \text{  for } i= 1,\ldots, m, |x_j|\geq \vartheta_j \|\bar{y}\|^{-w_j} \text{  for some } j\}
        \end{eqnarray*}
        respectively. Therefore, we have
        \begin{equation*}
            \hat{f}_{\varepsilon}(b^s \Lambda) - \hat{\ind}(b^s\Lambda) \leq \hat{\ind}_{1,\varepsilon}(b^s \Lambda) + \hat{\ind}_{2,\varepsilon}(b^s \Lambda) + \hat{\ind}_{3,\varepsilon}(b^s \Lambda).
        \end{equation*}
        Therefore, it remains to show that 
        \begin{equation*}
            \int_{\Y_{\xi}} (\hat{\ind}_{i, \varepsilon} \circ b^s) \, d\mu_{\Y_{\xi}} \ll \varepsilon + e^{-s} \quad \text{for } i=1,2,3.
        \end{equation*}
Let $\chi'_M$ denote the characteristic function of $[-M,M]$. Now we compute that 
\begin{align*}
\int_{\Y_{\xi}} (\hat{\ind}_{1, \varepsilon} \circ b^s) \, d\mu_{\Y_{\xi}} &= \underset{(1- \e)e^s \leq \|\bar{q}\| \leq e^s}{ \sum} \prod_{i=1}^{m}    \underset{{p_i} \in \Z}{ \sum} \int_{[0,1]^n}  \ind_{\vartheta_i(\e)\|\bar{q}\|^{-w_i}}'(p_i+ \langle \bar{\theta}_i, \bar{q} \rangle + \xi_i) \,\, d\bar{\theta_i}  \nonumber \\
            &\ll \underset{(1- \e)e^s \leq \|\bar{q}\| \leq e^s}{ \sum} \prod_{i=1}^{m}     \left(2\vartheta(\varepsilon)\|\bar{q}\|^{-w_i} (\max_{k}|q_k|)^{-1} \|\bar{q}\|\right)  \\
             &\ll \underset{(1- \e)e^s \leq \|\bar{q}\| \leq e^s}{ \sum}  \|\bar{q}\|^{-n} \nonumber \\
             &\ll e^{-ns} \, \#\{ \bar{q} \in \Z^n: (1-\varepsilon)e^s \leq \|\bar{q}\| \leq e^s \}.
         \end{align*}
         We can estimate the number of integral points in the region $\{ (1-\varepsilon)e^s \leq \|\bar{q}\| \leq e^s \}$ in terms of its volume. In other words, there exists $r>0$ (depending only on the norm) such that 
        \begin{equation*}
           \#\{ \bar{q} \in \Z^n: (1-\varepsilon)e^s \leq \|\bar{q}\| \leq e^s \} \ll  |\{ \bar{y} \in \R^n: (1-\varepsilon)e^s -r \leq \|\bar{y}\| \leq e^s +r \}|.
        \end{equation*}
        Therefore, finally we have 
        \begin{eqnarray*}
            \int_{\Y_{\xi}} (\hat{\ind}_{1, \varepsilon} \circ b^s) \, d\mu_{\Y_{\xi}} &\ll& e^{-ns} ((e^s +r)^n- ((1-\varepsilon)e^s-r)^n) \\
            &=& (1 +re^{-s})^n- ((1-\varepsilon)-re^{-s})^n \\
            &\ll& \varepsilon + e^{-s}.
        \end{eqnarray*}
        Note that the integral estimate of $\hat{\ind}_{2,\varepsilon} \circ b^s$ is similar to the above; hence, we omit the details.\\ 

       Similar calculations of the integral over $\hat{\ind}_{3,\varepsilon} \circ b^s$ gives
        \begin{align*}
           \int_{\Y_{\xi}} (\hat{\ind}_{3, \varepsilon} \circ b^s) \, d\mu_{\Y_{\xi}} &= \underset{e^s \leq \|\bar{q}\| \leq e^{s+1}}{ \sum}  2^m \left(\prod_{i=1}^m\vartheta_{i}(\varepsilon) - \prod_{i=1}^m \vartheta_i \right)   \|\bar{q}\|^{-n} \\
           &\ll \varepsilon \left(\underset{e^s \leq \|\bar{q}\| \leq e^{s+1}}{ \sum}  \|\bar{q}\|^{-n} \right) \\
           &\ll \varepsilon.
        \end{align*}
        This completes the proof of the lemma.
\end{proof}

\bibliographystyle{alpha}
\bibliography{biblio}

\end{document}